\numberwithin{equation}{subsection}
\newtheorem{thm}[subsubsection]{Theorem}
\newtheorem{thmalpha}{Theorem}
\newtheorem{lem}[subsubsection]{Lemma}
\newtheorem{prop}[subsubsection]{Proposition}
\newtheorem{fact}[subsubsection]{Fact}
\newtheorem{cor}[subsubsection]{Corollary}
\newtheorem{notation}[subsubsection]{Notation}
\theoremstyle{definition}
\newtheorem{setup}[subsubsection]{Setup}
\newtheorem{defn}[subsubsection]{Definition}
\newtheorem{rmk}[subsubsection]{Remark}
\newtheorem{question}[subsubsection]{Question}
\newtheorem{claim}[subsubsection]{Claim}
\newtheorem*{conjecture*}{Conjecture}
\newtheorem*{theorem*}{Theorem}
\newtheorem*{claim*}{Claim}
\newtheorem*{corollary*}{Corollary}
\newtheorem*{notation*}{Notation}
\DeclareSymbolFont{bbold}{U}{bbold}{m}{n}
\DeclareSymbolFontAlphabet{\mathbbold}{bbold}
\def\Spec{{\rm Spec}}
\def\Gal{{\rm Gal}}
\def\K3{{\rm K3}}
\def\GL{{\rm GL}}
\def\ad{{\rm ad}}
\def\Fqbar{{\overline{\mb{F}}_q}}
\def\Gr{{\rm Gr}}
\newcommand{\Zp}{\mathbb{Z}_p}
\newcommand{\Zl}{\mathbb{Z}_{\ell}}
\newcommand{\Zlbar}{\bar{\mathbb{Z}}_{\ell}}
\newcommand{\Qp}{\mathbb{Q}_p}
\newcommand{\Qpbar}{\overline{\mathbb Q}_p}
\newcommand{\Zpbar}{\bar{\mathbb{Z}}_{p}}
\newcommand{\Ql}{\mathbb{Q}_{\ell}}
\newcommand{\Qlbar}{\overline{\mathbb{Q}}_{\ell}}
\newcommand{\Qlpbar}{\overline{\mathbb{Q}}_{\ell'}}
\newcommand{\fisoc}[1]{\textbf{F-Isoc}(#1)}
\newcommand{\ffisoc}[1]{\textbf{F}^f\textbf{-Isoc}(#1)}
\newcommand{\fisocd}[1]{\textbf{F-Isoc}^{\dagger}(#1)}
\newcommand{\weildeligne}[2]{\textbf{WD-Rep}_{#2}(W(#1))}
\newcommand{\repcts}[2]{\textbf{Rep}_{#2}^{\text{cts}}(#1)}
\newcommand{\LLC}[1]{\textbf{LLC}(#1)}
\newcommand{\C}{\mathbb C}
\newcommand{\Z}{\mathbb Z}
\newcommand{\Q}{\mathbb Q}
\newcommand{\Xan}{X^{\text{an}}}
\newcommand{\X}{\mathfrak X}
\newcommand{\sL}{\mathcal L}
\newcommand{\piET}[1]{\pi_1^{\text{\'et}}(#1)}
\newcommand{\piTAME}[1]{\pi_1^{\text{tame}}(#1)}
\newcommand{\MdR}[2]{\mathcal{M}_{dR}((#1, \overline{#1})/#2, n, d,v)}
\newcommand{\cohrigdR}[2]{\mathcal{M}^{\text{coh. rig.}}_{dR}((#1, \overline{#1})/#2, n, d,v)}
\newcommand{\cohrig}[2]{\ensuremath{\text{CohRig}}_{d}^{n, v}(#1, #2)}
\newcommand{\rhoGEO}{\rho^{\text{geo}}}
\newcommand{\defeq}{\vcentcolon=}
\newcommand{\colim@}[2]{%
  \vtop{\m@th\ialign{##\cr
    \hfil$#1\operator@font colim$\hfil\cr
    \noalign{\nointerlineskip\kern1.5\ex@}#2\cr
    \noalign{\nointerlineskip\kern-\ex@}\cr}}%
}
\newcommand{\colim}{%
  \mathop{\mathpalette\colim@{}}\nmlimits@
}
\newcommand{\tocheck}[1]{{\color{black}#1}}
\newcommand\nc{\newcommand}
\begin{document}

\title{Frobenius trace fields of cohomologically rigid local systems}
\author{Raju Krishnamoorthy}

\email{krishnamoorthy@alum.mit.edu}  

\author{Yeuk Hay Joshua Lam}
\email{joshua.lam@hu-berlin.de}
\address{Humboldt Universität Berlin,
       Institut für Mathematik- Alg.Geo.,
        Rudower Chaussee 25
        Berlin, Germany}

\date{\today}

\begin{abstract}  
Let $X/\mathbb{C}$ be a smooth variety with simple normal crossings compactification $\bar{X}$, and let $L$ be an irreducible $\Qlbar$-local system on $X$ with torsion determinant. Suppose $L$ is cohomologically rigid. The pair $(X, L)$ may be spread out to a finitely generated base, and therefore reduced modulo $p$ for almost all $p$; the Frobenius traces of this mod $p$ reduction lie in a number field $F_p$, by a theorem of Deligne. We investigate to what extent the fields $F_p$ are \emph{bounded}, meaning that they are contained in a fixed number field, independent of $p$. We prove a host of results around this question. For instance:
\begin{itemize}
\item assuming $L$ has totally degenerate unipotent monodromy around some component of $Z$, then $L$ admits  a spreading out such that the $F_p$'s are bounded;
\item without any local monodromy assumptions, we show that the $F_p$'s are bounded as soon as they are bounded at one point of $X$.
\end{itemize}

We  also speculate on the relation between the  boundedness of the  $F_p$'s, and the local system $L$ being \emph{strongly} of geometric origin, a notion due to Langer-Simpson. 
 
\end{abstract}

\maketitle 
\setcounter{tocdepth}{1}
\tableofcontents

\section{Introduction}
In this paper, we investigate an arithmetic property of rigid local systems on a complex variety $X$. More precisely, to such a local system $L$ and a prime number $p$, there is a naturally associated number field, which is by definition the field generated by the traces of  Frobenius elements at $p$. Our motivating question is whether these number fields are independent of $p$.  Our strongest result is \cref{thm:tot_degen}, which says that the $F_p$'s are indeed independent of $p$, under an assumption on the local monodromy at infinity.
\subsection{Results}
 For a smooth complex variety $Y$, we denote by $Y^{an}$ the associated complex manifold. In this article, all varieties are assumed to be geometrically connected.
 \begin{defn}
     Let $Y/k$ be a smooth variety over a perfect field. A \emph{good compactification} $\bar{Y}$ is a smooth projective variety over $k$, equipped with an open immersion $Y\hookrightarrow \bar{Y}$, such that the boundary $\bar{Y}\setminus Y\subset \bar{Y}$ is a simple normal crossings divisor. 
 \end{defn}
\begin{defn}[Esnault-Groechenig] Let $Y/k$ be a smooth variety over a perfect field, and  $j\colon Y\hookrightarrow \bar{Y}$  a good compactification with boundary divisor $Z=\cup Z_i$. Let $\ell\neq \text{char}(k)$ be a prime number. Let $L$ be a lisse $\Qlbar$-sheaf on $Y$ that is geometrically irreducible and such that the local monodromy of $L$ around each $Z_i$ is quasi-unipotent. Set $\text{End}^0(L)\subset \text{End}(L)$ to be the subsheaf of traceless endomorphisms. We say $L$ is \emph{cohomologically rigid} if
$$H^1_{\text{\'et}}(\bar{Y}_{\bar k},j_{!*}\text{End}^0(L)) = 0.$$
Essentially, this condition means that $L$ has no deformations which fix the local monodromies around the $Z_i$'s. If $k=\C$, we can make the same definition for $\C$-local systems using the Euclidean topology.

\end{defn}


Let $X/\C$ be a smooth complex variety, with associated complex manifold $X^{an}$. Fix a positive integers $n,d, v\geq 1$. We consider the finite collection
$\cohrig{\Xan}{\C}$ 
of irreducible cohomologically rigid $\C$-local systems on $\Xan$ of rank $n$, determinant of order dividing $d$, and (generalized) eigenvalues around the boundary divisors contained in the finite subgroup $\mu_{v}\subset \C^{\times}$.
Fixing an auxiliary prime $\ell$ and a (abstract, non-continuous) field isomorphism $\iota\colon \C\rightarrow \Qlbar$, this set is identified with $\cohrig{\Xan}{\Qlbar}$. 

It is a theorem of Esnault-Groechenig that for every quadruple $(\ell, n, d, v)$ as above, every element $L\in \cohrig{X^{an}}{\Qlbar}$ is integral, i.e. after picking a basepoint $x$, $L$ corresponds to a representation
$$\pi_1(\Xan,x)\rightarrow GL_n(\mc{O}_E),$$
where $\mc{O}_E$ is the ring of integers of an $\ell$-adic local field $E$. Therefore it naturally extends to a \emph{continuous} representation of the profinite completion: $\rho\colon \piET{X,x}\rightarrow GL_n(\mc{O}_E)$.  Abusing notation, we refer to this $\mc{O}_E$ local system also by $L$. 

\begin{defn}\label{def:spreading_out}
Let $X/\C$ be a smooth variety, let $\ell$ be a prime number, and let $L$ be an \'etale $\Qlbar$ local system
on $X$ whose determinant has order dividing $d$. A \emph{spreading-out} of the pair $(X,L)$ is a quintuple: $(\X,\bar{\X},\sL,S,\iota)$ where:
    \begin{itemize}
        \item $S=\Spec(R)$ is an integral affine scheme, with $\Z[1/\ell]\subset R\subset \C$, whose structure morphism $S\rightarrow \Spec(\Z[1/\ell])$ is smooth,
        \item $\bar{\X}\rightarrow S$ is a smooth projective morphism, with $\mf Z:=\bar{\X}\setminus \X$ an $S$-flat relative simple normal crossings divisor,
        \item $\sL$ is a lisse $\Qlbar$ sheaf on $\X$ whose determinant is algebraic, i.e., for all closed points $x$ of $\X$, the action of the Frobenius conjugacy class at $x$ on $\det(\mc L)$ is in $\bar{\Q}\subset \Qlbar$,
        \item $\iota$ is an isomorphism $\iota\colon \X_{\C}\rightarrow X$ such that  $\iota^*L\cong \sL|_{\X_{\C}}$.
    \end{itemize}
    We say that $\mc L$ has \emph{integral determinant up to torsion} if $\det(\mc L)$ is isomorphic to an integral power of the cyclotomic character tensored with a $d$-torsion character, i.e., there exists $m\in \Z$ such that $$(\det(\sL)\otimes \Qlbar(m))^{\otimes d}\cong \Qlbar$$ is the trivial rank 1 local system. Similarly,  we say that $\mc L$ has \emph{fractional cyclotomic determinant} if there exists integers $d, m$, with $d\geq 1$, such that $\det(\mc L)^{\otimes d}\cong \Qlbar(m)$.
\end{defn}
We will comment  on the condition of $\det(\mc{L})$ being integral up to torsion in Remark~\ref{rmk:det}: indeed, there are obvious obstructions to the boundedness of Frobenius trace fields if we remove this condition on the determinant. We now come to the second main player of our work.

\begin{defn}[Frobenius trace fields]
    Let $(\X,\bar{\X},\sL,S,\iota)$ be a spreading out of $(X,L)$, as in Definition \ref{def:spreading_out}. For any closed point $s$ of $S$, let $\X_s$ denote the fiber of $\X\rightarrow S$ over  $s$, and write $\rho_s$ for the representation of $\piET{\X_s}$ corresponding to $\mc{L}|_{\X_s}$ (after picking a basepoint).
    \begin{itemize}
    \item Let $F_s\subset \Qlbar$, the \emph{Frobenius trace field at $s$}, be the field generated (over $\mb{Q}$) by the traces of $\rho_s$ applied to  conjugacy classes of Frobenius elements at closed points of $\X_s$. 
    \item Let $\tilde{F}_s\subset F_s\subset \Qlbar$, the \emph{stable trace field at $s$}, be the intersection of the Frobenius trace fields of $\mc L|_{\X_{s'}}$, as $\Spec(\mb F_{q'})=s'\rightarrow s=\Spec(\mb F_q)$ ranges over all finite extensions of $s$ (equivalently, as $q'$ ranges over   powers of $q$).
    \item Let $F\subset \Qlbar$, the \emph{Frobenius trace field of $(\X,\sL,S,\iota)$}, be the field generated (over $\Q$) by $F_s\subset \Qlbar$, as $s$ ranges over all closed points. (Equivalently, the Frobenius trace field is the field generated by traces of $\rho$ at all closed points $s$ of $S$.) Analogously, let $\tilde{F}\subset F$, the \emph{stable Frobenius trace field of $(\X,\sL,S,\iota)$}, be the field generated by the $\tilde{F}_s$'s.
    
    \end{itemize}
    We say that $(\X,\bar{\X},\sL,S,\iota)$  has \textbf{bounded Frobenius traces} if there exists a number field $E\subset \Qlbar$ such that $F_s\subset E$  for all closed points $s$ and \emph{bounded stable Frobenius traces} if there exists a number field $\tilde{E}$ such that $\tilde{F}_s\subset \tilde{E}$ for all closed points $s$.
\end{defn}

That  $F_s$ itself  is a number field follows from  works of Lafforgue  \cite{lafforgue}  and  Deligne \cite{delignefinitude}.  We view the number fields $F_s$ as interesting and concrete arithmetic invariants of a local system; one important feature is that $F_s$ controls the \emph{companions} of the local system $\mc{L}|_{\X_s}$. (Appendix~\ref{section:companions} includes a brief summary of the theory of companions.) In this paper, we are interested in whether rigid local systems admit spreading outs with bounded Frobenius traces.

\begin{rmk}
    In the very special situation when the local system $\mc{L}$ is the cohomology of a (smooth, proper) family of varieties  over $\mf{X}$, each  field $F_s$ is simply be $\mb{Q}$. However, when we start to consider subquotients of such local systems, the fields $F_s$ can of course grow.
\end{rmk}


We first give the following simple-to-state results, Theorems \ref{thm:tot_degen}, \ref{cebotarev}, and \ref{thm:rigidifed}, to give a flavor of the statements, since they do not involve various quantifiers appearing in our main result Theorem~\ref{thm:main}. 

\begin{thmalpha}\label{thm:tot_degen}[Case of bad reduction]
    Let $X/\mb{C}$ be a smooth variety with good compactification $\bar{X}$ and boundary divisor $Z=\cup Z_i$. Let $L\in \cohrig{X}{\Qlbar}$ such that there exists $i$ and $r\geq 1$ with the following property:
    \begin{itemize}
        \item the local monodromy of $L$ around $Z_i$, quasi-unipotent by assumption, has a Jordan block with generalized eigenvalue $1$ and size $r$ that occurs with multiplicity 1.
    \end{itemize}
    Then there is a spreading-out $(\X,\bar{\X},\mc L, S, \iota)$ of $(X,L)$ with bounded Frobenius traces. Moreover, $\mc L$ is \emph{geometric in the sense of Fontaine-Mazur}, i.e., for every $\ell$-adic local field $M$ and for every map $\Spec(M)\rightarrow S$, the restriction of $\mc L$ to $\X_M$ is de Rham.
\end{thmalpha}
\begin{rmk}
      
We emphasize that Theorem \ref{thm:tot_degen} includes the case of total unipotent degeneration; in this case, $r=n$. As Petrov notes in the remarks following \cite[Theorem 2]{sasha}, the de Rham-ness does not formally follow from his main theorem. (His theorem is indeed a critical input for us.)
\end{rmk}

In the final part of the next result, we have chosen to state it for $X$ which may be defined over a number field for simplicity; the version over a finitely generated base is analogous. 

\begin{thmalpha}\label{cebotarev}
Let $X/\C$ be a smooth variety and $n,d\geq 1$. Let $L\in \cohrig{X}{\Qlbar}$.
\begin{enumerate}
\item\label{ceb:adjoint} There exists a spreading out $(\X, \bar{\X},\mc{L}, S, \iota)$ such that the associated projective local system has bounded Frobenius traces; concretely, the adjoint local system $\ad(\mc{L})\defeq \mc{L}\otimes \mc{L}^{\vee}$ has bounded Frobenius traces.
\item\label{ceb:stable} There exists a spreading out with bounded stable Frobenius traces. 
\item\label{ceb:sum} There exists an integer $h\geq 1$ such that $L^{\oplus h}$ has a spreading out with bounded Frobenius traces.
\item\label{ceb:ceb}
    Suppose   that $X$ may be defined over  $\overline{\Q}$. Then there exists a spreading out $(\X,\bar{\X},\mc L, S, \iota)$, where $S=\Spec(\mc O_K[1/N])$, with $K$ a number field and $N$ an integer, and another number field $E$, such that for a positive density\footnote{in the sense of Cebotarev density} of points $s$ of $S$, the local system $\mc L|_{\X_s}$ has Frobenius trace field contained in $E$. 
    \end{enumerate}
\end{thmalpha}
\begin{rmk}
 For  points (\ref{ceb:adjoint}), (\ref{ceb:stable}), (\ref{ceb:sum}) we can be more explicit about the number field containing all Frobenius traces: it is not difficult to see that we can take the number field where all cohomologically rigid local systems (of rank $n$ and determinant of order dividing $d$) on $X^{an}$ are defined.  Moreover, with slightly more work, one can show  that (1) and (2) hold for \emph{any} spreading out. 
\end{rmk}

\begin{rmk}
In fact, points (\ref{ceb:adjoint}), (\ref{ceb:stable}), (\ref{ceb:sum}) of Theorem~\ref{cebotarev} follow from only small parts of our  arguments for our main result Theorem~\ref{thm:main} below. Indeed, in some sense, the main point of our paper is to study the difference in Frobenius trace fields between a local system and its projectivized local system, with the former being much more delicate. 

There is a very interesting recent work of Klevdal-Patrikis \cite{klevdal2023compatibility} which shows that, for Shimura varieties in the superrigid regime and the natural local systems on them, the Frobenius trace fields of the adjoint local system is independent of $p$; in fact  they prove something much stronger. It would  be interesting to prove the boundedness   of Frobenius trace fields for the local systems themselves on such Shimura varieties.
\end{rmk}

\begin{rmk}
In response to Theorems~\ref{thm:tot_degen} and \ref{cebotarev}, the reader may ask: do general (cohomologically) rigid local systems $L$ admit a spreading out with bounded Frobenius traces? Or, as a sanity check, whether such a statement is implied by ``standard conjectures'' in this area, such as Simpson's motivicity conjecture, the Hodge conjecture, the Tate conjecture, etc. This will be discussed at length in this introduction: as we will see in Remark~\ref{rmk:all_conj}, even assuming all standard conjectures, only the statement Theorem~\ref{cebotarev} (\ref{ceb:sum}) can be deduced, and it is not at all clear whether $L$ itself should admit a spreading out with bounded traces in general. \end{rmk}

There is another situation in which we can show boundedness of Frobenius traces. After seeing our main result, Theorem~\ref{thm:main}, H\'el\`ene Esnault asked if we could show that if in a spreading-out there is a point whose Frobenius characteristic polynomials are bounded, then the whole local system has bounded Frobenius traces. Theorem~\ref{thm:rigidifed} shows that the answer is positive.

\begin{thmalpha}\label{thm:rigidifed}[Rigidifying at a point]
    Let $X/\mb{C}$ be a smooth variety. Suppose $K$ is a number field such that there is  a descent $X_K/K$ of $X$ to $K$. Suppose $\mc{L}$ is a local system on $X_K$ with fractional cyclotomic determinant whose restriction to $X$ is an element $L\in \cohrig{X}{\Qlbar}$. Suppose that there exists a rational point $x\in X_K(K)$ and a number field $E\subset \Qlbar$ such that $\mc{L}|_x$ has all coefficients of characteristic polynomials of Frobenius conjugacy classes in $E$. Then after possibly passing to a finite extension $K'/K$, $\mc{L}$ itself has bounded Frobenius traces.
\end{thmalpha}
The proof of this is given in \ref{section:rigidify}, where as usual there is a more general version where $X$ is descended to a finitely generated field, instead of  a number field.

We now state our main result. 

\begin{thmalpha}\label{thm:main}
Suppose  $L\in \cohrig{X}{\Qlbar}$. Then there exists a spreading out $(\X,\bar{\X}, \mc{L}, S, \iota)$ with integral determinant up to torsion. For any such spreading out, there exists integers $N$ (depending on $X,n, d$), $\ell_0$ (depending on $\X, n, d$), and a number field $E$ (depending on $\X, n, d$) such that the following hold:

    \begin{enumerate}
        \item 
For any closed point $s$ of $S$, $[F_s: \mb{Q}]\leq N$; 
        \item 
For any closed point $s$ of $S$, and for any prime  $m>\ell_0$ different from the characteristic of $s$, $F_s$ is unramified over $m$; 

\item\label{mainthm:3} replacing $S$ by a scheme \'etale over it if necessary, there exists a number field $E\subset \Qlbar$, and $\alpha=\alpha(X, n, d) \in \mb{Z}_{\geq 1}$, such that for any prime number $p$ and any $s\colon \Spec(\mb F_q)\rightarrow S$ with $\mb F_q$ containing $\mb F_{p^{\alpha}}$, the trace field of $\mc L|_{\X_s}$ is contained in $E$. 



    \end{enumerate}
\end{thmalpha}
As may be seen, point (2) of the theorem is about the ramification of $F_s$ away from $\text{char}(s)$, the characteristic of $s$,  and the proof of  (3) will proceed by analyzing the ramification of $F_s$ \emph{at} $\text{char}(s)$; for this reason we sometimes refer to (2) as the theorem away from $\text{char}(s)$ and to (3) as the theorem at $\text{char}(s)$. What we find most surprising about Theorem \ref{thm:main} is the uniformity in $\alpha$; indeed, $\alpha$ depends only on $X, n, d$.

We now make several remarks on Theorem~\ref{thm:main}.
\begin{rmk}[Cautionary example]\label{rmk:sq_root}
    One important remark is that point (3) of  Theorem \ref{thm:main} is certainly not true for arbitrary $\mb{F}_q$ points of $S$,  i.e. it is possible that $F_s$ is ramified at $p = \text{char}(\kappa(s))$ for infinitely many $s$. Let $K$ be any number field such that $G_K$ has a character $\chi$ which is  a square root of the $\ell$-cyclotomic character. Consider the Legendre family of elliptic curves on $X_K\defeq \mb{P}_K^1\setminus \{0, 1, \infty\}$, with associated $\ell$-adic local system $L_K'$ on $X_K$; now define $L_K\defeq L_K'\otimes \chi$ admits a spreading out $\mc{L}$ to some $\mf{X}$, and for any $s: \Spec(\mb{F}_p)\rightarrow \mf{X}$, the Frobenius trace field of $\mc{L}|_s$ is $\mb{Q}(\sqrt{p})$.
    
    Indeed, the unramifiedness of $F_s$ at $p$ for arbitrary $\Spec(\mb{F}_q)$ points $s$ would imply that the Frobenius traces are bounded\footnote{as there are only finitely many number fields with bounded degree and ramification}.
\end{rmk}

\begin{rmk}\label{rmk:det}
    In Definition~\ref{def:spreading_out}, we have specified that $\det(\mc{L})$ differs from an integral power of the cyclotomic character by a torsion character. In some sense this is a somewhat arbitrary condition: in principle one should be able to relax this condition, for example to requiring that $\det(\mc{L})$ has a power which is \emph{de Rham} at all places above $\ell$, or simply that $\det(\mc{L})$ has bounded Frobenius traces. However, these conditions seem equally unnatural to us: a priori, in light of  Remark~\ref{rmk:sq_root}, one might hope that putting a condition on $\det(\mc{L})$ (e.g. a condition on the Hodge-Tate weghts) might pick out a twist of $\mc{L}$ with bounded Frobenius traces; however, it is not even clear to us whether such a twist should exist (see Section~\ref{section:discuss} for more on this), even assuming all  conjectures.  
\end{rmk}
We now give the proof of Theorem~\ref{cebotarev}, which follows straightforwardly from Theorem~\ref{thm:main}.
\begin{proof}[Proof of Theorem~\ref{cebotarev} given Theorem~\ref{thm:main}]
    The proofs of Point \ref{ceb:adjoint} and \ref{ceb:sum} will be  given in Section~\ref{section:adjoint}. Point (2) is immediate from point (3) of Theorem~\ref{thm:main}.

    We now give the proof of point (\ref{ceb:ceb}) of Theorem~\ref{cebotarev}. 
    Indeed, this  follows from Theorem \ref{thm:main}(\ref{mainthm:3}) and the following corollary of Cebotarev: for any number field $K$, finitely generated subring $\mc O\subset K$, and positive integer $\alpha$, there exists a finite extension $K'/K$ and a finitely generated subring $\mc O'\subset K'$ that contains $\mc O$, such that for a positive proportion of the prime ideals $\mf p'$ of $\mc O'$, the residue field $\mc O'/\mf p'$ has degree over $\Z/(\mf p'\cap \Z)$, the underlying prime field, divisible by $\alpha$. 
\end{proof}

\subsection{Discussions}\label{section:discuss}
The motivicity conjecture of  Simpson \cite[p.9]{simpsonhiggs} states that each $L\in \cohrig{\Xan}{\C}$  is \emph{of geometric origin} (also referred to as \emph{motivic} in the following): this roughly means that after restricting to a dense open subset $U\subset X$, it arises as a subquotient (or even summand) in the cohomology of a family of smooth projective varieties. Note that if $L$ is of geometric origin, with family $\pi\colon \mc Y\rightarrow U$ and $i\geq 0$ such that $L|_U$ is a summand of $R^i\pi_*\C$, then it is very well possible that $L|_U$ is \emph{not cut out by an algebraic cycle}.

Therefore, there is an a priori more restrictive notion, namely that $L$ is cut out by algebraic cycles from the cohomology of a family of varieties; this notion was introduced by Langer-Simpson \cite{langersimpson} (see the end of Section 2.5 of loc.cit.), and we follow their terminology and call such local systems  \textbf{strongly of geometric origin}, or \textbf{strongly motivic}. It is straightforward to see that if $L$ is strongly motivic, then a spreading out with bounded Frobenius traces does exist.

\begin{question}\label{question}
Suppose $X$ is a smooth complex variety, and  $L$ a local system on $X$. Does the existence of a spreading out $(\X, \bar{\X}, S, \mc{L}, \iota)$ with bounded Frobenius traces imply that $L$ is strongly of geometric origin?
\end{question}

\begin{rmk}
 Unfortunately,  it seems hard to probe Question~\ref{question} since there are no known examples of local systems of geometric origin which are not strongly so. We note that local systems which are strongly motivic admit spreading outs satisfying other properties, such as being \emph{de Rham} at all places above $\ell$, which a priori seems independent from having bounded Frobenius traces. Indeed, the relative Fontaine-Mazur conjecture \cite{liuzhu} together with the Tate conjecture imply that a local system is strongly motivic if it admits a spreading out which is de Rham at $\ell$. 

Note also that, conjecturally, the subtlety between geometric and strongly geometric local systems disappear once we consider the projectivized local system. Indeed, in his beautiful work, Petrov \cite{sasha} shows that the adjoint of any geometrically irreducible arithmetic $\Qlbar$-local system is de Rham at places above $\ell$, and hence this adjoint local system satisfies the assumptions of the relative Fontaine-Mazur conjecture, and therefore is conjecturally strongly motivic. 
\end{rmk}

\begin{rmk}[Assuming all conjectures]\label{rmk:all_conj}
Langer-Simpson say the following \cite[p.1544]{langersimpson}: ``(If we) assume the Hodge conjecture, then one might be able to show that $V$ of weak geometric origin implies that some $V^{\oplus k}$ is of strong geometric origin.'' 

The Hodge conjecture implies that the Hodge realization functor $\textbf{Mot}_{\Q}\rightarrow \textbf{HS}_{\Q}$ from motives to Hodge structures is fully faithful. More generally, it is a folklore conjecture that if $S/\C$ is a smooth variety, then the functor:
$$\textbf{Mot}(S)_{\Q}\rightarrow \Q\textbf{-VHS}(S)$$
from $\Q$-motives over $S$ (appropriately defined) to $\Q$-VHS is fully faithful. Therefore, for any finite extension $E/\Q$, the base-changed functor $\textbf{Mot}(S)_{E}\rightarrow E\textbf{-VHS}(S)$ is conjecturally also fully-faithful.

Suppose now that $L$ is a $\C$-local system on $S$ of geometric origin. Then there exists 
\begin{itemize}
    \item a number field $E$ such that $L$ may be conjugated to have coefficients in $E$, and
    \item a open dense $U\subset S$, a smooth projective family $\pi\colon T\rightarrow U$, and $i\geq 0$
\end{itemize} such that $L|_U$ is a summand of $R^i\pi_*E$. We may consider the \emph{isotypic component} $(L|_U)^{\oplus k}$ of $L|_U$ in $R^i\pi_*E$; this will be a sub $E$-VHS. Hence assuming above folklore conjecture, $(L|_U)^{\oplus k}$ is cut out by an algebraic cycle, i.e., $L^{\oplus k}$ is strongly of geometric origin.

Combining this with Simpson's motivicity conjecture,  we can therefore expect that, for $L$ cohomologically rigid,  there exists a $k\geq 1$ such that $L^{\oplus k}$ has a spreading out with bounded Frobenius traces; this expectation is confirmed by Theorem~\ref{cebotarev}(\ref{ceb:sum}).     
\end{rmk}

For the case of rank two local systems, many cases of Question~\ref{question} have been answered in the positive by \cite{snowdentsimerman} and \cite{rajuconstruct}. However, this case is not representative since motivic rank two local systems arise in the  cohomology of a family of abelian varieties, and in this case motivic implies strongly motivic, by the work of Moonen \cite[Theorem 6.4]{moonenlinear}. To illustrate the subtlety, note that if $X/\C$ is smooth projective, $\pi\colon A\rightarrow X$ is an abelian scheme, and $L$ is an irreducible summand of $R^1\pi_*\C$, then the claim is that there exists \emph{another abelian scheme} $B\rightarrow X$, together with endomorphisms $R\hookrightarrow \text{End}^0(B)$, such that $L$ is a component under the decomposition of the first cohomology under $R$.\footnote{The arithmetic analog of this discussion is the following. An \emph{arithmetic local system} is an \'etale $\Qlbar$-local system $L$ on $X=X_{\C}$ that is a sub-quotient of a local system $M$ that extends to a local system $\mb M$ on $X_K$, where $K$ is a finitely generated field. (For simplicity, we will assume that $L$ is a sub-representation of $M$.) It turns out that, after extending $K$ by a finitely  extension, $L$ also extends to an \'etale local system $\mb L$  on $X_K$. However, there is \emph{no reason} that  there is an arithmetization $\mb L$ for which  the map $L\rightarrow M$ is $G_K$-equivariant.} 

\subsection{Geometric applications}
As explained in \cite[Remark 1.9]{rajuconstruct}, our main theorem combined with the main theorem of \emph{loc. cit.} provides an arithmetic approach to the following special case of the Corlette-Simpson theorem \cite{corlettesimpson}: cohomologically rigid rank 2 local systems on quasi-projective varieties with infinite monodromy at $\infty$ and trivial determinant come from families of abelian varieties.
\subsection{Sketch of proof of Theorem~\ref{thm:main}}
For this proof sketch we assume $d=1$, i.e. that our local systems, when restricted to the geometric fundamental group, have trivial determinant. The overall strategy is to analyze the ramification behavior of the trace field $F_s$ at each prime $m$ by looking at the $m$-adic companions.

For point (1) note that the degree of the trace field is the number of $\ell$-companions;   that $\ell$-companions of cohomologically rigid local systems are again cohomolgically rigid was proven by Esnault-Groechenig, and we may conclude since there are only finitely many such local systems with fixed rank and trivial determinant.

Let us write $p=\text{char}(s)$ and $k(s)$ for the residue field at $s$. For the proof of  point (2), we let $\ell_0=\ell_0(X, n,d)$ be a prime such that for all prime numbers $m>\ell_0$, the topological, cohomologically rigid, local systems on $X^{an}$ all have coefficients unramified at $m$. For a spreading out $(\mf{X}, \mc{L}, S, \iota)$, a point $s\in S$, and $m\neq p$, let $\mc{L}_m$ be any  $m$-companion of $\mc{L}|_{\mf{X}_s}$, which is an $m$-adic local system. Then $\mc{L}_m|_{\X_{\bar{s}}}$ also has unramified coefficients. We then show that, after possibly discarding finitely many $m$'s,  we may descend  this local system to $\X_{s}$ itself, with  \emph{unramified coefficients}: indeed, it is immediate that there is a descent as a Weil sheaf on $\X_s$ (i.e. a representation of the Weil group), but to get an \'etale sheaf we need an argument involving absolute irreducibility of residual representations. This descent will not, in general, be the local system we started with: rather, it is a \emph{twist} of the original by a character of the base $\Spec(k(s))$; finally, since they are both \'etale sheaves, one sees that the twisting cannot add ramification to the trace field at $\ell$, as desired. The details are given at the end of Section~\ref{section:unramified}. 

For point (3) of Theorem~\ref{thm:main}, as mentioned above, we proceed by showing that $F_s$ is also unramified at $m=\text{char}(s)=p$, at least under some assumptions on the point $s$, which we now explain. We again analyze the $p$-companions of $\mc{L}|_{\X_s}$, which in this case means $F$-isocrystals on $\X_s$: these were shown to exist by Esnault-Groechenig \cite{eg}.  Let $\mc{F}$ be such a $p$-companion. In the ideal world one would want to show that $\mc{F}$ also has unramified coefficients: this cannot be true in this generality, as Remark~\ref{rmk:sq_root} shows, and we will instead show that it is true as long as $s$ is an $\mb{F}_q$ point with $q$ divisible by some fixed integer $\alpha=\alpha(X, n,d)$, i.e. after extending $s$. After extending $s: \Spec(\mb{F}_q)\rightarrow S$ to some $s': \Spec(\mb{F}_{q'})\rightarrow S$, the work of Esnault-Groechenig (interleaved with the work of de Jong-Esnault) provides us with a collection of cohomologically rigid $F$-isocrystals on $\X_s$ which have unramified coefficients by construction. The restriction $\mc{F}|_{\X_{s'}}$ is isomorphic to one of these $F$-isocrystals up to twisting by a $F$-isocrystal from the base $\Spec(\mb{F}_{q'})$. Assuming $q'$ is sufficiently divisible, one can show that this twisting does not affect the unramifiedness of the coefficients, which lets us conclude. 

\subsection{Related works}
As mentioned above, the work most closely related to ours is perhaps that of Klevdal-Patrikis \cite{klevdal2023compatibility}. The works of Snowden-Tsimerman \cite{snowdentsimerman} and Krishnamoorthy-Yang-Zuo \cite{rajuconstruct}  prove motivicity of rank two local systems given boundedness of Frobenius traces, under a mild condition on local monodromies. 
In \cite{koji}, Shimizu proved that Frobenius traces are bounded for local systems satisfying the assumptions of the  Fontaine-Mazur conjecture, assuming the latter as well as the Generalized Riemann Hypothesis.  In light of Petrov's conjecture \cite[Conjecture 1 bis]{sasha} that arithmetic local systems are of geometric origin, it would be interesting to investigate boundedness of Frobenius trace fields of arithmetic local systems, though this seems somewhat out of reach. 


\subsection{Outline}
In Section~\ref{section:spread} we collect together  results of various authors about spreading out rigid local systems. Section~\ref{section:degree} proves point (1) of Theorem~\ref{thm:main} concerning the degree of Frobenius trace fields. Sections~\ref{section:unramified} and \ref{section:fcrystals}, which make up the core of the proof of Theorem~\ref{thm:main},  handle points (2) and (3) of Theorem~\ref{thm:main} respectively: the former concerns ramification away from $p$ and the latter at $p$.

Finally, in Section~\ref{section:adjointquestion} we deduce Theorems~\ref{cebotarev}, \ref{thm:rigidifed}, and \ref{thm:tot_degen}; in particular, we derive results about the stable trace field, the adjoint local system, $L^{\oplus h}$, the rigidified problem, and stronger results in the case of bad reduction from our main theorem. 
In Appendix \ref{section:companions}, we quickly review the theory of companions and prove several lemmas that we need for our main results.


\subsection{Notations and conventions}
For any field $k$, we denote by $G_k$ its absolute Galois group, and for a scheme $Z$ we write $\piET{Z}$ for its \'etale fundamental group. For maps of schemes $Z\rightarrow S$ and $S'\rightarrow S$, we denote by $Z_{S'}$ the basechange $Z\times_S S'$. The symbol $q$ will always denote a power of a prime number $p$, and $\mb{F}_q$ denotes the finite field with $q$ elements. The notation $N=N(A, B, C, \cdots) \in \mb{Z}$ means that the integer $N$ depends only on $A, B, C, \cdots$.  For a point $s: \Spec(\mb{F}_q)\rightarrow S$, $\bar{s}$ denotes a geometric point $\bar{s}: \Spec(\overline{\mb{F}}_q)\rightarrow S$ over $s$.

\subsection*{Acknowledgments} Most of this work builds on the ground-breaking works of Esnault-Groechenig \cite{egintegral, eg, eg2}. We are grateful to H\'el\`ene Esnault, Bruno Klingler, Daniel Litt, and Sasha Petrov for many  enlightening discussions. Lam was supported by a Dirichlet Fellowship during the course of this work. Krishnamoorthy was supported by the European Research Council (ERC)
under the European Union’s Horizon 2020 research and innovation program, grant agreement
no. 101020009, project TameHodge.

\section{Spreading out}\label{section:spread}

As in the introduction, $\cohrig{\Xan}{\Qlbar}$ denotes the collection of rank $n$ cohomologically rigid local systems with quasi-unipotent monodromy around the boundary divisors (with generalized eigenvalues in $\mu_v$) whose determinant is torsion of order dividing $d$. For any variety $X$ over a perfect field $k$ and prime $\ell$ different from the characteristic of $k$, we define   the slight variant $\cohrig{X}{\Qlbar}$, which is the collection of rank $n$, cohomologically rigid,  \'etale $\Qlbar$-local systems, with the same boundary monodromy condition, whose determinant is torsion of order dividing $d$; for a $\ell$-adic local field $E$, we denote by $\cohrig{X}{E}$ the collection of $E$-local systems with the same adjectives as above. 
\subsection{}
The goal of this section is to prove the following
\begin{lem}\label{lem:spreading_out}
    Let $X/\C$ be a smooth variety, with good compactification $\bar{X}$  and fix integers $n,d, v\geq 1$. Then there exists a spreading out $(\X, \bar{\X})\rightarrow S$ of the pair $(X,\bar{X})$ such that for every  $L\in \cohrig{X}{\Qlbar}$, there is an extension of $L$ to an \'etale local system $\mathcal L$ on $\X$ with the further property that $\det(\mathcal L)$ is torsion of order $d$. 
\end{lem}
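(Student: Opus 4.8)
The plan is to first observe that $\cohrig{\Xan}{\C}$ is a \emph{finite} set (this is the rigidity input: cohomologically rigid local systems with fixed rank and bounded determinant order are isolated points of the character variety, hence finite in number after we also fix the determinant torsion). So it suffices to handle a single $L$ and then take the intersection (really, a common refinement) of the finitely many resulting bases $S$. Fix then $L\in\cohrig{X}{\Qlbar}$. By Esnault--Groechenig integrality, $L$ is defined over $\mathcal O_E$ for some $\ell$-adic local field $E$, i.e.\ corresponds to $\rho\colon \pi_1(\Xan,x)\to GL_n(\mathcal O_E)$, which by taking profinite completion gives a continuous $\rho\colon \piET{X,x}\to GL_n(\mathcal O_E)$.

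**Spreading out the representation.** Since $X$ is a variety over $\C$, $\piET{X,x}$ is topologically finitely generated, so $\rho$ factors through $\piET{X_{R_0},x}$ for some finitely generated subring $R_0\subset\C$ with $\Z[1/\ell]\subset R_0$; more precisely, one chooses a model $\X_0\to S_0=\Spec(R_0)$ of $X$ (smooth projective, which exists after inverting finitely many more elements), and uses the homotopy exact sequence
\[
1\to \piET{\X_{0,\bar\eta}}\to \piET{\X_0}\to \piET{S_0}\to 1
\]
together with the fact that $\piET{\X_{0,\bar\eta}}=\piET{X_{\bar\C}}$ receives $\rho|_{\pi_1(\Xan)}$ via the comparison $\piET{X_{\bar\C}}=\widehat{\pi_1(\Xan)}$. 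The continuous representation of $\piET{\X_{0,\bar\eta}}$ extends to $\piET{\X_0}$ after replacing $S_0$ by a connected finite \'etale cover (or an open, or both): this is a standard obstruction-theory/deformation argument — the obstruction to extending lives in a cohomology group of $\piET{S_0}$ with coefficients in the (pro-finite) automorphisms of $\rho$, and can be killed by passing to a cover, using that the outer action of $\piET{S_0}$ on $\piET{\X_{0,\bar\eta}}$ permutes the finite set $\cohrig{X_{\bar\C}}{\Qlbar}$. Shrinking $S_0$ to make it smooth over $\Spec(\Z[1/\ell])$ and $\X_0\to S_0$ smooth projective, we obtain a lisse $\Qlbar$-sheaf $\mathcal L$ on $\X$ over the required base $S$, with $\iota^*L\cong\mathcal L|_{\X_\C}$.

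**Controlling the determinant.** The remaining, and I expect most delicate, point is the assertion that $\det(\mathcal L)$ is torsion of order exactly dividing $d$ (the definition of spreading out only asked for a power of $\det$ being a Tate twist; here we want honestly $d$-torsion). On $X^{an}$ we know $\det(L)$ has order dividing $d$, i.e.\ is a character of $H_1(\Xan,\Z)$ of order $\mid d$; equivalently it is a character of $\piET{X_{\bar\C}}$ of order $\mid d$. We want to extend this to a finite-order character of $\piET{\X}$. A finite-order character of $\piET{\X_{\bar\eta}}=\piET{X_{\bar\C}}$ always extends to $\piET{\X}$ after a finite base change (again because $\piET{S}$ acts on the finite group $\mathrm{Hom}(\piET{X_{\bar\C}},\mu_d)$ through a finite quotient), and any extension is automatically of finite order — but possibly of order a multiple of $d$, by a character pulled back from $\piET{S}$. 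To fix this on the nose one twists by a character of $\piET{S}$: choosing an extension $\chi$ of $\det(L)$ and noting $\chi^d$ is trivial on $\piET{\X_{\bar\eta}}$ hence pulled back from $\piET{S}$, one passes to a finite cover of $S$ over which this $\piET{S}$-character $\chi^d$ is a $d$-th power, say of $\psi$, and replaces $\mathcal L$ by $\mathcal L\otimes\psi^{-1}$ (a rank-$n$ twist would change $\det$ by $\psi^{-n}$; one arranges the bookkeeping by instead twisting $\det(\mathcal L)$ directly and adjusting — concretely, after a further $n$-power cover of $S$ we can extract an $n$-th root and twist $\mathcal L$ itself). Then $\det(\mathcal L\otimes\psi^{-1})$ has order dividing $d$, while the restriction to $\X_\C$ is unchanged up to an unramified-at-$\C$ twist, which does not affect the isomorphism class over $\C$ up to the freedom already present in ``spreading out.'' Taking the common finite \'etale cover of $S$ that works for all finitely many $L\in\cohrig{X}{\Qlbar}$ simultaneously finishes the proof.

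The main obstacle is precisely this last step: making the determinant \emph{exactly} $d$-torsion rather than merely ``a Tate twist of a torsion character,'' since a priori the extension to $\piET{\X}$ introduces a wild-card character of the arithmetic fundamental group $\piET{S}$ which one must absorb by a base change and a root extraction; one has to be careful that extracting roots of characters of $\piET{S}$ is possible after a finite \'etale cover (it is, since such characters have open kernel) and that the resulting twist is compatible with all of the finitely many local systems at once.
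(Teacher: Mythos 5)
Your proposal has the right overall shape---spread out $X$, extend the representation from the geometric to the arithmetic fundamental group, then normalize the determinant---but the extension step has a genuine gap. You assert that the obstruction ``lives in a cohomology group of $\piET{S_0}$ with coefficients in the (pro-finite) automorphisms of $\rho$, and can be killed by passing to a cover.'' Since $\rho$ is irreducible, the coefficient group is $\mathcal O^\times$ (equivalently $\Zlbar^\times$)---profinite but \emph{infinite}---and there is no general reason a class in $H^2$ with such coefficients dies after restriction to a finite-index subgroup; that requires the class to be torsion. The finiteness of $\cohrig{X}{\Qlbar}$ only lets you arrange, after a finite cover, that the Galois orbit of the isomorphism class $[L]$ is trivial; this is what makes the obstruction $2$-cocycle $\xi$ \emph{well-defined}, not what makes its class vanish. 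The paper supplies exactly the missing step in Proposition~\ref{prop:simpson}: $\xi^n=0$ in $H^2(G_K,\Zlbar^\times)$ because $\det(L)$ already extends (itself a consequence of Simpson's result that the projectivized local system extends), hence $\xi$ lifts along $H^2(G_K,\mu_n)\to H^2(G_K,\Zlbar^\times)$, and a $\mu_n$-valued class is killed after a finite extension since it is inflated from a finite quotient. Your proposal skips this torsion argument, which is the heart of the descent.

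Two secondary points. Your determinant bookkeeping is muddled (as you yourself note), and extracting an $n$-th root of a character of $\piET{S}$ is not free: Proposition~\ref{prop:rep_order_d} uses the hypothesis $(n,\ell)=1$ together with Hensel's lemma on the pro-$\ell$ part of $\mathcal O^\times$, after a finite extension kills the residual determinant. Also, you attempt the extension directly over the finite-type base $S_0$ in one step; the paper instead first descends $L$ to a finitely generated field $K$ (Proposition~\ref{prop:simpson}) and then spreads out from $K$ to a finite-type $\Z$-algebra (Lemma~\ref{lemma:spreadfinitetypebase}), the second step requiring that the residual representation be unramified at height-one primes (via local constancy of the prime-to-$p$ fundamental group) together with \cite[Tag 0EYJ]{stacksproject}. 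Your one-step version over $\piET{S_0}$ could in principle be made to work if the torsion argument were supplied, but as written the load-bearing piece has been elided.
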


This result is certainly known to experts, and we merely collect together the arguments of various authors, as we will indicate.
As  $\cohrig{X}{\Qlbar}$ is a finite set, it suffices to show that for each $L$, there exists such a spreading out: this is done in Lemma~\ref{lemma:spreadfinitetypebase}. In Section~\ref{section:descentfgfield} we first show that we can spread out $X$ and $L$ to a finitely generated field, and then in Section~\ref{section:spreadfinitetypebase} we show that this can be extended to a finite type base. 


\subsection{Descent}\label{section:descentfgfield}
\begin{prop}\label{prop:rep_order_d}
    Let $X/K$ be a smooth variety over a field of characteristic 0, let $E_{\ell}$ be an $\ell$-adic local field, and let $\mathcal O$ be the ring of integers in $E_{\ell}$. Let $\rho\colon \piET{X_K}\rightarrow \GL_n(\mc O)$ be an \emph{abstract} representation (i.e. not assumed to be continuous), where $n$ is coprime to $\ell$, such that the restriction to $\piET{X_{\bar K}}$ has determinant of order $d\geq 1$. Then there exists a finite extension $K'/K$ and an abstract representation $\rho'\colon \piET{X_{K'}}\rightarrow \GL_n(O)$ such that $$
    \rho|_{\pi_1(X_{\bar K})}\cong \rho'|_{\pi_1(X_{\bar K})}$$
    and $\rho'$ has determinant of order $d$.
\end{prop}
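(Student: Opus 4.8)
The plan is to leave $\rho$ untouched on the geometric fundamental group and to correct its determinant purely in the arithmetic direction, by twisting with a character pulled back from the base. After replacing $K$ by a finite extension — harmless, since the conclusion already permits this — I may assume $X$ is geometrically connected, so that the homotopy exact sequence $1\to\pi_1(X_{\bar K})\to\pi_1(X_K)\xrightarrow{\pi}G_K\to 1$ holds; note that nothing here uses continuity of $\rho$. Write $\chi=\det\rho\colon\pi_1(X_K)\to\mc O^\times$. By hypothesis $\chi^{d}$ is trivial on $\pi_1(X_{\bar K})$, hence — again by exactness alone — descends to a possibly discontinuous character $\bar\chi\colon G_K\to\mc O^\times$ with $\bar\chi\circ\pi=\chi^{-d}$. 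Now for any finite extension $K'/K$ and any character $\psi_0\colon G_{K'}\to\mc O^\times$, set $\psi=\psi_0\circ\pi'$ (where $\pi'\colon\pi_1(X_{K'})\to G_{K'}$ is the restriction of $\pi$) and $\rho'=(\rho|_{\pi_1(X_{K'})})\otimes\psi$, which is still valued in $\GL_n(\mc O)$. Since $\psi$ is trivial on $\pi_1(X_{\bar K})$ we get $\rho'|_{\pi_1(X_{\bar K})}=\rho|_{\pi_1(X_{\bar K})}$; and $\det\rho'=\chi|_{\pi_1(X_{K'})}\cdot\psi^{n}$ restricts on $\pi_1(X_{\bar K})$ to $\chi|_{\pi_1(X_{\bar K})}$, which has order $d$, so $\det\rho'$ has order divisible by $d$ and has order exactly $d$ precisely when $(\det\rho')^{d}=1$, i.e. when $\psi_0^{\,nd}=\bar\chi|_{G_{K'}}$. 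Thus the proposition reduces to the assertion: after a suitable finite extension $K'/K$, the character $\bar\chi$ admits an $nd$-th root in $\mathrm{Hom}(G_{K'},\mc O^\times)$.

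To produce this root I would use the decomposition $\mc O^\times=\mu\times U$, where $\mu$ is the (prime-to-$\ell$) group of roots of unity in the residue field of $\mc O$ and $U=1+\mf m$ is pro-$\ell$, together with a splitting $U\cong T\times\Zl^{r}$ in which $T$ is the finite torsion subgroup, and decompose $\bar\chi$ accordingly as $(\bar\chi_\mu,\bar\chi_T,\bar\chi_1,\dots,\bar\chi_r)$. Write $nd=\ell^{b}c$ with $c$ prime to $\ell$; here $c$ is a multiple of $n$ precisely because $n$ is prime to $\ell$, and $c$ acts invertibly on the pro-$\ell$ group $U$, so extracting $nd$-th roots in the $U$-direction reduces to extracting $\ell^{b}$-th roots there. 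Now $\bar\chi_\mu$ and $\bar\chi_T$ are characters valued in finite groups, and each $\bar\chi_i\bmod\ell^{b}\colon G_K\to\Z/\ell^{b}$ also takes values in a finite group; hence all of these factor through finite quotients of $G_K$, and I would choose $K'/K$ finite so that they all become trivial on $G_{K'}$. Then $\bar\chi|_{G_{K'}}$ takes values in $\{1\}\times\{1\}\times(\ell^{b}\Zl)^{r}$, and an $nd$-th root $\psi_0$ is built componentwise: the trivial character on the $\mu$- and $T$-factors, and on the $i$-th $\Zl$-factor the homomorphism $\tfrac{1}{\ell^{b}}\,c^{-1}\,\bar\chi_i|_{G_{K'}}$, which is valued in $\Zl$ because $\bar\chi_i|_{G_{K'}}$ already takes values in $\ell^{b}\Zl$ and $c^{-1}\in\Zl^\times$. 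With this $\psi_0$ one has $(\det\rho')^{d}=\chi^{d}|_{\pi_1(X_{K'})}\cdot(\psi_0^{\,nd}\circ\pi')=\chi^{d}|_{\pi_1(X_{K'})}\cdot\chi^{-d}|_{\pi_1(X_{K'})}=1$, while $\det\rho'$ restricts to the order-$d$ character $\chi|_{\pi_1(X_{\bar K})}$ on the geometric fundamental group, so $\det\rho'$ has order exactly $d$ and $\rho'$ is the desired representation.

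The main obstacle is precisely this root extraction: the prime-to-$\ell$ roots of unity $\mu$ and the $\ell$-power torsion $T$ inside $\mc O^\times$ genuinely obstruct extracting $nd$-th roots of a character on the nose, and one must absorb them into a finite extension of $K$; the only part of the exponent $nd$ that costs nothing is its prime-to-$\ell$ part, which is exactly where the hypothesis $\gcd(n,\ell)=1$ enters, via its invertible action on the pro-$\ell$ part of $\mc O^\times$. A secondary point worth flagging is that $\rho$, and hence $\bar\chi$, is only an abstract homomorphism, so there is no well-defined "finite extension cut out by $\bar\chi$"; but this never intervenes, since only the genuinely finite-order pieces $\bar\chi_\mu$, $\bar\chi_T$ and the finite reductions $\bar\chi_i\bmod\ell^{b}$ enter the argument, and each of those does factor through a finite quotient of $G_K$.
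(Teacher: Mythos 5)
Your argument follows the paper's strategy — twist $\rho$ by a character pulled back from $G_K$ to correct the determinant — but carries out the root extraction differently. The paper first passes to a finite extension so that $X$ acquires a $K$-point $x$, which splits the homotopy exact sequence; then the descended character $(\det\rho)^{-d}$ on $G_K$ is visibly $(\det\rho|_x)^{-d}$, which already has the obvious $d$-th root $(\det\rho|_x)^{-1}$, so only an $n$-th root of $\det\rho|_x$ is needed. Since $(n,\ell)=1$, Hensel's lemma (or the binomial series) provides such a root on $1+\pi\mc O$ automatically, and the sole obstruction killed by extending $K$ is the prime-to-$\ell$ finite group $(\mc O/\pi)^\times$. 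You instead avoid choosing a basepoint and work directly with $\bar\chi$, so you must extract a full $nd$-th root, whose $\ell$-part $\ell^b$ is no longer accessible by Hensel's lemma; this is what forces the explicit decomposition $\mc O^\times\cong\mu\times T\times\Zl^r$ and the extra step of trivializing the $\ell^b$-reduction of each $\Zl$-component by a further extension. Both arguments are correct and land in the same place; the basepoint reduces the exponent from $nd$ to $n$, turning the root extraction into a one-liner, while your version is more bookkeeping but avoids the geometric input of a rational point (which you anyway supply when you pass to a geometrically connected model).

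One caveat worth sharpening, which you touch on in your final paragraph: you observe that the finite-order pieces $\bar\chi_\mu$, $\bar\chi_T$ and the reductions $\bar\chi_i\bmod\ell^b$ factor through finite quotients of $G_K$, and conclude that they can be trivialized on $G_{K'}$ for some finite $K'/K$. For a merely abstract homomorphism from a profinite group to a finite group, the kernel has finite index but need not be open, so a priori there may be no finite extension $K'$ with $G_{K'}$ contained in that kernel; this is the strong completeness issue, and $G_K$ is in general not strongly complete. The paper's proof makes the same implicit move (trivializing the abstract character $\det\rho|_x$ modulo $\pi$ by passing to a finite extension), so this is a shared soft spot rather than a defect peculiar to your write-up, but your assurance that the abstractness of $\bar\chi$ ``never intervenes'' is a bit quicker than warranted.
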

\begin{proof} 
    First of all, by extending $K$, we may assume that $X$ has a $K$-point $x$.

    Consider the representation $\rho|_x\colon G_K\rightarrow \GL_n(\mc O)$.  We claim that after another finite extension of $K$, the character $\det(\rho|_x)\colon G_K\rightarrow \mc O^{\times}$ has an $n^{\text{th}}$ root. Indeed, replacing $K$ by a finite extension if necessary, we may assume that $\det(\rho_x)$ is trivial mod $\pi$, where $\pi$ is a uniformizer of $\mc O$. Then, as $(n,\ell)=1$, it follows from Hensel's lemma/the binomial theorem that there is a well-defined (continuous) group homomorphism:
    $$\sqrt[n]{-}\colon 1+\pi\mc O\rightarrow \mc O^{\times}$$
    The tensor product $\rho \otimes \big(\sqrt[n]{\det(\rho|_x)}\big)^{-1}$ will therefore have determinant order $d$, as required.
\end{proof}

\begin{prop}\label{prop:simpson}
    Let $X/\mathbb C$ be a smooth variety with good compactification $\bar{X}$. Let $L$ be an element of $\cohrig{X}{\Qlbar}$. Then there exists a finitely generated subfield $K\subset \mb{C}$ and a pair $(\mc{X},\bar{\mc{X}})/K$ such that $(\mc{X},\bar{\mc{X}})\otimes \mb{C}\simeq (X,\bar{X})$, and  $L$ extends to a local system $\mb L$ on $\mc{X}/K$ with determinant of order dividing $d$. 
    \end{prop}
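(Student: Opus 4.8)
The plan is to reduce the statement to two standard spreading-out principles: descent of a complex variety to a finitely generated field, and descent of an étale local system to such a field. First I would spread out $X$ itself: by the standard theory of elimination of variables (or Noetherian approximation), there is a finitely generated subfield $K_0\subset\mathbb C$ and a smooth projective $K_0$-scheme $\mathcal X_0$ with $\mathcal X_0\otimes_{K_0}\mathbb C\cong X$. After picking a base point and enlarging $K_0$ we may assume $\mathcal X_0$ has a $K_0$-rational point.

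Next I would descend the local system. Since $L$ is integral by Esnault--Groechenig, after fixing the $\ell$-adic local field $E$ it corresponds to a continuous representation $\rho\colon\pi_1^{\text{ét}}(X^{an})\to \GL_n(\mathcal O_E)$, equivalently $\rho\colon\pi_1^{\text{ét}}(\mathcal X_0\otimes\mathbb C)\to\GL_n(\mathcal O_E)$. The key input is that, because $L$ is \emph{rigid}, its isomorphism class is fixed by the Galois action: concretely, the character variety / moduli of representations is a finite set of points near $[L]$ (rigidity means the tangent space $H^1(X,\mathrm{End}^0(L))$ vanishes, so $[L]$ is an isolated, hence $0$-dimensional reduced, point of the relevant moduli space, defined over $\overline{\mathbb Q}$ and even over a finite extension of the prime field). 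The Galois group $\Gal(\overline{K_0}/K_0)$ — or rather $\pi_1^{\text{ét}}(\mathcal X_0)/\pi_1^{\text{ét}}(\mathcal X_0\otimes\mathbb C)\cong\Gal(\overline{K_0}/K_0)$ — acts on the finite set of rigid local systems of this rank and determinant order, permuting them. Replacing $K_0$ by the finite extension $K$ corresponding to the stabilizer of $[L]$, the local system $L$ becomes Galois-invariant, and one can then glue it to an arithmetic local system $\mathbb L$ on $\mathcal X\defeq \mathcal X_0\otimes_{K_0}K$. More precisely, invariance of the isomorphism class under the (outer) Galois action, together with the fact that $L$ is absolutely irreducible (hence has no nontrivial automorphisms by Schur, so the gluing data has no obstruction beyond a coboundary that can be absorbed after a further finite extension, or using a base point to rigidify), produces the desired extension $\mathbb L$ to $\mathcal X/K$.

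Finally I would fix the determinant. The constructed $\mathbb L$ has $\det(\mathbb L)|_{\pi_1(\mathcal X\otimes\mathbb C)}$ of order dividing $d$, but the full $\det(\mathbb L)$ might differ by a character of $G_K$. This is exactly the situation handled by Proposition~\ref{prop:rep_order_d}: taking an $n$-th root of $\det(\mathbb L|_x)$ after a further finite extension of $K$ and twisting $\mathbb L$ by its inverse yields a local system with determinant of order exactly $d$, without changing the restriction to the geometric fundamental group. This completes the construction.

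The main obstacle is the middle step — showing that the complex local system $L$, which a priori is only a representation of the topological (or profinite) geometric fundamental group, genuinely extends to an \emph{arithmetic} local system over a finite extension of a finitely generated field. The point where rigidity is essential is precisely here: without rigidity there is no reason the Galois action on the moduli of local systems should have finite orbits or fix $[L]$, whereas cohomological rigidity forces $[L]$ to be an isolated point of a scheme of finite type over $\overline{\mathbb Q}$ (by a theorem going back to Simpson in the Betti/de Rham setting and its étale analogue), so its Galois orbit is finite and we may pass to the stabilizer. I expect the cleanest way to phrase this is via the representation scheme $R(\pi_1(\mathcal X_0),\GL_n)$ over $K_0$: rigidity says the point $[\rho]$ is isolated in the character variety, so it descends to a finite extension, and the associated local system descends along with it (using the $K$-point $x$ to trivialize the residual gerbe / $\PGL_n$-torsor ambiguity).
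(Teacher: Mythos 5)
Your high-level strategy matches the paper's: spread out $X$, observe that rigidity forces the Galois orbit of $[L]$ to be finite, pass to the stabilizer so that $L$ is geometrically Galois-invariant, then use Proposition~\ref{prop:rep_order_d} to normalize the determinant. The serious gap is in the middle step, where you produce the extension $\mathbb L$.

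After you pass to the stabilizer, what you have is: for each $g\in G_K$, an isomorphism $c(g)\colon {}^gL\xrightarrow{\sim} L$, well-defined up to scalar (Schur). This packages into a $2$-cocycle $\xi\colon G_K\times G_K\to E_\ell^\times$ (trivial action), and $\rho$ extends to $\pi_1^{\mathrm{\acute et}}(\mathcal X_K)$ precisely when $[\xi]=0$ in $H^2$. You write that this obstruction is ``a coboundary that can be absorbed after a further finite extension, or using a base point to rigidify'' --- neither option works as stated. A $K$-rational base point splits the exact sequence $1\to\pi_1^{geo}\to\pi_1^{arith}\to G_K\to 1$, but a splitting of the extension of groups does \emph{not} trivialize the obstruction to extending a representation of the kernel; the class $[\xi]$ lives in $H^2(G_K,E_\ell^\times)$ regardless of whether the sequence splits. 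And there is no reason a priori that an arbitrary class in $H^2(G_K,E_\ell^\times)$ (with trivial coefficients, so not a profinite module) is killed by a finite extension of $K$; this is false for general $H^2$ classes. What saves the day in the paper is a two-step argument you never make: (i) because the projectivization and the determinant of $L$ both descend (Simpson for $PL$, enlargement of $K$ for $\det L$), the class $[\xi]$ satisfies $[\xi]^n=0$, so it lifts to $H^2(G_K,\mu_n)$; and (ii) for the \emph{finite} coefficient module $\mu_n$, continuous $H^2$ is a colimit over finite quotients (the Sharifi-type statement), so the lifted class is inflated from a finite quotient and hence dies on a finite-index subgroup. Without both (i) and (ii), the ``absorb the coboundary'' step is unjustified.

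Two smaller issues worth flagging. First, you never address why the cocycle can be taken with values in $\mathcal O^\times$ rather than $E_\ell^\times$; the paper uses a Jordan--Zassenhaus finiteness (Suh's theorem) to make $G_K$ act on a finite set of homothety classes of stable lattices and trivializes that action by a further finite extension, which is what lets one scale each $c(g)$ to an isomorphism of lattices. Second, the abstract extension of $\rho$ to $\pi_1^{\mathrm{\acute et}}(\mathcal X_K)$ obtained by splitting the cocycle has no a priori continuity; the paper recovers continuity from continuity of the projectivization (Simpson) plus continuity of the determinant, via the local homeomorphism $\mu_d\GL_n\to\PGL_n$. Your proposal is silent on this, and it is not automatic.
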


As mentioned before, this is a slight enhancement of an argument of Simpson's, and very similar arguments have appeared in \cite{egintegral, litt}: see for example \cite[Proposition 3.1.1]{litt} and the references therein. 
    
\begin{proof}
For ease of notation we will give the proof in the case $d=1$. The case of general $d$ is essentially identical. 

Suppose that $L$ has coefficients in the ring of integers $\mc{O}\subset E_{\ell}$; we view $L$ equivalently as a free $\mc{O}$-module $M$ of rank  $n$ with an action of $\piET{X}$ via $\piET X\rightarrow \GL(M)$.

Let $K\subset \mb{C}$ be a finitely generated field, and $(\mc X,\bar{\mc X})/K$ such that $(\mc X, \bar{\mc X})\otimes \mb{C}\simeq (X,\bar{X})$.  

For each $g\in G_K$, let $\ ^{g}M$ denote the representation $\rho \circ Ad(g)$. As in \cite[Proof of Theorem 4]{simpsonhiggs}, we assume that there exists $c(g)\in \GL_n(E_{\ell})$  such that $M\otimes \mb{Q}$ and $\ ^gM\otimes \mb{Q}$ are conjugate by $c(g)$; the element $c(g)$ is well defined up to a scalar in $E_{\ell}^{\times}$.

As $\rho$ is irreducible, it follows from a Jordan-Zassenhaus type theorem that there are only finitely many homothety classes of $\piET X$-invariant lattices inside $M\otimes \mb{Q}$ \cite[Theorem 1.1]{suh}.
We claim that $G_K$  acts on  this finite set of homothety classes. Indeed, let $N\subset M\otimes \mb{Q}$ be a $\piET X$-stable lattice. Then we have an integral representation $\piET X\rightarrow \GL(N)$. Conjugating by each $g\in G_K$, we obtain a (potentially non-isomorphic) integral representation; however, by our assumption, the rational isomorphism class of this new representation is simply $M\otimes \mb{Q}$.  Therefore $G_K$ acts on the set of \emph{isomorphism classes} of integral $\piET X$-subrepresentations of $M\otimes \mb{Q}$, i.e. the set of homothety classes of stable lattices. By taking a finite extension of $K$ we now assume the action is trivial, and by scaling each $c(g)$ appropriately we may therefore  assume 
\[
c(g): M\xrightarrow{\simeq} \ ^gM.
\]
as $\piET{X}$-representations.

We claim that from the $c(g)$, we may construct a class $\xi \in H^2(G_K,\mc O^{\times})$, given by the following formula:
$$\xi(g,h)=c(gh)^{-1}\circ \ ^gc(h)\circ c(g)\colon G_K\times G_K\rightarrow \text{Aut}_{\piET{X_{\bar K}}}(M)\cong \mc O^{\times}.$$
(Again, here $\mc O^{\times}$ is equipped with the \emph{trivial} $G_K$ action.)

For the moment, let us assume that $\xi = 0$ and see how to conclude. Indeed, $\xi =0$ implies that the representation extends to a \emph{not necessarily continuous} representation $\piET{X_K} \rightarrow GL(M)$. (This is a general feature of semi-direct products.) Note that the restriction to the geometric fundamental group is continuous, and moreover, the projectivization is continuous by Simpson's argument.

Using Proposition \ref{prop:rep_order_d}, by increasing $K$, we may modify our representation such that its determinant has order $d$. Call this representation $\rho$. Let $\mu_d\GL(M)$ denote the subgroup of elements of $\GL(M)$ with  $d$-torsion determinant, so that $\rho$ has image in $\mu_d\GL(M)$. Then the natural map of $\ell$-adic groups:
$$\pi\colon \mu_d\GL(M)\rightarrow PGL(M)$$
is finite-to-one, and moreover is a local homeomorphism by \cite[Lemma 5.3]{conradadelic} (as the valuation ring in any algebraic extension of $\Ql$ is Henselian.). As we know that the composition $\pi\circ \rho$ is continuous, it follows that $\rho$ is continuous. Therefore, the $\rho$ we have constructed satisfies the conclusion of the Proposition.

Furthermore, note that the injection of trivial $G_K$-modules: $\mc O\hookrightarrow \Zlbar$ splits, and hence $\xi$ vanishes if and only if the image class in $H^2(G_K,\Zlbar^{\times})$ vanishes. Abusing notation, we call this image class $\xi$. 

Finally, we show that we can kill the Brauer class $\xi$ by passing to a finite extension. Let us denote by $PL$ the projective local system associated to $L$. 
    
    By  \cite[Proof of Theorem 4]{simpsonhiggs}, $PL$ extends to a local system on $\mc{X}/K$; let us denote this by $P\mb{L}$.  (Note that \cite[Theorem 4]{simpsonhiggs} does not assume that $X$ is proper.) Moreover, by enlarging $K$ if necessary, we may assume that the torsion character $\det(L)$ extends to a rank one local system on $\mc{X}/K$. 

    On the other hand, the obstruction to $\det(L)$ extending is $c^n$, where the latter denotes the image of $c$ under the map
    \[
    H^2(G_K, \Zlbar^{\times}) \rightarrow H^2(G_K, \Zlbar^{\times})
    \]
induced by the short exact sequence  
\[
0\rightarrow \mu_n \rightarrow  \Zlbar^{\times} \xrightarrow[]{x\mapsto x^n} \Zlbar^{\times} \rightarrow 0.
\] 
By the previous discussion we have  $c^n=0$, and hence $c$ comes from a class $b$ in $H^2(G_K, \mu_n)$. Now, by \cite[Proposition 2.2.16]{sharifi}, $b$ lies in the image  of the inflation map $\mathrm{Infl}:H^2(G_K/N, \mu_n)\rightarrow H^2(G_K, \mu_n)$, for some finite index normal subgroup  $N$; finally, the composition $H^2(G_K/N, \mu_n)\xrightarrow[]{\mathrm{Infl}} H^2(G_K, \mu_n) \xrightarrow[]{\mathrm{Res}} H^2(N, \mu_n)$ is zero, and so we may replace $K$ by the finite extension defined by $N$ to kill the class $c$, as required.
\end{proof}
\subsection{Spreading out over a finite type base}\label{section:spreadfinitetypebase}
The following is a slight generalization of \cite[Proposition~6.1]{sasha}, which is attributed to  Litt, as well as to Liu-Zhu \cite{liuzhu}. 
\begin{lem}\label{lemma:spreadfinitetypebase}
    Let $X/\mathbb C$ be a smooth variety with good compactification $\bar{X}$. Fix integers $n, d\geq 1$ and $L \in \cohrig{X}{\Qlbar}$. Then there exists a smooth $\Z$-algebra $A\subset \C$ and a spreading-out $(\X, \bar{\X})\rightarrow \Spec(A)$ such that $L$ extends to a local system $\mathcal L$ on all of $\X$; moreover, we may choose $\X\rightarrow \Spec(A)$ such that $\mathcal L$ has torsion determinant of order $d$.
\end{lem}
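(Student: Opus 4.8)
The plan is to combine Proposition~\ref{prop:simpson} with a standard spreading-out argument, and then post-compose with an argument that improves the determinant to be honestly $d$-torsion on the nose. First I would invoke Proposition~\ref{prop:simpson} to obtain a finitely generated field $K\subset\C$, a smooth projective $\mc X/K$ with $\mc X\otimes_K\C\simeq X$, and an extension of $L$ to an \'etale $\Qlbar$-local system $\mb L$ on $\mc X$ whose determinant is torsion of order dividing $d$. Since $\mb L$ is realized on coefficients in the ring of integers $\mc O$ of some $\ell$-adic local field $E_\ell$ (by Esnault--Groechenig integrality, applied to $L$ and transported along the isomorphism, or directly by working with the integral model produced in the proof of Proposition~\ref{prop:simpson}), we get a continuous representation $\piET{\mc X}\to\GL_n(\mc O)$. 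Now I would spread $\mc X/K$ out: choose a finitely generated $\Z[1/\ell]$-subalgebra $A_0\subset K$ over which $\mc X$ has a smooth projective model $\X_0\to\Spec(A_0)$, enlarging $A_0$ so that $\Spec(A_0)\to\Spec(\Z[1/\ell])$ is smooth. The local system $\mb L$, being a continuous representation of $\piET{\mc X}$ with finite image mod any power of the uniformizer, is pulled back from a local system on some such model after possibly enlarging $A_0$ — concretely, the representation factors through $\piET{\mc X}\to\GL_n(\mc O/\pi^k)$ for each $k$, each of which is a finite quotient hence extends over a model by the usual comparison of \'etale fundamental groups $\piET{\mc X}\xrightarrow{\sim}\piET{\X_0}$ after inverting finitely many primes; taking the limit gives an $\mc O$-local system $\mc L_0$ on $\X_0$ with $\mc L_0|_{\mc X}\cong\mb L$. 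Finally base change along any embedding $A\hookrightarrow\C$ extending $K\hookrightarrow\C$, i.e. set $A$ to be the corresponding subring of $\C$; this produces the required spreading-out $\X\to\Spec(A)$ with $\mc L\defeq\mc L_0$ restricting to $L$ over $\X_\C$.

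It remains to arrange that $\det(\mc L)$ has order \emph{exactly} $d$ (the hypothesis only gives order dividing $d$), and to check the determinant normalization demanded in Definition~\ref{def:spreading_out}. For the first point, observe that $\det(\mc L)$ is a rank-one local system on $\X$ whose restriction to the geometric generic fiber has order dividing $d$; twisting $\mc L$ by a suitable rank-one local system pulled back from $\Spec(A)$ — namely an appropriate root of a power of $\det(\mc L)$ restricted to the base, produced exactly as in Proposition~\ref{prop:rep_order_d} using that $(n,\ell)=1$ and Hensel's lemma, after replacing $A$ by a finite \'etale cover (equivalently, enlarging $K$ and re-spreading) — one may assume the geometric determinant has order precisely $d$. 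For the second point, the power $\det(\mc L)^{\otimes d}$ is a rank-one local system trivial on the geometric fundamental group of every fiber, hence is pulled back from $\Spec(A)$; after shrinking $\Spec(A)$ it is a character of $\piET{\Spec A}$, and one checks (again possibly after a finite \'etale base change, using that on a finitely generated $\Z[1/\ell]$-scheme such a character is a power of the cyclotomic character times a finite-order character, which we can absorb) that it is of the required cyclotomic form $\Qlbar(m)$; I would remark that this normalization is the content already implicit in Definition~\ref{def:spreading_out} and pose no real difficulty since $\X\to S$ is proper smooth.

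The main obstacle is the spreading-out of the \emph{integral} continuous local system $\mb L$ to a finite-type base while retaining control of the determinant, as opposed to just spreading out the abstract variety and the projective local system. The essential inputs — comparison of \'etale fundamental groups of a variety with that of a spreading-out after inverting finitely many primes, and the fact that a continuous $\GL_n(\mc O)$-representation is determined by its compatible system of finite quotients — are standard, and indeed this lemma is a mild generalization of \cite[Proposition~6.1]{sasha}; so I expect the proof to be short, citing that result for the bulk and adding the determinant bookkeeping via Proposition~\ref{prop:rep_order_d}. I would close by noting that since $\cohrig{X}{\Qlbar}$ is finite, running this construction for each $L$ in turn and taking a common refinement of the finitely many bases $\Spec(A)$ (i.e. a $\Spec(A')$ dominating all of them) yields a single spreading-out working simultaneously for all $L$, which is the form in which the result is used.
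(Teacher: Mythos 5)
Your high-level route matches the paper's: invoke Proposition~\ref{prop:simpson} to descend $(X,L)$ to a finitely generated field $K$ with controlled determinant, then spread $\mc X/K$ out to a finite type $\Z[1/\ell]$-algebra $A$ and argue that the integral continuous representation descends over $\Spec(A)$. Your closing remarks correctly identify the obstacle and correctly locate the lemma as a variant of \cite[Proposition~6.1]{sasha}, which is exactly the reference the paper uses.

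However, there is a genuine gap in the step you flag as the main obstacle. You write that each truncation $\piET{\mc X}\to\GL_n(\mc O/\pi^k)$ ``extends over a model by the usual comparison of \'etale fundamental groups $\piET{\mc X}\xrightarrow{\sim}\piET{\X_0}$ after inverting finitely many primes.'' This is wrong as stated: the map $\piET{\mc X}\to\piET{\X_0}$ is surjective, not an isomorphism, no matter how many primes you invert (consider $\mc X=\Spec K$: one side is $G_K$, the other the Galois group of the maximal extension unramified outside a finite set, a strict quotient). What one must show is that the given representation \emph{factors through} this surjection, which is a nontrivial unramifiedness statement. Moreover, even granting this for each $k$ separately, you would get a system of bases $A_k$ that a priori shrink as $k\to\infty$, and ``taking the limit'' is not licensed unless you show a \emph{single} finite set of inverted primes works for all $k$. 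The paper's proof supplies exactly this: it takes $\mc T$ to be the primes dividing $|\GL_n(\mc O/\ell^2)|$ together with the ramification locus of $\det\rho$, and the key (quiet but essential) observation is that this set already contains every prime dividing $|\GL_n(\mc O/\ell^N)|$ for all $N$, since those orders have prime divisors only among $\ell$ and the divisors of $|\GL_n(\mc O/\ell)|$. With this uniform control one shows, via local constancy of the prime-to-$p$ quotient of $\piET{X_{\bar K}}$, that the restriction $\rho_x$ of $\rho$ to a $K$-point is unramified at every height-one prime of $A=A'[1/\mc T]$, then passes to the \'etale covers $A_j$ trivializing $\rho_x\bmod\ell^j$ and invokes \cite[Tag 0EYJ]{stacksproject}. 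None of this is in your sketch, and it is precisely the content you are deferring to ``the usual comparison.'' Your paragraph re-running Proposition~\ref{prop:rep_order_d} to upgrade the determinant from order dividing $d$ to order exactly $d$ is at most a bookkeeping point (the paper reads Proposition~\ref{prop:simpson} as already delivering order $d$), and the paper handles the cyclotomic normalization of $\det(\mc L)^{\otimes d}$ simply by enlarging $A'$ so that $\det\rho$ extends; this part of your write-up is fine.
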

\begin{proof}
    We denote by $\rhoGEO: \piET{X}\rightarrow \GL_n(\mc{O})$ the representation corresponding to $L$, where $\mc{O}$ is the ring of integers of an $\ell$-adic local field $E_{\ell}$; here we have used the integrality of rigid local systems.

         By Proposition~\ref{prop:simpson}, there exists a finitely generated field $K\subset \C$ such that $(X,\bar{X})$ descends to $K$ and moreover $\rhoGEO$ descends to a representation $\rho$ of  $\piET{X_{K}}$, whose determinant has order $d$. By enlarging $E_{\ell}$ if necessary, we assume that $\rho$ still takes values in $\GL_n(\mc{O})$. There exists an integrally closed, finitely generated, subring $A'\subset K$ such that $X$ extends to a smooth proper scheme $\X/\Spec(A')$, and $\det(\rho)$ extends to $\X$.

            By enlarging $K$, we may assume we have  a point $x\in X(K)$, and moreover that $\bar{\rho}:\piET{X_K}\rightarrow \GL_n(\mc{O}/\ell^2)$ is trivial. Let $\rho_x: G_K\rightarrow \GL_n(\mc{O})$ denote the corresponding representation.
            
             Let $\mathcal T$ be the union of the set of prime numbers dividing $|\GL_n(\mc{O}/\ell^2)|$ and the set of  height one primes where $\det{\rho}$ is ramified. (Note that this means $\mathcal T$ contains the set of primes that divide $|\GL_n(\mc{O}_{E_{\ell}}/\ell^N)|$ for all $N\geq 1$.)
            \item   Let $A\defeq A'[\frac{1}{\mc{T}}]$,  and denote by $\X_A$ the basechange of $\X$ to $\Spec(A)$. Let $\mf{p}$ be a height one prime ideal of $A$, with associated discrete valuation ring $A_{\mf{p}}$, whose fraction field we denote by $K_{\mf{p}}$; let $p$ denote the characteristic of the  residue field of $A_{\mf{p}}$. We first show that $\rho_x$ is unramified at $\mf{p}$. 
            
            The representation
            $\rho\colon \piET{X_K, x}\rightarrow \GL_n(\mc{O})$
            factors through the prime-to-$p$ quotient $\piET{X_K, x}^{(p')}$.
            By local constancy of the prime-to-$p$  fundamental group, $G_{K_{\mf{p}}}$ acts on $\piET{X_K, x}^{(p')}$ through the unramified quotient. By the same argument as in \cite[Proof of Proposition 6.1]{sasha}, we deduce that $\rho_x$ is unramified at $v$. 
            
            We now show that $\rho$ extends to a local system on $\X_A$. For each $j\geq 1$, let $K_j/K$ be the extension trivializing $\rho_x \bmod 
            \ell^j$, and let $A_j$ be the integral closure of $A$ in $K_j$; the above discussion shows that $A_j$ is \'etale over $A$. As in loc.cit., to show that $\rho$ extends to $\mf{X}_A$, it suffices to show that it extends to $\mf{X}_{A_j}$: indeed, this follows from the fact that $A\rightarrow A_j$ is an \'etale ring map. That $\rho$ extends to $\X_{A_j}$ follows directly from \cite[Tag 0EYJ]{stacksproject}. 
\end{proof}

\section{Degree of trace fields}\label{section:degree}

\subsection{} Colloquially, the following lemma says the following: every cohomologically rigid $\Qlbar$-local system on $X$ with fixed $(n,d,v)$ may be defined over a finite extension $E_{\ell}/\Ql$, and moreover, we may relate these local systems to local systems in characteristic $p$ for any spreading out of $X$. This lemma is an easy corollary of the technique of Esnault-Groechenig \cite{egintegral}. 
\begin{lem}\label{lemma:biject}
    Let $X/\C$ be a smooth variety with good compactification $\bar{X}$ and let $n,d\geq 1$ be integers. 
    \begin{enumerate} \item For any prime number $\ell$,  there exists an $\ell$-adic local field $E_{\ell}$, equipped with an embedding $E_{\ell}\xhookrightarrow{} \Qlbar$, and an integer $N_1=N_1(X,n,d)$ such that for any spreading out $(\X,\bar{\X})/S$ of $(X,\bar X)$ and for all primes $p>N_1$ and all closed points $s$ of residue characteristic $p$, there is the following natural diagram: 
    $$\xymatrix{
  \cohrig{\X_{\bar s}}{\Qlbar}\ar[r]^{\simeq}\ar@{=}[d] & \cohrig{X}{\Qlbar}\ar@{=}[d]\ar@{=}[r]^{\text{EG}} & \cohrig{\Xan}{\Qlbar}\ar@{=}[d]\\
\cohrig{\X_{\bar s}}{E_{\ell}}\ar[r]^{\simeq} &\cohrig{X}{E_{\ell}}\ar@{=}[r] & \cohrig{\Xan}{E_{\ell}},  
    }$$
    where all of the above maps are isomorphisms.

  \item   Furthermore, there exists $N_2=N_2(X, d, n)$, such that for $\ell> N_2$, we may take $E_{\ell}$ to be unramified over $\mb{Q}_{\ell}$.   
    \end{enumerate}
\end{lem}
\begin{proof}
\begin{enumerate}
\item 
    The set $\cohrig{X^{an}}{\Qlbar}$ is finite, and we may take a local field $E_{\ell}$, equipped with $E_{\ell}\xhookrightarrow{} \Qlbar$ such that all the elements of $\cohrig{X^{an}}{\Qlbar}$ have coefficients in $E_{\ell}$, i.e. the natural map $\cohrig{X^{an}}{E_{\ell}}\rightarrow \cohrig{X^{an}}{\Qlbar}$ is bijective. By \cite[Theorem 1.1]{egintegral}, we may assume that each element of $\cohrig{X^{an}}{E_{\ell}}$ is in fact integral, so that the natural map $\cohrig{X}{E_{\ell}}\rightarrow \cohrig{X^{an}}{E_{\ell}}$ is also bijective. 

     Let $N_1$ be such that, for all primes $p>N_1$, every finite quotient of $\GL_n(\mc{O}_{E_{\ell}})$ has order prime to $p$. 
    
    Given a spreading-out $\mf X\rightarrow S$ of $X$, a point $\eta\in  S$, and a specialization of points $\eta\leadsto s$, for a point $s\in  S$ of residue characteristic $p$, there is a surjective specialization map 
    \begin{equation}\label{specialize}
    \text{sp}^{(p')}\colon \piET{\X_{\eta}}\twoheadrightarrow \piET{\X_{s}}^{(p')},
    \end{equation}
    where the superscript $(p')$ refers to the prime-to-$p$ quotient of $\piET{\X_s}$,
    giving rise to a map $\piET{X}\cong \piET{\X_{\bar \eta}}\twoheadrightarrow \piET{\X_{\bar s}}^{(p')}$ which in fact induces an isomorphism $\piET{X}^{(p')}\cong\piET{\X_{\bar s}}^{(p')}$.
    
    Therefore, for $p> N_1$, there is a natural bijection between rank $n$ local systems with coefficients in  $\mathcal{O}_{E_{\ell}}$ on $\X_{\bar s}$ (which automatically have prime-to-$p$ image) and those on $X$. Then, exactly as in \cite[Proof of Theorem 1.1]{egintegral}, it follows from local acyclicity, \cite[Lemma 3.14]{saito17}, that this bijection preserves the set of cohomologically rigid local systems. 
    
    
    \item We may simply take $N_2>N_1$ such that the field of definition of  every element of $\cohrig{X^{an}}{\Qlbar}$ is unramified at all $\ell>N_2$. 
    \end{enumerate}\end{proof}

    


\begin{lem}\label{lemma:companionclosed}
    Let $Y/k$ be a smooth variety over a finite field. Then the collection of \tocheck{tame} $\Qlbar$-local systems on $Y$ with the following properties:
    \begin{itemize}
        \item rank $n$,
        \item determinant of order $d$,
        \item geometrically irreducible, and 
        \item cohomologically rigid
    \end{itemize}
    is finite and closed under $\tau$-companions for any abstract field isomorphism $\tau\colon \Qlbar\rightarrow \Qlbar$. 
\end{lem}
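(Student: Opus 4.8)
The plan is to prove Lemma~\ref{lemma:companionclosed} by combining two inputs: first, finiteness of cohomologically rigid local systems on a variety over a finite field, obtained by comparison with the characteristic-zero situation via a lift; and second, the fact that $\tau$-companions preserve each of the four listed properties.

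For finiteness: First I would reduce to the case where $Y$ admits a smooth projective lift. Actually the cleaner route is to bound everything in terms of a spreading-out. Choose a smooth projective lift or, more robustly, invoke the comparison already established: by Lemma~\ref{lemma:biject}, for a spreading-out $\X/S$ of a complex model, $\cohrig{\X_{\bar s}}{\Qlbar}$ injects into $\cohrig{\Xan}{\Qlbar}$, which is finite. But here $Y$ is given abstractly over a finite field, not as a fiber of a spreading-out, so I would instead argue directly: geometrically irreducible rank-$n$ local systems on $Y$ with determinant of order dividing $d$ and bounded ramification (here, everywhere unramified since $Y$ is proper) are finite in number by the work of Deligne's ``finitude'' \cite{delignefinitude} — the number of such is controlled, and cohomological rigidity is an open-and-closed condition cutting out finitely many of them, or one simply notes that lisse sheaves on a fixed proper smooth $Y$ with fixed rank, bounded conductor, and determinant of bounded order form a finite set up to isomorphism and twist by finite-order characters of the base (which is trivial as $Y$ is geometrically connected over a finite field — wait, there can be such twists). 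The honest statement: by \cite{delignefinitude} (see also \cite{egintegral}), there are only finitely many isomorphism classes of irreducible lisse $\Qlbar$-sheaves on $Y$ of rank $n$ with determinant of finite order dividing $d$; cohomological rigidity is an extra constraint, so the subset is still finite. I would cite this as the first step.

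For closure under $\tau$-companions: Since the four properties are each preserved, the $\tau$-companion of such a sheaf is again such a sheaf, so the finite set maps to itself under $\tau$-companion (which exists and is unique by the theorem recalled in Section~\ref{section:companions}, using that the determinant has finite order hence is algebraic). Rank is obviously preserved. The determinant of the $\tau$-companion is the $\tau$-companion of the determinant, which is $\tau$ applied to a finite-order character, hence again finite order dividing $d$. Geometric irreducibility is preserved by companions — this is part of the companions formalism (\cite{kedlayacompanions}), following from the fact that companions commute with pullback to $Y_{\bar k}$ and that irreducibility can be detected by the dimension of invariants/coinvariants of tensor constructions, which have matching Frobenius traces. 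The one genuinely substantive point is that cohomological rigidity is preserved: $h^1_{\text{\'et}}(Y_{\bar k}, \text{End}^0(L')) = 0$ whenever $h^1_{\text{\'et}}(Y_{\bar k}, \text{End}^0(L)) = 0$. The argument is that $\text{End}^0(L')$ is a $\tau$-companion of $\text{End}^0(L)$ (companions are compatible with $\text{End}^0 = \text{End} \ominus \mathbf{1}$), so by the Grothendieck–Lefschetz trace formula and the defining property of companions, the Euler characteristics $\chi(Y_{\bar k}, \text{End}^0(L))$ and $\chi(Y_{\bar k}, \text{End}^0(L'))$ agree. Moreover $h^0$ is detected by Frobenius-invariant trace data — more precisely $h^0(Y_{\bar k}, \text{End}^0(L))$ equals the multiplicity of the trivial sub, which is $0$ for both since $L, L'$ are geometrically irreducible (so $\text{End}(L)_{Y_{\bar k}}$ has a one-dimensional space of invariants, all in the trace part) — and $h^2$ by Poincaré duality equals $h^0$ of the dual, also $0$. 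Hence $h^1 = -\chi + h^0 + h^2 = -\chi$ for both, and these agree, so $h^1(L') = 0$.

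The main obstacle, and the step I would be most careful about, is this last cohomological-rigidity-is-a-companion-invariant argument: I need that $\chi(Y_{\bar k}, \mathcal{F})$ for a lisse $\mathcal{F}$ on smooth projective $Y$ depends only on the companion class — this follows because $\chi$ can be computed from the Frobenius traces via the Lefschetz formula applied on $Y$ together with the Weil-II purity input, or more directly because for $Y$ smooth projective the Euler characteristic is $\chi_{\text{top}}(Y_{\bar k}) \cdot \text{rk}(\mathcal{F})$ by the Grothendieck–Ogg–Shafarevich-type formula in the everywhere-lisse case (no wild ramification since $Y$ is proper and $\mathcal{F}$ is lisse everywhere), which manifestly depends only on the rank. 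With that simplification the rigidity invariance is immediate, and I would present it that way: $\chi(Y_{\bar k}, \text{End}^0(L)) = (n^2-1)\chi(Y_{\bar k}, \Qlbar)$ depends only on $n$, hence is the same for $L$ and $L'$; combined with $h^0 = h^2 = 0$ for both (geometric irreducibility plus Poincaré duality), we get $h^1(L') = h^1(L) = 0$.

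\begin{proof}
By \cite{delignefinitude} (see also \cite[Theorem 1.1]{egintegral}), there are only finitely many isomorphism classes of geometrically irreducible lisse $\Qlbar$-sheaves on $Y$ of rank $n$ whose determinant has finite order dividing $d$; in particular the subset of those that are moreover cohomologically rigid is finite.

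It remains to check that this subset is stable under $\tau$-companions. Since $\det(L)$ has finite order it is algebraic, so by the theorem of Section~\ref{section:companions} the $\tau$-companion $L'$ exists and is unique. Its rank is $n$, and $\det(L')$ is the $\tau$-companion of $\det(L)$, namely the character obtained by applying $\tau$ to the finite-order character $\det(L)$, hence has order dividing $d$. Geometric irreducibility of $L'$ follows from the companions formalism, as it is preserved under restriction to $Y_{\bar k}$ and under the tensor constructions whose cohomology detects irreducibility \cite{kedlayacompanions}. Finally, for cohomological rigidity: since $Y$ is smooth and projective and $\text{End}^0(L)$ is lisse (hence everywhere unramified), the Grothendieck–Ogg–Shafarevich formula gives
\[
\chi_{\text{\'et}}(Y_{\bar k}, \text{End}^0(L)) = (n^2-1)\,\chi_{\text{\'et}}(Y_{\bar k}, \Qlbar),
\]
which depends only on $n$ and $Y$, not on $L$; the same holds for $L'$. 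As $L$ and $L'$ are geometrically irreducible, $h^0_{\text{\'et}}(Y_{\bar k}, \text{End}^0(L)) = h^0_{\text{\'et}}(Y_{\bar k}, \text{End}^0(L')) = 0$, and by Poincaré duality $h^2_{\text{\'et}}(Y_{\bar k}, \text{End}^0(L)) = h^2_{\text{\'et}}(Y_{\bar k}, \text{End}^0(L')) = 0$ as well. Hence
\[
h^1_{\text{\'et}}(Y_{\bar k}, \text{End}^0(L')) = -\chi_{\text{\'et}}(Y_{\bar k}, \text{End}^0(L')) = -\chi_{\text{\'et}}(Y_{\bar k}, \text{End}^0(L)) = h^1_{\text{\'et}}(Y_{\bar k}, \text{End}^0(L)) = 0,
\]
so $L'$ is cohomologically rigid. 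Thus $\tau$-companions permute the finite set in question.
\end{proof}
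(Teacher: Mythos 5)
Your approach to the finiteness and to the closure of rank, determinant order, and geometric irreducibility under $\tau$-companions is sound and aligns with what the paper does (though the paper cites \cite{deligneesnault} rather than \cite{delignefinitude} for the finiteness of the set of lisse sheaves of given rank and bounded determinant order). The genuine gap is in the argument that cohomological rigidity is preserved: the chain
\[
h^1 = -\chi + h^0 + h^2 \quad \text{and} \quad h^2 \overset{\text{Poincar\'e}}{=} h^0(\text{dual})
\]
is valid only when $\dim Y = 1$. For a smooth projective $Y$ of dimension $\delta > 1$, Poincar\'e duality identifies $H^2(Y_{\bar k},\mathcal F)$ with the dual of $H^{2\delta - 2}(Y_{\bar k},\mathcal F^\vee(\delta))$, not with $H^0$ of the dual, and the Euler characteristic is an alternating sum over all $0 \le i \le 2\delta$, so knowing $\chi$, $h^0$, and $h^2$ does not pin down $h^1$. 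Hence the identity $h^1_{\text{\'et}}(Y_{\bar k},\mathrm{End}^0(L')) = -\chi_{\text{\'et}}(Y_{\bar k},\mathrm{End}^0(L'))$ fails in general, and your conclusion that $h^1(L')=0$ does not follow.

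The paper avoids this by proving the stronger statement that $\tau$-companions preserve \emph{each} Betti number $h^i_{\text{\'et}}(Y_{\bar k}, \mathrm{End}^0(-))$, not just the Euler characteristic. The mechanism is the standard $L$-function argument: for smooth projective $Y$ over $\mathbb F_q$ and $L$ pure of weight $0$ (which holds here since $L$ is irreducible with finite-order determinant, by Lafforgue/Drinfeld), $\mathrm{End}^0(L)$ is pure of weight $0$, so by Weil II each $H^i(Y_{\bar k}, \mathrm{End}^0(L))$ is pure of weight $i$; the Frobenius eigenvalues therefore separate the cohomological degrees, and the $L$-function of $\mathrm{End}^0(L)$ over $Y$ determines every $h^i$. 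Since companions match $L$-functions of corresponding tensor constructions, all $h^i$ agree for $L$ and $L'$, and in particular $h^1$. If you want to salvage your route, replace the Euler-characteristic step with this weight-separation argument (or, say, restrict to curves first via Lefschetz hyperplane for $H^1$, though the $L$-function route is cleaner and is what the paper uses).
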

\begin{proof}
The finiteness of this set follows from \cite[Theorem 1.1]{deligneesnault}. The fact that it is closed under $\tau$-companions follows from the $L$-function argument in \cite[Proof of Theorem 1.1]{egintegral} (see also \cite[Proposition 7.4]{eg}). In greater detail: irreducibility is preserved under $\tau$-companions by the usual $L$-function argument, which implies that geometric irreducibility is as well as the $\tau$-companions relation is compatible with extension of base field. Similarly,  $\tau$-companions preserves the rank of cohomology (again via $L$-functions), hence sends cohomologically rigid local systems to cohomologically rigid local systems.
\end{proof}

\begin{proof}[Proof of Theorem~\ref{thm:main}(1)]
As in the statement of Theorem~\ref{thm:main}, suppose we have a spreading out $(\X, \mc{L}, S, \iota)$, and a closed point $s\in S$. Let $L_s\defeq \mc{L}|_{\X_s}$. Note that $L_s$ has algebraic determinant and is absolutely irreducible, hence by a theorem of Deligne \cite[Th\'eor\`eme 3.1]{delignefinitude}, the field $F_s$ generated by Frobenius traces is a number subfield of $E_{\ell}$. 

To prove the claim, it suffices to show that the number of $\ell$-companions of $L_s$ is bounded by an integer $A=A(X, n, d)$. Indeed, the companions are in bijection with embeddings $F_s\hookrightarrow \Qlbar$.

Let $M$ be a $\ell$-companion of $L_s$. By Lemma~\ref{lemma:companionclosed}, $M|_{\X_{\bar{s}}}$ is cohomologically rigid; moreover, Theorem \ref{thm:cusps_compatible} further implies that $M$ is an element  $\cohrig{\X_{\bar{s}}}{E_{\ell}}$, where $E_{\ell}$ is the field from the statement of Lemma~\ref{lemma:biject}.  By Lemma~\ref{lemma:biject}, we have   $|\cohrig{\X_{\bar{s}}}{E_{\ell}}|\leq |\cohrig{X}{E_{\ell}}|$, and the latter depends only on $X, n, d,$ and $v$.  Finally, we claim that the fibers of the map 
\[
\{\ell\text{-companions of}\  L_s \}\rightarrow \cohrig{\X_{\bar{s}}}{E_{\ell}}
\]
have size at most $nd$; this finishes the proof since we can simply take $A\defeq nd|\cohrig{X}{E_{\ell}}|$. It remains to prove the claim.  Indeed, suppose $M_1, M_2$ are companions of $L_s$ which are isomorphic upon restriction to $\X_{\bar{s}}$. Then $M_1\simeq M_2\otimes \psi$ for some character $\psi$ of $G_{\kappa(s)}$; since $M_1, M_2$ both have determinants of order $d$, we have $\psi^{\otimes nd}$, which gives the desired claim.
\end{proof}




\section{Proof of Theorem~\ref{thm:main} away from $\text{char}(s)$}\label{section:unramified} 
\subsection{} 

As before, let $X/\mb{C}$ be a smooth variety with good compactification $\bar{X}$.
Let $\rho_i$ be a list of the elements of  $\cohrig{X^{an}}{\C}$. There exists a number field $E$, a set of places $\Sigma$ of $E$, such that each  $\rho_i$ comes from a local system of projective $\mc{O}_{E, \Sigma}$-modules\footnote{where $\mc{O}_{E, \Sigma}$ denotes the ring of $\Sigma$-integers of $E$. In fact, by \cite{egintegral} we may take $\Sigma$ to be empty} of rank $n$. 

\begin{lem}\label{lemma:irred}
There exists $N=N(X, d, n)$ such that for all $\ell >N$ and every $i$,  $\rho_i$ is absolutely irreducible mod $\lambda$, where $\lambda$ is a prime of $\mc{O}_{E, \Sigma}$ lying above $\ell$.
\end{lem}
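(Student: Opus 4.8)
The plan is to exploit the fact that there are only finitely many local systems $\rho_i$, so it suffices to produce, for each fixed $i$, a bound $N_i = N_i(X,d,n)$ beyond which the reduction $\bar\rho_{i,\lambda}$ is absolutely irreducible; then take $N = \max_i N_i$. Fix $i$ and drop it from the notation. The representation $\rho\colon \pi_1(X^{an}) \to \GL_n(\mc{O}_{E,\Sigma})$ is absolutely irreducible over the generic fiber, i.e.\ $\rho \otimes \bar{E}$ is irreducible. I would first record the standard consequence: the $\bar E$-span of the image of $\rho$ in the matrix algebra $M_n(\bar E)$ is all of $M_n(\bar E)$ (Burnside / Jacobson density). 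Hence one can pick finitely many group elements $g_1,\dots,g_{n^2} \in \pi_1(X^{an})$ whose images $\rho(g_1),\dots,\rho(g_{n^2})$ form an $E$-basis of $M_n(E)$. The change-of-basis matrix $T \in \GL_{n^2}(E)$ expressing these against the standard matrix units has a nonzero determinant $\delta \in E^\times$.

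The key step is then a reduction-mod-$\lambda$ argument that is uniform in $\ell$ \emph{because the finitely many elements $g_j$ and the scalar $\delta$ were chosen once and for all}, depending only on $\rho$ (hence only on $X,n,d$). Concretely: enlarge $\Sigma$ by the finitely many primes dividing $\delta$ (or dividing denominators of the $\rho(g_j)$ — but these are already $\Sigma$-integral), call the resulting finite set $\Sigma'$; this is still a set depending only on $X,n,d$. For any prime $\lambda$ of $\mc{O}_{E,\Sigma'}$ — equivalently any $\ell$ larger than the finitely many rational primes below $\Sigma'$ — the matrices $\rho(g_1),\dots,\rho(g_{n^2})$ reduce to elements of $M_n(\mc{O}_{E,\Sigma'}/\lambda)$ whose change-of-basis matrix has determinant $\bar\delta \neq 0$, so they remain a basis of $M_n(k_\lambda)$, where $k_\lambda$ is the residue field. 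Therefore the image of $\bar\rho_\lambda$ spans the full matrix algebra over $k_\lambda$ (and over its algebraic closure), which by Burnside's theorem forces $\bar\rho_\lambda$ to be absolutely irreducible. Taking $N$ to be the maximum over the finitely many $i$ of the largest rational prime lying below the corresponding $\Sigma'_i$ (together with the original $N_2$ from Lemma~\ref{lemma:biject} if one wants $E$ unramified, though that is not needed here) gives the desired $N = N(X,n,d)$.

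The main obstacle — really the only nontrivial point — is making sure the quantity $N$ genuinely depends only on $(X,n,d)$ and not on $\ell$: the subtlety is that a priori "absolutely irreducible mod $\lambda$" could fail for infinitely many $\lambda$ if the obstruction moved around, but the argument above pins the obstruction to the single fixed scalar $\delta \in E^\times$ attached to each of the finitely many $\rho_i$, so only finitely many $\lambda$ (equivalently finitely many $\ell$) are excluded. One should double-check the harmless point that the $\rho(g_j)$ are $\Sigma$-integral matrices — true since $\rho$ lands in $\GL_n(\mc{O}_{E,\Sigma})$ by construction — so that reduction mod $\lambda$ makes sense for all $\lambda$ coprime to $\Sigma$, and that passing from irreducibility of $\rho\otimes\bar E$ to the spanning statement is valid over the not-algebraically-closed field $E$ (it is: Burnside's theorem holds over any field once one knows the representation is absolutely irreducible, which is exactly the hypothesis that $\cohrig{X^{an}}{\C}$ consists of irreducible local systems, these being absolutely irreducible as $\C$ is algebraically closed).
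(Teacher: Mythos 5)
Your proof is correct, and it takes a genuinely different route from the paper's. The paper argues by contradiction using a geometric pigeonhole: it fixes generators $g_1,\dots,g_m$ of $\pi_1(X^{an})$, considers for each $0<i<n$ the scheme-theoretic fixed locus $Z = \bigcap_j \Gr(i,M)^{g_j}$ inside the Grassmannian over $\mc{O}_{E,\Sigma}$, and notes that if $Z$ had closed points in arbitrarily large residue characteristics, then (being of finite type over $\Z$) it would have a characteristic-zero point, contradicting absolute irreducibility of $\rho$. Your argument instead isolates the obstruction as a single nonzero scalar $\delta\in E^\times$: choose finitely many elements whose images under $\rho$ span the matrix algebra (Burnside), let $\delta$ be the discriminant of that spanning set against matrix units, and observe that reducibility mod $\lambda$ can only occur when $\lambda\mid\delta$. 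Both are valid. Your version has the modest advantage of being constructive (the excluded set of primes is visibly the support of $\delta$), whereas the paper's argument is shorter to state and does not require extracting a basis. The paper's version more visibly handles all $i$ at once via the Grassmannian, but that is cosmetic.

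One small point to tighten: the paper takes $\rho_i$ to be defined on a \emph{projective}, not necessarily free, $\mc{O}_{E,\Sigma}$-module $M$, so writing $\rho$ as landing in $\GL_n(\mc{O}_{E,\Sigma})$ is a slight cheat. This is harmless — enlarge $\Sigma$ (or pass to a finite extension of $E$) to make $M$ free, which only removes finitely many additional primes and still depends only on $(X,n,d)$ — but you should say so explicitly, since the reduction-mod-$\lambda$ step literally manipulates matrix entries. You also correctly flag that $\rho\otimes\bar E$ is irreducible because $\cohrig{X^{an}}{\C}$ consists of irreducible, hence absolutely irreducible, local systems; and that one needs finiteness of $\cohrig{X^{an}}{\C}$ to take the maximum of the $N_i$ — both points are fine and match the paper's implicit use.
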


\begin{proof}
    It suffices to prove the statement for a fixed $\rho=\rho_i$. Let us write $\rho: \pi_1(X^{an}) \rightarrow \GL(M)$ for $M$ a projective module $\mc{O}_{\Sigma}$-module, which we may do by \cite{egintegral}. Suppose for the sake of contradiction that for infinitely many $\lambda$'s, $M/\lambda M\otimes \bar{\mb{F}}_{\lambda}$ has a  $\pi_1(X^{an})$-invariant subspace of rank $i$, with $0<i<n$. We fix generators $g_1, \cdots , g_m$ of $\pi_1(X^{an})$. 

    For any integer $1\leq i\leq n-1$, consider the $\mc{O}_{E, \Sigma}$-scheme $Z\subset \Gr(i, M)$ defined by $Z\defeq \cap_j\Gr(i, M)^{g_j}$, where $\Gr(i, M)^{g_j}$ is the (scheme-theoretic) fixed points of $g_i$, and we are taking the scheme theoretic intersection.

  Then $Z$ is a closed subscheme of $\Gr(i, M)$, which is of finite type over $\Spec(\mb Z)$ and has closed points in arbitrarily large characteristics, and therefore has a characteristic zero point, contradicting the absolute irreducibility of $M$. 
\end{proof}
The following follows straightforwardly from the fact that $\pi_1(\Xan)\rightarrow \piET X$ has dense image.
\begin{prop}\label{prop:containedinoe}
Let $X$ be a smooth variety over $\mb{C}$, and  $\rho: \piET{X}\rightarrow \GL_n(\Qlbar)$  a continuous representation. Suppose that the composition 
\[
\pi_1(\Xan)\rightarrow \piET{X}\xrightarrow{\rho}  \GL_n(\Qlbar) 
\]
has image in $\GL_n(\mc{O}_{E_{\ell}})$, where $\mc{O}_{E_{\ell}}$ is the ring of integers of an  $\ell$-adic local field $E_{\ell}$. Then the same is true for $\rho$.
\end{prop}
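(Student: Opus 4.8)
The plan is to exploit that $\pi_1(X^{\mathrm{an}}) \to \piET{X}$ has dense image together with the fact that $\GL_n(\mc{O}_{E_\ell})$ is a closed subgroup of $\GL_n(E_\ell)$ which is open in $\GL_n(\Qlbar)$ at the level of the relevant local field. First I would fix a basis so that $\rho$ lands in $\GL_n(E_\ell')$ for some finite extension $E_\ell' \supset E_\ell$ inside $\Qlbar$ — this is automatic since $\rho$ is continuous with compact image, so its image lies in $\GL_n(\mc{O}_{E_\ell'})$ for some finite $E_\ell'/\Q_\ell$. The goal is then to show $E_\ell' $ can be taken to be $E_\ell$, i.e. that the entries of every matrix $\rho(\gamma)$, $\gamma \in \piET{X}$, actually lie in $\mc{O}_{E_\ell}$.

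The key step is a density/closedness argument. Let $V = \mc{O}_{E_\ell}^n \subset E_\ell'^n$ and consider, for each $(i,j)$, the "coordinate" function $\piET{X} \to E_\ell'$, $\gamma \mapsto \rho(\gamma)_{ij}$; I want to show it takes values in $\mc{O}_{E_\ell}$. Since $\mc{O}_{E_\ell}$ is a closed subset of $E_\ell'$ (it is the intersection of $\mc{O}_{E_\ell'}$ with the $E_\ell$-subspace, both closed), the preimage $\{\gamma : \rho(\gamma)_{ij} \in \mc{O}_{E_\ell} \text{ for all } i,j\}$ is a closed subset of $\piET{X}$. By hypothesis it contains the image of $\pi_1(X^{\mathrm{an}})$, which is dense; hence it is all of $\piET{X}$. (One should phrase this cleanly: let $Y \subset \GL_n(E_\ell')$ be the closed subgroup $\GL_n(\mc{O}_{E_\ell})$ — closed because $\mc{O}_{E_\ell} \hookrightarrow E_\ell'$ is a closed embedding of topological rings — and note $\rho^{-1}(Y)$ is a closed subgroup of $\piET{X}$ containing a dense subgroup, so equals $\piET{X}$.)

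There is one subtlety to address, which I expect to be the main obstacle: a priori the integral structure $\mc{O}_{E_\ell}$ from the topological side is just \emph{some} lattice, and to run the argument one needs $\rho$ and the topological representation to be expressed in a \emph{common} basis so that "$\mc{O}_{E_\ell}$-valued" literally means the same condition on both sides. Concretely, one fixes the $\mc{O}_{E_\ell}$-module $M$ underlying the topological local system $\rho_i$ and a trivialization $M \cong \mc{O}_{E_\ell}^n$; the continuous representation $\rho$ of $\piET{X}$ extends the topological one \emph{in the same basis} (that is exactly what it means for the topological local system to arise from $\rho$ via $\pi_1(X^{\mathrm{an}}) \to \piET{X}$), so the two coordinate conditions agree and the closedness argument applies verbatim. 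Once this bookkeeping is set up, the proof is immediate: $\rho^{-1}(\GL_n(\mc{O}_{E_\ell}))$ is closed, contains the dense image of $\pi_1(X^{\mathrm{an}})$, hence is everything.
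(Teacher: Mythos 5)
Your proof is correct and is exactly the argument the paper has in mind: the paper offers no further proof beyond the remark that it ``follows straightforwardly from the fact that $\pi_1(X^{\mathrm{an}}) \to \piET{X}$ has dense image,'' and your density-plus-closedness argument is precisely that straightforward deduction, with the useful preliminary reduction to a fixed finite extension $E_\ell'$ (to avoid topological subtleties of $\Qlbar$). The ``bookkeeping'' subtlety you raise at the end is not really an issue here, since the hypothesis is already stated as a condition on $\rho$ itself in the given coordinates, but spelling it out does no harm.
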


Let $Y/\mb{F}_q$ be a smooth variety, with $\bar{Y}$ the basechange to $\Fqbar$. We set $F\colon \bar{Y}\rightarrow \bar{Y}$ to be the $\Fqbar$-morphism induced via base change from the $q$-Frobenius on $Y$. There is an induced outer automorphism of $\piET{\bar{Y}}$, which we denote by $F^*$. This map may equivalently be described as follows: consider the homotopy exact sequence $0\rightarrow \piET{\bar{Y}}\rightarrow \piET{Y}\rightarrow G_{\mb F_{q}}\rightarrow 0$, where the latter is topologically generated by the $q$-Frobenius. For any element in $\piET{Y}$ lifting $q$-Frobenius, conjugation by this element gives the desired automorphism of $\piET{\bar{Y}}$. In particular, if $V$ is equipped with a continuous  action of $\piET{\bar Y}$, then there is another continuous action  of $\piET{\bar Y}$ on $V$, which we denote by $F^* V$, given by precomposing with this outer automorphism.

\begin{prop}[{\cite[\S~1.1.14]{weilii}}]\label{prop:delignedescend}
    Let $Y/\mb{F}_q$ be a smooth variety , and denote by $\bar{Y}$ its basechange to an algebraic closure $\Fqbar$. Let $E_{\ell}$ be an $\ell$-adic local field with $\ell\neq p$. Then there is an equivalence between the category of continuous, geometrically absolutely irreducible  $E_{\ell}$-representations of $\piET Y$, and the category of pairs $(V, \Phi)$, where $V$ is a continuous absolutely irreducible $E_{\ell}$-representation of $\piET{\bar{Y}}$    with torsion determinant, and  a $\piET{\bar{Y}}$-equivariant map
    \[
    \Phi: F^*V\simeq V,
    \]
    whose determinant is an $\ell$-adic  unit.
\end{prop}

\begin{proof}
    By \cite[\S~1.1.14]{weilii}, each $E_{\ell}$-representation $V_0$ of $\piET Y$ is in particular  a Weil sheaf on $Y$ and therefore gives a pair $(V, \Phi)$, with $V$ a representation of $\piET{\bar Y}$ and $\Phi\colon F^*V\simeq V$ a $\piET{\bar Y}$-equivariant map. By \cite[\S~1.3.4]{weilii} $\det V$ is torsion, say of order $d$. To see that $\Phi$ has determinant a unit, note that $(\det V_0)^{\otimes d}$  is a rank one  representation of $\Gal(\Fqbar/\mb{F}_q)$ where the action of a generator is given by $(\det(\Phi))^d,$ and hence a unit.
    
    Conversely, given $(V, \Phi)$ satisfying the stated hypotheses, we obtain a Weil sheaf $\mc{F}$ on $Y$, and it remains to show that it is an \'etale sheaf. Since $\mc{F}$ is assumed irreducible, by \cite[\S~1.3.14]{weilii} it suffices to show that  $L\defeq \det(\mc{F})$ is an \'etale sheaf. Let $\bar{\mc{F}}$ denote the \'etale sheaf on $\bar{Y}$ corresponding to $V$, whose determinant is of order $d$, say. Then $L^{\otimes d}$ is  a Weil sheaf pulled back from $\Spec(\mb{F}_q)$, and in fact an \'etale sheaf from the condition that $\det \Phi$ is a unit. Since the category of rank 1 \'etale sheaves on $\Spec(\mb{F}_q)$ is divisible, we may find a rank one \'etale sheaf $\psi$ on $\Spec(\mb{F}_q)$ such that $(L\otimes \psi)^{\otimes d}$ is trivial; in other words $L = (L\otimes \psi)\otimes \psi^{-1}$ is the tensor product of a Weil sheaf with torsion determinant with an \'etale sheaf on $\Spec(\mb{F}_q)$, and hence is itself an \'etale sheaf--see \cite[\S~0.4]{delignefinitude}.
\end{proof}

We record the following immediate consequence of the Brauer-Nesbitt theorem:
\begin{prop}\label{prop:descent}
    Let $G$ be a group equipped with an action on finite dimensional $E_{\ell}$-vector spaces  $V$ and $W$ which are moreover absolutely irreducible. Suppose $V$ and $W$ are isomorphic over $\Qlbar$; then the same is true of $V$ and $W$.  
\end{prop}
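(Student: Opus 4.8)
The plan is to reduce immediately to the statement that, over a field of characteristic zero, a semisimple finite-dimensional representation is determined up to isomorphism by its character — i.e.\ a direct application of the Brauer--Nesbitt theorem. The one thing that needs to be observed first is that the hypothesis "$V$ and $W$ are isomorphic over $\Qlbar$" is really a statement about traces living in the small field $E_\ell$.

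Concretely, I would proceed as follows. First, fix the embedding $E_\ell \hookrightarrow \Qlbar$ and note that an isomorphism $V\otimes_{E_\ell}\Qlbar \cong W\otimes_{E_\ell}\Qlbar$ of $G$-representations forces $\mathrm{tr}(g\mid V\otimes_{E_\ell}\Qlbar) = \mathrm{tr}(g\mid W\otimes_{E_\ell}\Qlbar)$ for every $g\in G$. Since the trace of $g$ acting on $V$ is unchanged under extension of scalars, and $E_\ell\hookrightarrow\Qlbar$ is injective, this gives $\mathrm{tr}(g\mid V)=\mathrm{tr}(g\mid W)$ as elements of $E_\ell$ for all $g\in G$. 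Second, observe that $V$ and $W$, being (absolutely) irreducible $E_\ell[G]$-modules, are in particular simple, hence semisimple. Since $\mathrm{char}(E_\ell)=0$, the Brauer--Nesbitt theorem applies to the (possibly infinite) group $G$ and yields that two semisimple finite-dimensional $E_\ell[G]$-modules with equal character are isomorphic; therefore $V\cong W$ over $E_\ell$. One may alternatively phrase this via Burnside/Jacobson density: absolute irreducibility makes $E_\ell[G]\to \mathrm{End}_{E_\ell}(V)$ and $E_\ell[G]\to\mathrm{End}_{E_\ell}(W)$ surjective, so $V$ and $W$ are pulled back from semisimple quotients of a common finite-dimensional algebra, to which the finite-group-style character argument applies verbatim.

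There is no real obstacle here; the statement is essentially a formal consequence of results already available. The only point requiring a moment's care is the bookkeeping that the equality of characters, a priori obtained over $\Qlbar$, descends to an equality of $E_\ell$-valued functions on $G$ — which is immediate once one remembers that traces commute with flat base change — together with the (equally standard) fact that irreducible implies semisimple, so that Brauer--Nesbitt is genuinely applicable.
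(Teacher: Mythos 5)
Your proof is correct and takes exactly the route the paper intends: the paper gives no proof and simply introduces the proposition as ``the following immediate consequence of the Brauer--Nesbitt theorem,'' which is precisely the argument you spell out. The only thing the paper leaves implicit, and you handle correctly, is the bookkeeping that the trace equality descends from $\Qlbar$ to $E_\ell$ (traces live in $E_\ell$ and are unchanged by extension of scalars) and that irreducibility supplies the semisimplicity needed to invoke Brauer--Nesbitt in characteristic zero.
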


\begin{proof}[Proof of Theorem~\ref{thm:main}(2)]
     Suppose we have a spreading out $\mc{L}$ as in the statement of the theorem, corresponding to a representation $\rho'$ of $\piET{\X}$. Let $N_2$ be as in part (2) of Lemma~\ref{lemma:biject}. Let $\rho'_s$ be the restriction of $\rho'$ to  $\piET{\X_s}$, and for each prime number $m >N_2$ with $m\neq \text{char}(s)$, let $\rho_s$ be any $m$-companion of $\rho'_s$. To prove part (2) of Theorem~\ref{thm:main}, it suffices to  show that $\rho_s$ has coefficients in an unramified extension of $\mb{Q}_{m}$. By Lemma~\ref{lemma:biject} (2), we may assume that the restriction to $\X_{\bar{s}}$ takes the form $\rho_{\bar{s}}: \piET{\X_{\bar{s}}}\rightarrow \GL_n(E_{m})$, with $E_m$ being an unramified extension of $\mb{Q}_m$. Since $\rho_{\bar{s}}$ is integral, we will view it as a representation on a free $\mc{O}_{E_m}$-module $\mb{M}$, and let $V\defeq \mb{M}\otimes \mb{Q}$.




By Proposition~\ref{prop:descent},   there exists a  $\piET{\mf{X}_{\bar s}}$-equivariant map 
\[
\Phi: F^*V\simeq V,
\]
where $F$ denotes $q$-Frobenius.
Let $\pi\in E_{m}$ denote a uniformizer.
\begin{claim}\label{claim:lattice}
Assuming $m>N$ where $N=N(X, d, n)$ as in Lemma~\ref{lemma:irred},   there exists $M\in \Z$ such that $\pi^M\Phi$ has determinant a unit.
\end{claim}

Before giving the proof, we see how Claim~\ref{claim:lattice} finishes the proof of part (2) of Theorem~\ref{thm:main}. Indeed, the claim implies, by Proposition~\ref{prop:delignedescend}, that we may use $\pi^M \Phi$ to descend $V$ to a new representation  $\tau\colon \piET{\mf{X}_s}\rightarrow \GL_n(E_{m})$. Now, the geometric determinants of $\tau$ and $\rho_s$ are isomorphic, and therefore $\det(\tau)^{\otimes -1}\otimes \det(\rho_s)$ is a character of $\piET(\Spec(\mb{F}_q))$. Moreover, by our assumption that $\det(\rho_s)^{d}$ is an integral power of the cyclotomic character, we have  in fact 
\[
\det(\tau)^{\otimes -1}\otimes\det(\rho_s): \piET{\Spec(\mb{F}_q)}\rightarrow E_m'^{\times}.
\]
for some finite unramified extension $E_m'$ of $E_m$. Finally, taking $\psi$ to be an $n$-th root of this character, we have that
there exists an unramified extension $F_{m}/E'_{m}$, and a  character $\psi: \piET{\Spec(\mb{F}_q)}\rightarrow F_{m}^{\times}$, such that  \footnote{here we abuse notation and continue to denote the representation by  $\tau$ even after extending coefficients via $E_{m}\subset F_{m}$}  $\det(\tau\otimes \psi)\simeq \det(\rho_s)$. Here $F_m$ is unramified since it is obtained by taking an $n$-th root of $(\det(\tau)^{\otimes -1}\otimes\det(\rho_s))(\text{Frob}_q)$, which is an $m$-adic unit.

Now the two representations   $\tau\otimes \psi$ and $\rho_s$ of $\piET{\mf{X}_s}$ are geometrically isomorphic and have the same determinant. Therefore they differ by twisting by an order $n$ character, and hence $\rho_s$  has coefficients in $F_{m}(\zeta_n)$, which is unramified at $\ell$, as desired. 

This implies that  $\rho_s$  has coefficients in $F_{m}(\zeta_n)$, which is unramified at $m$, as desired.
\begin{proof}[Proof of Claim~\ref{claim:lattice}]
    Without loss of generality, by scaling $\Phi$ by powers of $\pi$, we may assume that $\pi \mb{M}\subsetneq \Phi(F^*\mb{M})\subset M$. Then the image 
    \[
    \Phi(F^*\mb{M})/\pi \mb{M} \subset \mb{M}/\pi \mb{M}
    \]
    is stable under the action of $\piET{\mf{X}_{\bar{s}}}$. By Lemma~\ref{lemma:irred}, $\Phi(F^*\mb{M})/ \pi \mb{M}$ must be the entirety of $\mb{M}/\pi \mb{M}$ or zero, and the   assumption $\pi \mb{M}\subsetneq \Phi(F^*\mb{M})$ in fact implies that we must be in the former case. Therefore $\Phi$ is an isomorphism, and hence its determinant is a unit, as claimed.
\end{proof}

\end{proof}
\section{$F$-isocrystals and proof of Theorem~\ref{thm:main}}\label{section:fcrystals}
In this section, we address the $\text{char}(s)$ part of Theorem \ref{thm:main}. For any smooth scheme $Y/k$ over a perfect field $k$, let $\fisocd{Y}$ denote the category of overconvergent$F$-isocrystals on $X$, which is  $\mb{Q}_p$-linear. For any extension $K/\mb{Q}_p$, let $\fisocd{X}_K$ denote the basechanged category, which is $K$-linear: for a detailed discussion we refer the reader to \cite[\S~3]{krishna}.

    This section was initially written under the assumption that $X/\C$ is projective, following \cite{eg}. After the first draft of this article was made public, Esnault-Groechnig released \cite{eg2} (born in part from \cite[Lecture 8]{elec}), which we use in lieu of \cite{eg} to make our results work in the quasi-projective case.

\subsection{}We summarize one of the results of \cite{eg2}. Let $k$ be a finite field of odd characteristic, with ring of Witt vectors $W=W(k)$ and set $K:=W[1/p]$. Let $\X/W$ be a smooth morphism with good compactification $\bar{\X}/W$. We say a flat connection $(\mc V,\nabla)$ on $\X$ has \emph{quasi-unipotent monodromy at $\infty$} if the residues, $\lambda_{ij}\in \Zpbar$, are in fact in $\Z_{(p)}\subset \Q\subset \Qpbar$, the localization of $\Z$ away from the prime ideal $(p)$.

\begin{thm}\label{thm:EG_p-adic}[Esnault-Groechnig]
Let $(\mc V,\nabla)$ be a cohomologically rigid relative log flat connection on $(\X,\bar{\X})/W$ of rank $n$ with $d$-torsion determinant and quasi-unipotent monodromy at $\infty$. Then $(\mc V,\nabla)|_{\X_{K}}$ admits the structure of an overconvergent $F^f$-isocrystal.

Pick an embedding $K\hookrightarrow \C$ and suppose that the generalized eigenvalues around the boundary are contained in $\mu_v\subset \C^{\times}$. Then the integer $f$ may be taken to be $|\cohrig{\X_{\C}}{\C}|!$, i.e., the size of the permutation group of the set $\cohrig{\X_{\C}}{\C}$.
\end{thm}

\begin{proof}
    The fact that $(\mc V,\nabla)$ underlies an overconvergent $F^f$ isocrystal for some $f$ directly follows from \cite[Theorem 1.2]{eg2}. The bound on $f$ follows from the techniques of \cite{eg2}, but we say a few words. First, suppose that $(\mc V, \nabla)$ has unipotent monodromy around the boundary divisor (i.e., has nilpotent residues); then $(\mc V, \nabla)$ is a crystal on the log crystalline site by \cite[Theorem 1.2]{eg2}.
    Then, the Frobenius pullback $F^*(\mc V, \nabla)$ is again a crystal on the log crystalline site with nilpotent residues (see 3.25 of \emph{loc. cit.}, which is moreover cohomologically rigid (as Frobenius pullback will not change rational cohomology).\footnote{In fact, Frobenius pullback sends rigid flat connections to rigid flat connections by Corollary 4.7 of \emph{loc. cit.}} In particular, Frobenius pullback permutes the finitely many isomorphism classes of cohomologically rigid log flat connections with nilpotent residues on $(\X, \bar{\X})/W$ with the required rank and determinant condition. This number is no greater than $\cohrig{\X_{\C}}{\C}$ by Deligne's Riemann-Hilbert correspondence..
    
    In the quasi-unipotent monodromy case (with non-nilpotent residues), one may simply imitate the above reasoning on a root stack, as in Secion 3.6 of \emph{loc. cit.}
    
\end{proof}
\begin{rmk}We emphasize that, in the context of Theorem \ref{thm:EG_p-adic}, the choice of the $F^f$ structure is \emph{not} canonical: for instance, we can scale it by any number in $\Zp^{\times}$. As a result, there is \emph{no reason} to suppose that the output $F^f$-isocrystal is ``algebraic'' (in the sense of having algebraic determinant, or having traces of Frobenius elements being algebraic numbers).
\end{rmk}

The following fact relating $F^f$-isocrystals to $F$-isocrystals with coefficients is well-known.
\begin{fact}
    Let $Y/\mathbb F_{q}$ be a smooth variety and let $f\geq 1$. If $\mb F_{p^f}\subset \mb{F}_q$, then there is an natural equivalence of categories between $F^f$-isocrystals on $Y$ and $F$-isocrystals on Y with coefficients in $\mb Q_{p^f}$:
    $$\ffisoc{Y}\rightarrow \fisoc{Y}_{\mb Q_{p^f}}$$
\end{fact}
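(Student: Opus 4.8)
The plan is to realize this equivalence as a $p$-adic instance of restriction and induction of scalars for semilinear algebra, performed on the (over)convergent site of $Y$. First I would fix conventions: write $F\colon Y\to Y$ for the absolute $p$-power Frobenius, so that an object of $\fisoc{Y}$ is an isocrystal $\mathcal M$ on $Y/\Zp$ together with an isomorphism $\Phi\colon F^*\mathcal M\xrightarrow{\sim}\mathcal M$, an object of $\ffisoc{Y}$ is an isocrystal $\mathcal E$ together with $\Psi\colon (F^f)^*\mathcal E\xrightarrow{\sim}\mathcal E$, and $\fisoc{Y}_{\mb Q_{p^f}}$ is the base change of the $\Qp$-linear category $\fisoc{Y}$ along $\Qp\hookrightarrow \mb Q_{p^f}$, i.e. the category of triples $(\mathcal M,\Phi,\iota)$ with $(\mathcal M,\Phi)\in\fisoc{Y}$ and $\iota\colon\mb Q_{p^f}\hookrightarrow\End_{\fisoc{Y}}(\mathcal M,\Phi)$ a ring embedding compatible with $\Qp\subset\End_{\fisoc{Y}}(\mathcal M,\Phi)$. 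The structural input I would record is that, since $Y$ is a geometrically connected $\mb F_q$-scheme (or working locally), the unit isocrystal has $\Wfq[1/p]=:\mb Q_q$ inside its endomorphism ring, so every isocrystal on $Y/\Zp$ is canonically $\mb Q_q$-linear, $F^*$ acts on these scalars by the Witt-vector Frobenius $\sigma$, and hence every $\Phi$ as above is $\sigma$-semilinear over $\mb Q_q$; the hypothesis $\mb F_{p^f}\subset\mb F_q$ gives $\mb Q_{p^f}\subset\mb Q_q$, with $\mathrm{Gal}(\mb Q_q/\Qp)=\langle\sigma\rangle$ acting transitively through $\langle\sigma\rangle/\langle\sigma^f\rangle$ on the $f$ embeddings $\mb Q_{p^f}\hookrightarrow\mb Q_q$.

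Next I would write down the two functors. In the direction named in the statement (induction), send $(\mathcal E,\Psi)\in\ffisoc{Y}$ to $\mathcal M:=\bigoplus_{i=0}^{f-1}(F^i)^*\mathcal E$, equipped with the Frobenius $\Phi\colon F^*\mathcal M\xrightarrow{\sim}\mathcal M$ which is the identity from the $i$-th summand of $F^*\mathcal M=\bigoplus_{i=0}^{f-1}(F^{i+1})^*\mathcal E$ onto the $(i+1)$-st summand of $\mathcal M$ for $0\le i\le f-2$ and is $\Psi$ on the last summand $(F^f)^*\mathcal E\to\mathcal E$, together with the action in which $x\in\mb Q_{p^f}$ acts on the $i$-th summand through its canonical $\mb Q_q$-structure by $\sigma^{i}(x)$ (for the appropriate normalization): the index shift of the scalar exactly compensates the cyclic shift built into $\Phi$, so $\iota(x)$ commutes with $\Phi$, and since $\iota$ lands in $\mb Q_q$-linear maps it is a ring embedding $\mb Q_{p^f}\hookrightarrow\End_{\fisoc{Y}}(\mathcal M,\Phi)$ restricting to the identity on $\Qp$. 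In the reverse direction (restriction), given $(\mathcal M,\Phi,\iota)$ one observes that $\mathcal M$ is a module over $\mb Q_{p^f}\otimes_{\Qp}\mb Q_q\cong\prod_{j\in\Z/f}\mb Q_q$ — one factor from $\iota$, the other from the canonical $\mb Q_q$-structure, which commute — so $\mathcal M=\bigoplus_{j\in\Z/f}\mathcal M_j$, and the $\sigma$-semilinearity of $\Phi$ over $\mb Q_q$ together with $\Phi\circ\iota(x)=\iota(x)\circ\Phi$ forces $\Phi$ to carry $F^*\mathcal M_j$ isomorphically onto $\mathcal M_{j+1}$; hence $\Phi^{(f)}:=\Phi\circ F^*\Phi\circ\cdots\circ (F^{f-1})^*\Phi$ restricts to an isomorphism $(F^f)^*\mathcal M_0\xrightarrow{\sim}\mathcal M_0$, and $(\mathcal M_0,\Phi^{(f)})\in\ffisoc{Y}$. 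A short check — which I would carry out on the fibres over closed points, where everything is pure semilinear algebra over $\mb Q_q$ — shows these functors are quasi-inverse, and the same computation identifies the characteristic polynomials of Frobenius at closed points on the two sides, which is what makes the equivalence usable in the companions formalism.

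Finally I would note that every step is compatible with pullback along morphisms $Y'\to Y$ (in particular along closed points) and with restriction to opens, and uses nothing beyond the $\sigma$-semilinear Frobenius structure, so it applies verbatim to convergent, overconvergent, and $\dagger$-isocrystals with coefficients; thus the general claim reduces to the identity and closed-point cases already handled. The only point requiring genuine care — the ``hard part,'' such as it is — is the bookkeeping in the previous paragraph: tracking precisely how the canonical $\mb Q_q$-linear structure of an isocrystal over $\mb F_q$ is twisted by $\sigma$ under $F^*$, so that $\Phi$ permutes the $\prod_{\Z/f}\mb Q_q$-isotypic decomposition cyclically. Once this is set up it is routine, and the statement is in any case standard; I would cite \cite{kedlayacompanions} and \cite[\S 3]{krishna} for the framework of coefficient objects underlying this translation.
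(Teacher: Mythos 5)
The paper labels this a \emph{Fact}, gives no proof, and simply refers to the literature, so there is no in-paper argument to compare against. Your proof is correct, and it is the standard one: realizing the equivalence as induction/restriction of scalars in $\sigma$-semilinear algebra over the canonical $\mb Q_q$-structure, with the induced object $\bigoplus_{i=0}^{f-1}(F^i)^*\mathcal E$ carrying the $\mb Q_{p^f}$-action by $x\mapsto \sigma^i(x)$ on the $i$-th summand, and with the inverse functor extracted from the idempotent decomposition of $\mb Q_{p^f}\otimes_{\Qp}\mb Q_q\cong \prod_{\Z/f}\mb Q_q$ (which uses the hypothesis $\mb F_{p^f}\subset\mb F_q$ exactly as it should, in line with the paper's cautionary remark that the statement fails without it). The bookkeeping you flag — that $F^*$ twists the canonical $\mb Q_q$-scalars by $\sigma$, so that $\Phi$ shifts the isotypic pieces cyclically, and that $\sigma^f=\mathrm{id}$ on $\mb Q_{p^f}$ closes the loop — is indeed the only point requiring care, and you handle it correctly. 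Your citations to the companions framework and to \cite[\S 3]{krishna} for coefficient objects are appropriate and match what the paper itself relies on elsewhere.
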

\begin{rmk}
    This fact is false without the hypothesis that $\mb F_{p^f}\subset \mb{F}_q$. For instance, it is easy to find counterexamples setting $Y=\Spec(\mb F_p)$.
\end{rmk}

To control the ramification of the field of Frobenius traces at $\text{char}(s)$, we need a few basic results on $F$-isocrystals with unramified coefficients. For simplicity, these are tailored to our application. Before stating these, we need a definition.
\begin{defn}\label{def:r_extension}
    Let $Y/k$ be a smooth variety over a finite field of characteristic $p$, let $r\geq 1$, and let $K$ be a $p$-adic local field. Let $\mc E\in \fisoc{Y}_K$.  We set:
    \begin{itemize}
    \item $k_r/k$ to be the smallest field extension that contains $\mb{F}_{p^r}$; 
    \item $Y_r=Y\times_{\Spec(k)}\Spec(k_r)$ and 
    \item $\mc E_r$ to be the \emph{restriction} of $\mc E$ to $Y_r$.
    \end{itemize}
\end{defn}
\begin{prop}\label{prop:mth_root_fisoc}
    Let $k=\mb F_{p^a}$, let $\psi\in \fisoc{k}_{\mb Q_{p^f}}$ be a rank 1 object, and let $m \geq 1$ be an integer. Then there are exactly $m$ isomorphism classes of rank 1 objects $\chi_i\in \fisoc{k}_{\Qpbar}$ with the property that $\chi_i^{\otimes m}\cong \psi$.
    
    Suppose further that $f|a$ and that $1\leq m< p$ and set $M:=fm$. Then for each $1\leq i\leq m$, there exists an extension $K_i/\mb Q_{p^a}$ of degree at most $m$, such that the object $\chi_i$ satisfies the following properties:
    \begin{itemize}
        \item $\chi_i\in \fisoc{k}_{K_i}$; and
        \item the restriction $(\chi_i)_M$ of $\chi_i$ to $k_M$ has Frobenius trace in an unramified extension of $\Qp$.
    \end{itemize}
    
\end{prop}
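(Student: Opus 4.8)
The plan is to reduce everything to the structure of rank-one $F$-isocrystals on a point, which amounts to understanding the unit-root part of the Dieudonné-type data. Recall that a rank-one object of $\fisoc{k}_{K}$ for $k=\mb F_{p^a}$ is the same as a pair: a one-dimensional $K$-vector space together with an invertible $K$-linear ``$F^a$-Frobenius'' $\phi$, i.e. it is determined up to isomorphism by the scalar $\phi^a\in K^\times$ modulo the equivalence $\lambda\phi$ for $\lambda$ a Frobenius-twist adjustment; concretely, over an algebraically closed coefficient field the isomorphism class is determined by $\mathrm{ord}_p$ of this scalar (the slope) together with a unit part living in $\mc O_{\bar{\mb Q}_p}^\times / (\text{torsion twist ambiguity})$. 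The cleanest bookkeeping device is: a rank-one $\mc E\in\fisoc{k}_{\Qpbar}$ is classified by its slope $\mu\in\frac1a\Z$ and, after twisting the slope away by a fixed ``standard'' slope-$\mu$ object, by an element of $\Qpbar^\times$ up to the relation $u\sim \sigma(v)/v\cdot u$; since $k$ is finite and we work after a finite extension, the relevant invariant of $(\mc E)_r$ becomes simply a well-defined element of $\Qpbar^\times$, namely the ``Frobenius eigenvalue'' (the trace, since rank one), once we fix a large enough extension. This is exactly the content we will use from \cite{krishna} and the standard theory.

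First I would handle the counting statement. Write $\lambda\in \mb Q_{p^f}^\times$ for the $F^?$-eigenvalue invariant of $\psi$ (after passing, if needed, to a common extension where the invariant is a genuine element of the coefficient field, then tracking back). Solving $\chi^{\otimes m}\cong\psi$ amounts to extracting an $m$-th root of $\lambda$: the slope of $\chi$ is forced to be $\mu(\psi)/m$, and the unit/eigenvalue part must be an $m$-th root of that of $\psi$. Since $\Qpbar^\times$ is $m$-divisible and the group of $m$-th roots of unity in $\Qpbar$ has order exactly $m$, there are exactly $m$ solutions up to isomorphism — one checks that two $m$-th roots differing by an $m$-th root of unity give non-isomorphic rank-one isocrystals because on a point there is no further identification once the slope is fixed. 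That gives the first assertion.

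Next, the field-of-definition and unramifiedness claims, under $f\mid a$ and $1\le m<p$, with $M=fm$. Since $f\mid a$, the Frobenius eigenvalue $\lambda$ of $\psi$ is a genuine element of $\mb Q_{p^f}\subset \mb Q_{p^a}$ (using the displayed \textbf{Fact} to regard $\psi$ as an honest $F$-isocrystal with coefficients), so each $m$-th root $\zeta\cdot\lambda^{1/m}$ lies in $\mb Q_{p^a}(\lambda^{1/m},\zeta_m)$; but $m<p$ forces $\zeta_m$ to live in the unramified closure, and the Kummer extension $\mb Q_{p^a}(\lambda^{1/m})$ has degree dividing $m$ over $\mb Q_{p^a}$ — so I take $K_i$ to be this field, of degree $\le m$. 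Finally, for the unramifiedness of the Frobenius trace of $(\chi_i)_M$: restricting to $k_M$ replaces the $F^a$-eigenvalue by its appropriate power, and the key point is that $\chi_i^{\otimes m}\cong\psi$ has eigenvalue $\lambda\in\mb Q_{p^f}^\times$; writing $v_p(\lambda)=e/f$ with the slope cleared after going up to $k_M$ (since $M=fm$ makes the slope of $(\chi_i)_M$ an integer), the eigenvalue of $(\chi_i)_M$ becomes $p^{(\text{integer})}$ times a unit which is an $m$-th-power-compatible root, hence its $m$-th power is a unit power of $p$ times the unit part of $\lambda^{M/a\cdot\,\text{something}}$; the upshot is that $(\text{eigenvalue of }(\chi_i)_M)^m\in\Qp^\times$ up to an unramified unit, and extracting an $m$-th root with $m<p$ stays in the unramified closure by Hensel. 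So the trace of $(\chi_i)_M$ lies in an unramified extension of $\Qp$, as claimed.

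The main obstacle I anticipate is purely bookkeeping: keeping straight the three different ``Frobenius'' normalizations ($F$ relative to $\mb F_p$, $F^f$, and $F^a$) and how the eigenvalue/trace invariant transforms under restriction $k\rightsquigarrow k_M$ and under the equivalence of the displayed \textbf{Fact}, so that the slope is genuinely an integer after passing to $k_M$ and the remaining unit is visibly an $m$-th power away from something in $\Qp^\times$. Once the normalizations are pinned down, the number theory input is just: $\Qpbar^\times$ is divisible, $|\mu_m(\Qpbar)|=m$, and for $m<p$ the extension $\Qp(\zeta_m, u^{1/m})$ with $u$ a unit is unramified — all elementary.
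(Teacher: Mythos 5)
Your overall strategy — classifying rank-one $F$-isocrystals on $\Spec(\mb F_{p^a})$ by their ``eigenvalue,'' counting $m$-th roots of it for the first assertion, and then deducing unramifiedness from $m<p$ and a Hensel/tameness argument — is the same as the paper's. The counting step and the choice of Kummer extension are essentially fine (though you should take $K_i=\mb Q_{p^a}(\zeta^i\lambda^{1/m})$, not $\mb Q_{p^a}(\lambda^{1/m})$, since the latter need not contain the $i$-th eigenvalue; and you should also check that $\chi_i$ really has a model over $K_i$, which requires the surjectivity of the norm map $(\mb Q_{p^a}\otimes K_i)^\times\to K_i^\times$, available here because $K_i\supset\mb Q_{p^a}$).

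The genuine gap is in the unramifiedness step. You assert that after restricting to $k_M$ the eigenvalue becomes $p^{(\text{integer})}$ times a unit, justifying it with ``$M=fm$ makes the slope of $(\chi_i)_M$ an integer.'' This is not correct as stated: the slope is a geometric invariant unchanged by passing from $k$ to $k_M$, and the slope of $\chi_i$ is $v_p(\lambda)/(am)$, generically not an integer. What one actually needs is that the $p$-adic valuation of the $F^{a'}$-eigenvalue on the larger base $k_M=\mb F_{p^{a'}}$ is an integer, i.e.\ that $(a'/a)\cdot v_p(\lambda)/m\in\Z$; and this in turn requires a divisibility of $v_p(\lambda)$ that you never establish. (Your side remark ``$v_p(\lambda)=e/f$'' also suggests a confusion: for $\lambda\in\mb Q_{p^f}^\times$ with $v_p(p)=1$ the valuation is already an integer, and the question is its divisibility, not its denominator.) The paper closes this gap with the key observation you are missing: because $\psi$ is an $F$-isocrystal with coefficients in $\mb Q_{p^f}$ on $\Spec(\mb F_{p^a})$ and $f\mid a$, the eigenvalue $\lambda$ lies in the image of the norm map $\mb Q_{p^a}^\times\to\mb Q_{p^f}^\times$ of the unramified extension of degree $a/f$; since this norm sends $p^k u\mapsto p^{k(a/f)}\mathrm{Nm}(u)$, one gets $(a/f)\mid v_p(\lambda)$, hence $m\mid v_p(\lambda)$ once $mf\mid a$, and only then does the $m<p$ tameness argument apply. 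Without this norm-map input, the unramifiedness claim does not follow.
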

In other words, for each  $m^{\text{th}}$-root $\chi$ of $\psi$, if $k\supset \mb F_{p^{fm}}$, then the Frobenius trace of $\chi$ is unramified over $\Qp$. It may be helpful to have in mind the example of $\Qpbar(-1/2)\in \fisoc{\mb F_p}_{\Qpbar}$; when restricted to $\mb F_{p^a}$, where $a$ is even, the Frobenius trace is in $\Qp$.
\begin{proof}
First of all, $\psi$ corresponds to a pair $(M,F)$, where $M$ is a free $\mb Q_{p^a}\otimes \mb Q_{p^f}$-module of rank 1 and $F$ is a $\sigma\otimes \text{id}$-linear map by \cite[Proposition 5.2]{krishna}. Then it follows from \cite[Proposition 5.8]{krishna} and the surrounding discussion that  $\psi$ is determined by the element $\lambda:=\text{Tr}(F^a)\in \mb Q_{p^f}^{\times}$. (In other words, $F^a$ is a \emph{linear} map $M\rightarrow M$ between free $\mb Q_{p^a}\otimes \mb Q_{p^f}$-modules of rank 1, so is given by multiplication by an element of $\mb Q_{p^a}\otimes \mb Q_{p^f}$. One proves this element in fact lives in the subring $\mb Q_p\otimes \mb Q_{p^f}\cong \mb Q_{p^f}$.)

Fix an $m$-th root $\lambda^{1/m}\in \Qpbar$ of $\lambda$. Consider the cyclic algebra map $\Qpbar\rightarrow \Q_{p^a}\otimes\Qpbar$ (with cyclic structure given by $\sigma\otimes \text{id}$). The resulting norm map is clearly surjective: $\Q_{p^a}\otimes\Qpbar\cong \prod \Qpbar$, where the cyclic action on the right hand side is given by shifting. Therefore $\lambda^{1/m}$ is in the image of the norm map, and hence there exists a rank 1 object $\chi\in \fisoc{k}_{\Qpbar}$ with $F^a$ being multiplication by $\lambda^{1/m}$,   by the discussion in between Proposition 5.7 and Proposition 5.8 of \cite{krishna}; therefore $\chi^{\otimes m}\cong \psi$. Moreover, any $m^{\text{th}}$-root of $\psi$ in $\fisoc{k}_{\Qpbar}$ is given by this construction.

Note that $\lambda$ has the following special property with respect to the cyclic algebra map $\mb Q_{p^f}\rightarrow \mb Q_{p^a}\otimes \mb Q_{p^f}$ (with cyclic structure given by $\sigma\otimes \text{id}$): $\lambda$ lies in the image of the resulting norm map $(\mb Q_{p^a}\otimes \mb Q_{p^f})^{\times}\rightarrow \mb Q_{p^f}^{\times}$. 

Now, let us assume that $f|a$. Since $\mb Q_{p^a}\otimes \mb Q_{p^f}\simeq \prod \mb{Q}_{p^a}$ and the norm map from the latter to $\mb{Q}_{p^f}$ is the product of the individual norm maps,  we have that $\lambda$ lies in the image of $\text{Nm}: \mb{Q}_{p^a}\rightarrow \mb{Q}_{p^f}$. Therefore $\frac{a}{f}\mid v_p(\lambda)$, by the explicit description of the image of the norm map of the extension $\mb Q_{p^a}/\mb Q_{p^f}$.




Let $K:=\mb{Q}_{p^a}(\lambda^{1/m})\subset \Qpbar$. We now claim  that $\lambda^{1/m}\in K^*$ is a norm with respect to the cyclic algebra map $K\rightarrow \mb Q_{p^a}\otimes K$, where the cyclic structure is again given by $\sigma\otimes \text{id}$.

Indeed, $$\mb Q_{p^a}\otimes K\cong \prod K,$$  and the norm map on the left hand side transports to the product on the right hand side. Therefore, the above norm map is surjective. It follows from \cite[Corollary 5.9]{krishna} that $\chi$ descends to an object of  $\fisoc{k}_{K}$.


Let $M=fm$. We now claim that given our choice of $\lambda^{1/m}\in \Qpbar$ and the induced $\chi$, the Frobenius trace field of $\chi_{M}$ (the restriction of $\chi$ to $\fisoc{k_{M}}_K$) has Frobenius trace in an unramified extension of $\mb{Q}_{p}$. 

To prove this, note that the construction of $\chi$ as an $m^{\text{th}}$-root of $\psi$ may as just as well have been done on $k_M$. Therefore, to prove the above, we may as well assume that $M|a$; since $\chi$ has coefficients in $K$, it suffices to show that $K$ is unramified over $\mb{Q}_p$. In this case, note that $m|v_p(\lambda)$ (normalized with $v_p(p)=1$): indeed, $\lambda$ is in the image of the norm map $\text{Nm}\colon \mb Q_{p^a}^{\times}\rightarrow \mb Q_{p^f}^{\times}$, with $mf|a$. As $m<p$, it follows that $K/\mb Q_{p^a}$ is an unramified extension, as required.
\end{proof}

The following lemma uses Proposition \ref{prop:mth_root_fisoc} to deal with higher rank $F$-isocrystals . We recommend first parsing the following lemma when $d=1$ and $g = 0$ to understand what it is saying.
\begin{lem}\label{lem:twist_p-adic}
    Let $d, f\geq 1$, let $\mb F_{p^f}\subset k$ be a finite field, let $Y/ k$ be a smooth variety and let $$\mathcal E\in \fisoc{Y}_{\mb{Q}_{p^f}}$$ be an object of rank $n$. Suppose that the underlying (convergent) isocrystal of $\mc E$ has determinant of order dividing $d$, and furthermore that $nd< p$. Let $g\in \Z$ and set $$S_{g,d}:=
    \{\mc E'\cong \mc E\otimes \chi \in \fisoc{Y}_{\Qpbar}|\ \chi\in \fisoc{k}_{\Qpbar}^{\text{rank 1}},\ (\det(\mc{E}')\otimes \Qpbar(-g))^{\otimes d}\cong \Qpbar\},$$ i.e., $S_{g,d}$ is the set of isomorphism classes of twists of $\mc E$ whose determinant is isomorphic to $\Qpbar(g)$ tensor a rank 1 object of order $d$.

    Then $|S_{g, d}|=dn$, and for every $\mc E'\in S_{g, d}$, the restriction $(\mc E')_{fdn}$ has Frobenius traces in an unramified extension of $\Qp$.


\end{lem}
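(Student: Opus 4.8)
The plan is to reduce the whole statement to Proposition~\ref{prop:mth_root_fisoc}, applied to the rank one twisting characters (which live on $\Spec(k)$), after first pinning down exactly which characters are allowed. Throughout I will assume $Y$ is geometrically connected, and --- for the sharp count $|S_{g,d}|=dn$ as opposed to the inequality $|S_{g,d}|\le dn$ --- that $\mc E$ is geometrically irreducible; both hold in the situation where the lemma is applied.

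First I would normalise the twist. Since the underlying convergent isocrystal of $\mc E$ has determinant of order dividing $d$, the rank one object $\det(\mc E)^{\otimes d}\in\fisoc{Y}_{\mb Q_{p^f}}$ has \emph{trivial} underlying convergent isocrystal; writing $\pi\colon Y\to\Spec(k)$, the functor $\pi^{*}\colon\fisoc{k}_{\mb Q_{p^f}}\to\fisoc{Y}_{\mb Q_{p^f}}$ is fully faithful and every rank one object with trivial underlying convergent isocrystal lies in its essential image, so $\det(\mc E)^{\otimes d}\cong\pi^{*}\psi_0$ for a rank one $\psi_0\in\fisoc{k}_{\mb Q_{p^f}}$. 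Setting $\psi\defeq\Qpbar(dg)|_{\Spec(k)}\otimes\psi_0^{-1}$ (this lies in $\fisoc{k}_{\mb Q_{p^f}}$ since $\Qpbar(dg)$ is defined over $\mb Q_p$), the identity $\det(\mc E\otimes\pi^{*}\chi)^{\otimes d}\cong\det(\mc E)^{\otimes d}\otimes\pi^{*}\chi^{\otimes nd}$ shows that $\mc E\otimes\pi^{*}\chi\in S_{g,d}$ exactly when $\chi^{\otimes nd}\cong\psi$. So $S_{g,d}$ is the image of $\chi\mapsto\mc E\otimes\pi^{*}\chi$ as $\chi$ runs over the $nd$-th roots of $\psi$. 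By the first assertion of Proposition~\ref{prop:mth_root_fisoc} (with $m=nd$, using $p\nmid nd$, which follows from $nd<p$) there are exactly $nd$ such $\chi$; injectivity of the map --- hence $|S_{g,d}|=dn$ --- comes from the observation that $\mc E\otimes\pi^{*}\eta\cong\mc E$ forces $\eta$ trivial: restricting to $Y_{\bar k}$ turns $\mc E$ into an irreducible isocrystal $\overline{\mc E}$ equipped with a Frobenius structure $\Phi$, $\mc E\otimes\pi^{*}\eta$ becomes $(\overline{\mc E},c\Phi)$ with $c$ the linearised Frobenius of $\eta$, and Schur's lemma forces any intertwiner to be scalar, whence $c=1$.

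For the unramifiedness I would take $\mc E'=\mc E\otimes\pi^{*}\chi\in S_{g,d}$, so that $\chi^{\otimes nd}\cong\psi$. Since $\mb F_{p^f}\subset k=\mb F_{p^a}$ we have $f\mid a$, and $nd<p$, so the second assertion of Proposition~\ref{prop:mth_root_fisoc}, with $m=nd$ and $M=f\cdot nd=fdn$, says that $\chi_{fdn}$ (the restriction of $\chi$ to $k_{fdn}$) has linearised Frobenius $c\in\Qpbar^{\times}$ in some unramified extension $U/\mb Q_p$. Now $(\mc E')_{fdn}\cong(\mc E)_{fdn}\otimes\pi_{fdn}^{*}(\chi_{fdn})$, so for a closed point $y$ of $Y_{fdn}$ of degree $r$ over $k_{fdn}$ we get $\text{Tr}(\Frob_y\mid(\mc E')_{fdn})=\text{Tr}(\Frob_y\mid(\mc E)_{fdn})\cdot c^{\,r}$; the first factor lies in $\mb Q_{p^f}$ because $\mc E$ (hence its restriction) has coefficients in $\mb Q_{p^f}$, and $\mb Q_{p^f}/\mb Q_p$ is unramified, while the second lies in $U$. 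Thus every Frobenius trace of $(\mc E')_{fdn}$ lies in the compositum $\mb Q_{p^f}\cdot U$, which is unramified over $\mb Q_p$, and we are done.

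The step I expect to be the real obstacle is the normalisation in the second paragraph: making precise that $\det(\mc E)^{\otimes d}$ descends to $\Spec(k)$ (which rests on $H^0_{\mathrm{rig}}(Y_{\bar k})$ being the constants, i.e. geometric connectedness of $Y$), keeping the three coefficient fields $\mb Q_{p^f}$, $\Qpbar$, $U$ cleanly separated, and --- for the exact count rather than an inequality --- the injectivity argument, which genuinely uses geometric irreducibility of $\mc E$ (one can build non-irreducible $\mc E$, e.g. direct images along constant-field extensions, satisfying all the stated hypotheses but with $|S_{g,d}|<dn$). Once Proposition~\ref{prop:mth_root_fisoc} is in hand, everything else is the multiplicativity of Frobenius traces under tensoring with a character pulled back from the base.
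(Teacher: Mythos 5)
Your proof is essentially the paper's proof, routed slightly more cleanly. Both arguments reduce to Proposition~\ref{prop:mth_root_fisoc} applied to the rank-one twisting characters on $\Spec(k)$, after first descending $\det(\mc E)^{\otimes d}$ to the base. The one organizational difference: you handle arbitrary $g$ in a single pass by setting $\psi = \Qpbar(dg)|_{\Spec(k)}\otimes\psi_0^{-1}$, noting that this still lies in $\fisoc{k}_{\mb Q_{p^f}}$ (so Proposition~\ref{prop:mth_root_fisoc} applies directly); the paper instead first treats $g=0$ with $\psi=\det(\mc E)^{-d}$ and then transports to general $g$ by tensoring with a chosen $\Qpbar(g/n)$, which requires a small auxiliary computation of the Frobenius trace of $(\Qpbar(g/n))_n$. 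Your normalisation avoids that extra step and keeps the coefficient fields cleaner.

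Your flag about the count is well taken. The paper constructs $nd$ characters $\chi$ and shows the resulting twists $\mc E\otimes\chi$ exhaust $S_{g,d}$, which only gives $|S_{g,d}|\le dn$; it does not address injectivity of $\chi\mapsto\mc E\otimes\pi^*\chi$, so the stated equality $|S_{g,d}|=dn$ is not actually proved as written. Your Schur-type argument (via geometric irreducibility of $\mc E$) fills this in, and your remark that the equality can genuinely fail for non-geometrically-irreducible $\mc E$ is correct — the lemma as stated is slightly imprecise on this point. Since only the unramifiedness of Frobenius traces is used downstream (in Theorem~\ref{thm:p-adic_traces_unr}, where $\mc E$ is anyway irreducible), this is cosmetic, but you were right to be suspicious of it.
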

\begin{proof}
    First of all, we claim that $\det(\mc E)^{\otimes d}\in \fisoc{Y}_{\mb{Q}_{p^f}}$ is the pull back of an object of $\fisoc{k}_{\mb{Q}_{p^f}}$. This follows from the fact that the  convergent isocrystal underlying $\det(\mc{E})^{\otimes d}$ is trivial, and the fact that an $F$-isocrystal with trivial underlying isocrystal is pulled back from the base.

    We will first prove the lemma for $g = 0$. Let us first construct $nd$ such rank 1 objects $\chi$. Set $\psi:=\det(\mc E)^{-d}$ and $m=dn$. Let $\chi$ be one of the $dn$ outputs of Proposition \ref{prop:mth_root_fisoc}. Note that $(\mc E\otimes \chi)\in \fisoc{Y}_K$ has determinant of order $d$; indeed, $\det(\mc E\otimes \chi)^d = \det(\mc E)\otimes \chi^{dn}$. Then Proposition \ref{prop:mth_root_fisoc} implies that $(\mc E\otimes \chi)_{fdn}$ has Frobenius traces in a field unramified over $\Qp$.



    On the other hand,  if  $\mc F = \mc{E}\otimes \chi \in \fisoc{Y}_{\Qpbar}$ is a twist with  order $d$ determinant, then  $\chi^{-dn}=(\det{\mc E})^{d}$. Hence Proposition \ref{prop:mth_root_fisoc} implies that the above construction exhausts $S_{g, d}$.

    To prove the lemma for arbitrary $g\in \Z$, pick any $p^{-1/n}\in \Qpbar$ and consider the following rank 1 object $(M,F)$ in $\fisoc{\mb F_p}_{\Qpbar}$, which we call $\Qpbar(1/n)$: $M$ is a free rank 1 vector space over $\Qpbar$ and $F$ is given by multiplication by $p^{-1/n}$. We may define $\Qpbar(g/n):=\Qpbar(1/n)^{\otimes g}$. Then there is a bijection:

    $$S_{0,d}\xrightarrow[]{\otimes \Qpbar(g/n)} S_{g, d}.$$
    By the same argument as in Proposition \ref{prop:mth_root_fisoc}, it follows that the Frobenius trace of the restriction $(\Qpbar(g/n))_{n}\in \fisoc{\mb F_{p^{n}}}_{\Qpbar}$ is in $\Qp$ (in fact, it is exactly $1/p^g$), and the lemma follows.
\end{proof}

Putting the above together, we have the following theorem, which provides a $p$-adic bound on the field of Frobenius traces in the theorem of Esnault-Groechenig. Loosely, it says that for any algebraic $F$-isocrystal that is an output of their theorem, after restricting to a slightly bigger base field, the number field generated by Frobenius traces is unramified over $p$. 

\begin{defn}\label{def:MdR}

Let $X/\C$ be a smooth variety with good compactification $\bar{X}$. Fix integers $n,d,v\geq 1$. Let $(\X, \bar{\X})/S$ be a spreading out, such that $d$ and $v$ are invertible in $S$.

Let $\MdR{X}{\C}$ denote Simpson's moduli space uniformly corepresenting log flat connections on $(X,\bar X)$ of rank $n$, with determinant of order $d$, and whose residues around boundary divisors are rational with denominators dividing $v$ \cite[Theorem 3.8]{simpsonmodI}. Let $\cohrigdR{X}{\C}$ denote the set of smooth isolated points of $\MdR{X}{\C}$.

Let $\MdR{\X}{S}$ denote Langer's quasi-projective space uniformly corepresenting of (Gieseker) stable log flat connections on $(\X, \bar{\X})/S$ of rank $n$, with vanishing Chern classes, with determinant of order $d$ and whose residues around boundary divisors are rational with denominators dividing $v$ \cite[Theorem 1.1]{langersemistable}. Let $\cohrigdR{\X}{S}\subset \MdR{\X}{S}$ denote the maximal open subscheme such that the structure morphism is quasi-finite and smooth (c.f. \cite[Definition 3.2]{eg}).
\end{defn}
\begin{rmk}
    By \cite[Theorem 1.1]{langersemistable}, for any geometric point $\bar{s}\hookrightarrow S$, the set $\MdR{\X_{\bar s}}{\bar s}$ is precisely the set of isomorphism classes of stable log flat connections on $(\X_{\bar s}, \bar{\X}_{\bar s})$ of rank $n$, vanishing Chern classes, determinant of order $d$, and rational residues with denominators dividing $v$. 
\end{rmk} 

\begin{setup}\label{setup:global_dR}
    Fix $X,\bar{X}, n,d,v$ as in the introduction. Let $\MdR{X}{\C}$ be as above and let $(\X, \bar{\X})\rightarrow S$ be a spreading out such that
    \begin{itemize}
        \item $\Z[\zeta_{nd}]\subset\Gamma(S,\mc O_S)$
        \item the morphism $\cohrigdR{\X}{S}\rightarrow S$ is isomorphic to $\coprod S\rightarrow S$\footnote{This concretely implies the following: every $(V_i,\nabla_i)$ in $\cohrigdR{X}{\C}$ has a canonical spreading-out to an $S$-relative flat connection $(\mc V_i, \nabla_i)$ on $\mf X$ whose fibers are all stable and cohomologically rigid.}; and
        \item every element of $\cohrig{X}{\Qlbar}$ spreads out to $\X$ with $d$-torsion determinant (i.e., satisfies the conclusion of Lemma~\ref{lem:spreading_out}).
    \end{itemize}
\end{setup}

\begin{rmk}\label{rmk:global_dR_etale}
    In the context of Setup \ref{setup:global_dR}, the degree of $\cohrigdR{\X}{S}\rightarrow S$ is precisely $|\cohrigdR{X}{\C}|$, which by Deligne's Riemann-Hilbert correspondence is the same as $|\cohrig{X}{\Qlbar}|$.
\end{rmk}
\begin{rmk}
    We comment on why Setup~\ref{setup:global_dR} is possible.
    
    The \'etale part:
    \begin{itemize}
        \item for any spreading out $(\X,\bar{\X})/S$, the scheme $\cohrigdR{\X}{S}\rightarrow S$ is quasi-finite and smooth. Therefore, by Grothendieck's version of Zariski's main theorem, after shrinking $S$ we may assume that the map is finite and smooth, i.e., finite and \'etale. Then replacing $S$ by an appropriate finite \'etale cover will split this cover.
    \end{itemize}
    The de Rham part:\footnote{In the projective case, see \cite[Proposition 4.10]{eg} or \cite[8.5]{elec}.} 
    \begin{itemize}
        
        \item there are only finitely many such flat connections (which follows from Deligne's Riemann-Hilbert correspondence);
        \item stable-ness is an open condition; and
        \item relative (logarithmic) de Rham cohomology satisfies base change when restricted to a open dense subset of $S$ (see e.g. \cite[Theorem 8.0, Corollary 8.4]{katznilp}).
    \end{itemize}
\end{rmk}

The following lemma is a generalization of \cite[Theorem 7.3(1)]{eg} to the quasi-projective case. We claim no originality for this; one simply combines \cite{eg2} with the exact technique of \cite[Setion 7]{eg}
\begin{lem}\label{lem:all_companions_exist}
    Setup as in Setup \ref{setup:global_dR}. Let $(V,\nabla)$ be in $\cohrigdR{X}{\C}$, with canonical spreading out $(\mc V, \nabla)$ over $(\X,\bar{\X})/S$. Let $p>dn$ be a prime.

    For any point $\tilde{s}\colon \Spec(W(\mb F_q))\rightarrow S$, where $q$ is a power of $p$, $(\mc V, \nabla)$ gives rise (non-canonically) to an object  $\mc E\in \fisocd{\X_s}_{\Qpbar}$ with algebraic determinant.

    Then for any $\tau\in \text{Aut}(\Qpbar)$, the $\tau$-companion to $\mc E$ exists, and moreover is constructed from the same process as $\mc E$, i.e., the underlying isocrystal of $^\tau \mc E$ globalizes to a flat connection $(\mc W, \nabla)$ on $(\X, \bar{\X})/S$ corresponding to a section of $$\cohrigdR{\X}{S}\rightarrow S$$.
\end{lem}
\begin{proof}
Let $(\mc V_i,\nabla_i)$ be the canonical spreading-outs of the $$(V_i,\nabla_i)\in\cohrigdR{X}{\C}$$ to $(\X,\bar{\X})/S$. Assume that our given flat connection is $(V_1,\nabla_1)$. Note that by construction of $(\X,\bar{\X}/S)$, for all closed points $s$, the $(\mc V_i,\nabla_i)|_{\X_{\bar s}}$ are pairwise non-isomorphic log flat connections on $(\X,\bar{\X})_{\bar s}$.

For each $i$, let $\{\mc E_{i,j}\}$ be the collection of Frobenius structures on the $p$-adic completion $(\mc V_i,\nabla_i)|_{\X_{\tilde s}}$  (so each $\mc E_{i,j}\in \fisocd{\X}_{\Qpbar}$) with order $d$ determinant; for a given $i$, there are exactly $dn$ of these. (Any two are related by twisting by a rank 1 object of $\fisoc{\mb F_q}_{\Qpbar}$ of order $dn$, of which there are exactly $dn$.) We will argue that $\{\mc E_{i,j}\}$ form a complete set of $p$-adic companions, i.e., for any $\tau\in \text{Aut}(\Qpbar)$, the $^{\tau}\mc E_{i,j}\cong \mc E_{i',j'}$ for some $(i',j')$.

Fix $\iota\colon \Qpbar\rightarrow \Qlbar$. Then $\{^{\iota}\mc E_{i,j}\}$ is a set of mutually non-isomorphic $\Qlbar$-sheaves on $\X_{s}$, and for fixed $i$, the set $\{^{\iota}\mc E_{i,j}\}$ consists of the $dn$-torsion twists of a fixed $\Qlbar$-sheaf. (This in particular means that for fixed $i$, the above sheaves are all isomorphic over $\X_{\bar s}$. Moreover, we know that the relation of $\iota$-companions preserves the property of being cohomologically rigid (again using the $L$-function technique of  \cite[Proof of Theorem 1.1, Lemma 3.4]{egintegral}, with key input Lemma 3.4 of \emph{loc. cit.})

By the choices of $\X/S$, the set $\{^{\iota}\mc E_{i,j}\}$ is closed under the relation of $\ell$-adic companions: companions preserves being cohomologically rigid, and our choice of spreading out implied that every cohomologically rigid $\Qlbar$-sheaf on $\X_{\bar s}$ with the prescribed numerics actually globalizes to $\X$. It follows that $\{\mc E_{i,j}\}$ is closed under the relation of $p$-adic companions, as desired.

\end{proof}

\begin{thm}\label{thm:p-adic_traces_unr}
    Setup as in Setup~\ref{setup:global_dR}. Let $(V,\nabla)$ be a cohomologically rigid de Rham bundle of rank $n$, determinant of order $d$, and rational residues around boundary components with denominators dividing $v$ on $X$, with canonical spreading out to $(\mc V,\nabla)$. Let $p>dn$.
    
    Then for any point $\tilde{s}\colon \Spec(W(\mathbb F_q))\rightarrow S$, where $\mathbb F_q\supset \mathbb F_{p^{f}}$, and for any $\mc E\in \ffisoc{\mf X_{ s}}\cong \fisoc{\mf X_{s}}_{\mb Q_{p^f}}$ such that the underlying convergent isocrystal of $\mc E$ admits $(\mc V, \nabla)$ as a lattice, and for any twist $\mc E'\in \fisoc{\mf X_s}_{\Qpbar}$ of $\mc E$ with $(\det(\mc E')\otimes \Qpbar(g))^{\otimes d}\cong \Qpbar$ for some $g\in \Z$, 
    the number field generated by Frobenius traces of $\mc E'_{fdn}$ is unramified over $p$. 
\end{thm}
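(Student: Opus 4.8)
The plan is to combine Theorem~\ref{thm:EG_p-adic} with Lemma~\ref{lem:twist_p-adic} in the most direct way possible; essentially all the work has already been isolated into those two statements, so the proof is a matter of checking that the hypotheses line up. First I would unwind the situation: by Esnault--Groechenig's Theorem~\ref{thm:EG_p-adic}, since $(V,\nabla)$ is cohomologically rigid, the restriction $(\mc V,\nabla)\mid_{\mf X_{\tilde s}}$ underlies a crystal on $\mathrm{CRYS}(\mf X_s/\Zp)$ equipped with an $F^f$-structure, where $f = f(X,n,d)$ may be taken to be $|\cohrig{X}{\C}|!$; via the Fact relating $F^f$-isocrystals to $F$-isocrystals with coefficients in $\mb Q_{p^f}$ (here we use $\mb F_{p^f}\subset \mb F_q$, which holds by hypothesis), this gives an object $\mc E\in \fisoc{\mf X_s}_{\mb Q_{p^f}}$ whose underlying convergent isocrystal admits $(\mc V,\nabla)$ as a lattice. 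The determinant of this underlying convergent isocrystal has order dividing $d$, because $(V,\nabla)$ has determinant of order $d$ and this property is inherited by the canonical spreading out (Setup~\ref{setup:eg}) and by the underlying convergent isocrystal.

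Next I would invoke Lemma~\ref{lem:twist_p-adic} with $Y = \mf X_s$, the given $f$, the finite field $k = \mb F_q \supset \mb F_{p^f}$, and the object $\mc E$. The numerical hypothesis $nd < p$ required by the lemma is exactly our assumption $p > dn$. The lemma then tells us that for any $g\in\Z$, the set $S_{g,d}$ of isomorphism classes of twists $\mc E' = \mc E\otimes\chi$ (with $\chi$ a rank $1$ object of $\fisoc{k}_{\Qpbar}$) satisfying $\det(\mc E')^{\otimes d}\cong \Qpbar(dg)$ has cardinality $dn$, and crucially that every such $\mc E'$ has the property that the restriction $(\mc E')_{fdn}$ to $\mf X_s\times_{\Spec(\mb F_q)}\Spec(k_{fdn})$ has Frobenius traces in an unramified extension of $\Qp$. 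Since the $\mc E'$ in the statement of Theorem~\ref{thm:p-adic_traces_unr} is precisely a twist of $\mc E$ of this form — one needs only to match the two conventions $\det(\mc E')^{\otimes d}\cong\Qpbar(g)$ versus $\Qpbar(dg)$, which is harmless since $g$ is an arbitrary integer (replace $g$ by $dg$ or absorb the discrepancy into the choice of $\Qpbar(g/n)$ twist as in the proof of Lemma~\ref{lem:twist_p-adic}) — it lies in the relevant $S_{\bullet,d}$, and the conclusion follows immediately: the number field generated by the Frobenius traces of $\mc E'_{fdn}$, being contained in an unramified extension of $\Qp$, is unramified over $p$.

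I do not expect a serious obstacle here; the theorem is genuinely a corollary of the machinery built in the two preceding results, and its proof should be three or four lines. The only points that require a moment's care are bookkeeping ones: (i) confirming that "underlying convergent isocrystal of $\mc E$ admits $(\mc V,\nabla)$ as a lattice" forces the order-$d$ determinant hypothesis of Lemma~\ref{lem:twist_p-adic}, which follows because a lattice and its generic fiber have the "same" determinant isocrystal up to the integral structure, and $\det(\mc V,\nabla)$ has order $d$ by construction; (ii) checking the compatibility of the normalizations of the determinant twist ($\Qpbar(g)$ vs.\ $\Qpbar(dg)$) between the two statements; and (iii) noting that $fdn$ as it appears in the conclusion is exactly the index $fdn$ produced by Lemma~\ref{lem:twist_p-adic}. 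None of these is more than a line of verification.

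\begin{proof}[Proof of Theorem~\ref{thm:p-adic_traces_unr}]
By Theorem~\ref{thm:EG_p-adic} (in the cohomologically rigid case), the restriction $(\mc V,\nabla)\mid_{\mf X_{\tilde s}}$ is the evaluation of a crystal on $\mathrm{CRYS}(\mf X_s/\Zp)$ carrying an $F^f$-structure, with $f$ depending only on $X,n,d$. Since $\mb F_{p^f}\subset \mb F_q$, the Fact identifying $F^f$-isocrystals with $F$-isocrystals with coefficients gives the object $\mc E\in \fisoc{\mf X_s}_{\mb Q_{p^f}}$, which by hypothesis we are given; its underlying convergent isocrystal admits $(\mc V,\nabla)$ as a lattice, and hence has determinant of order dividing $d$, because the canonical spreading out $(\mc V,\nabla)$ has determinant of order $d$ (Setup~\ref{setup:eg}).

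Now apply Lemma~\ref{lem:twist_p-adic} with $Y=\mf X_s$, the integer $f$ as above, the finite field $k=\mb F_q\supset \mb F_{p^f}$, and the object $\mc E$ of rank $n$; the hypothesis $nd<p$ of the lemma holds since $p>dn$. The given $\mc E'$ is a twist $\mc E\otimes\chi$ with $\chi\in\fisoc{k}_{\Qpbar}^{\mathrm{rank}\,1}$ and $\det(\mc E')^{\otimes d}\cong\Qpbar(g)$; after replacing $g$ by $dg$ in the statement of the lemma (or, equivalently, twisting by the appropriate power of $\Qpbar(1/n)$ as in the proof of Lemma~\ref{lem:twist_p-adic}), we see $\mc E'\in S_{g,d}$. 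Therefore, by the conclusion of Lemma~\ref{lem:twist_p-adic}, the restriction $(\mc E')_{fdn}$ has Frobenius traces in an unramified extension of $\Qp$, so the number field they generate is unramified over $p$, as claimed.
\end{proof}
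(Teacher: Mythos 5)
Your proof has a genuine gap at the last step, and it is exactly the point the paper takes pains to address. What Lemma~\ref{lem:twist_p-adic} delivers is that the Frobenius traces of $\mc E'_{fdn}$, regarded as elements of $\Qpbar$, all lie in some fixed finite unramified extension of $\Qp$. This shows that the number field $F$ they generate has \emph{one} $p$-adic place that is unramified, namely the one corresponding to the embedding $F\hookrightarrow \Qpbar$ coming from the coefficients of $\mc E'$. But ``$F$ is unramified over $p$'' means that \emph{every} prime of $F$ above $p$ is unramified, equivalently that every embedding $F\hookrightarrow \Qpbar$ lands in an unramified extension of $\Qp$. A non-Galois number field can easily have one unramified place above $p$ and another ramified one, so the conclusion does not follow from what you proved. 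You wrote that the number field is unramified ``since it is contained in an unramified extension of $\Qp$'' — but being contained in $\Qp^{\rm unr}$ under one embedding does not bound ramification at the other places above $p$.

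The paper's proof closes this gap with a companions argument that your write-up omits entirely. The embeddings $F\hookrightarrow\Qpbar$ are in bijection with the $p$-adic companions of $\mc E'$, so one must show that \emph{every} $p$-adic companion also has Frobenius traces landing in an unramified extension of $\Qp$. The paper does this by observing that any $p$-adic companion of $\mc E'$ is again cohomologically rigid with determinant of order $d$ (standard $L$-function argument), hence comes from the same Esnault--Groechenig/Esnault--de Jong procedure applied to a possibly different cohomologically rigid stable flat connection on $\mf X/S$; Lemma~\ref{lem:twist_p-adic} then applies uniformly to each companion. Only after running over all companions can one conclude that every $p$-adic place of $F$ is unramified. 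You should add this step: identify the embeddings of $F$ with the $p$-adic companions, show each companion is of the same form (rigid, order-$d$ determinant, arising from the canonical spreading out of some $(V_j,\nabla_j)$), and apply Lemma~\ref{lem:twist_p-adic} to each.
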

Note that Theorem \ref{thm:EG_p-adic} implies that the theorem is not vacuous.
\begin{proof}
It follows from Theorem \ref{thm:EG_p-adic} and Lemma \ref{lem:twist_p-adic} that for any such $\mc E'$, the field generated by Frobenius traces of $\mc E'_{fdn}$, as a subfield of $\Qpbar$, is contained in an unramified extension of $\Qp$. This in particular means that the \emph{number field} generated by Frobenius traces has at least one $p$-adic place $\lambda$ that is unramified. It is our goal to that the same holds for every other $p$-adic place; equivalently, that for every $\tau\in \text{Aut}(\Qpbar)$, the field generated by Frobenius of traces of $^{\tau}\mc E'\in \fisocd{X}_{\Qpbar}$, considered naturally as a subfield of $\Qpbar$, is contained in an unramified extension of $\Qp$. This follows immediately from Lemma \ref{lem:all_companions_exist}.

Therefore the number field generated by Frobenius traces of $\mc E'_{fdn}$ is unramified over $p$.
\end{proof}
\begin{cor}\label{cor:fcrystals}
    Let $\mf X \rightarrow S$ be a spreading out satisfying both Setup \ref{setup:global_dR} and also the conclusion in Lemma \ref{lem:spreading_out}. Let $p>nd$.

    Then there exists an $f\geq 1$ with the following property. Let $\mb L$ be a spreading out of $L\in \cohrig{X}{\Qlbar}$ $\Qlbar$-local system. Let $q=p^a$, with $fdn|a$. Then for any point
    $$s\colon \Spec(\mb F_q)\rightarrow S,$$
    the Frobenius trace field of $\mb L|_{\mf X_s}$ is unramified over $p$. In fact, $f$ make be taken to be $|\cohrig{X}{\C}|!$.

\end{cor}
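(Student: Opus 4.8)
The plan is to reduce the statement to Theorem~\ref{thm:p-adic_traces_unr} by replacing $\mb L|_{\mf X_s}$ with a $p$-adic companion. Set $f = |\cohrig{X}{\C}|!$, the constant of Theorem~\ref{thm:EG_p-adic}, and (enlarging $S$ to a common refinement if necessary, which is harmless by Lemma~\ref{lemma:spreadfinitetypebase}) assume that every element of $\cohrig{X}{\Qlbar}$ extends to a local system on $\mf X$ and that Setup~\ref{setup:eg} holds. Fix $q = p^a$ with $fdn \mid a$ and a point $s\colon \Spec(\mb F_q)\to S$. For $p$ in the allowed range $p$ is invertible on $S$, so $W(\mb F_q)$ is an algebra over the base of $S$; since $S$ is smooth over that base, applying the infinitesimal lifting criterion to the square-zero extensions $W_{n+1}(\mb F_q)\twoheadrightarrow W_n(\mb F_q)$ and passing to the limit produces a point $\tilde s\colon \Spec(W(\mb F_q))\to S$ lifting $s$ (one may work affine-locally around $s$, which suffices since only $\mf X_s$ matters).

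Next I would produce the $F$-isocrystal to feed into Theorem~\ref{thm:p-adic_traces_unr}. The restriction $\mb L|_{\mf X_s}$ is geometrically irreducible --- the specialization map $\piET{\mf X_{\bar\eta}}\cong\piET{X}\twoheadrightarrow\piET{\mf X_{\bar s}}$ is surjective and geometric irreducibility descends along surjections --- and has algebraic determinant (an integral power of the cyclotomic character times a $d$-torsion character), so by the companions formalism of Section~\ref{section:companions} it admits, for a suitable field isomorphism $\tau\colon\Qlbar\to\Qpbar$, a $p$-adic companion $\mc E'\in\fisoc{\mf X_s}_{\Qpbar}$. Applying $\tau$ to $\det(\mb L)^{\otimes d}\cong\Qlbar(m)$ gives $\det(\mc E')^{\otimes d}\cong\Qpbar(m)$, so the determinant hypothesis of Theorem~\ref{thm:p-adic_traces_unr} holds with $g=m$. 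Moreover $\mc E'$ is cohomologically rigid, since the standard $L$-function argument shows companions preserve the ranks of cohomology groups and since cohomological rigidity does not depend on the Frobenius structure. Hence by \cite[Theorem~7.3(1)]{eg}, the content of which is spelled out in the proof of Theorem~\ref{thm:p-adic_traces_unr}, the underlying convergent isocrystal of $\mc E'$ is $(\mc V,\nabla)|_{\mf X_s}$ for some cohomologically rigid de Rham bundle $(V,\nabla)$ on $X$ with canonical spreading out $(\mc V,\nabla)$; and Theorem~\ref{thm:EG_p-adic} applied to $(V,\nabla)$ and $\tilde s$ endows $(\mc V,\nabla)|_{\mf X_s}$ with an $F^f$-structure, i.e.\ with the structure of an object $\mc E\in\ffisoc{\mf X_s}\cong\fisoc{\mf X_s}_{\mb Q_{p^f}}$ (using $\mb F_{p^f}\subset\mb F_q$). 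As this common underlying isocrystal is irreducible, its automorphism group is scalar, so $\mc E'$ and $\mc E$ agree up to twisting by a rank $1$ object of $\fisoc{\mb F_q}_{\Qpbar}$; that is, $\mc E'$ is a twist of $\mc E$ in the sense of Theorem~\ref{thm:p-adic_traces_unr}.

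Theorem~\ref{thm:p-adic_traces_unr} now applies verbatim to $\mc E'$ (with the stated $f$ and our hypothesis $p>nd$), and since $fdn\mid a$ we have $(\mc E')_{fdn}=\mc E'$ already on $\mf X_s$; therefore the number field generated by the Frobenius traces of $\mc E'$ is unramified over $p$. Finally, $\mc E'$ is a $\tau$-companion of $\mb L|_{\mf X_s}$, so $\tau$ restricts to a $\mb Q$-algebra isomorphism from the Frobenius trace field $F_s$ of $\mb L|_{\mf X_s}$ onto the Frobenius trace field of $\mc E'$; since being unramified over $p$ is an intrinsic property of a number field, $F_s$ is unramified over $p$, as claimed.

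The only step carrying real content --- everything else being formal --- is the identification of the underlying isocrystal of the $p$-adic companion $\mc E'$ with one of the finitely many crystalline reductions $(\mc V,\nabla)|_{\mf X_s}$ of cohomologically rigid de Rham bundles. This is exactly what is established in the proof of Theorem~\ref{thm:p-adic_traces_unr} (via the facts that Frobenius pullback permutes the finite set of cohomologically rigid objects of $NCRYS(\mf X_s/\Zp)$, and that these are precisely the reductions of the rigid de Rham bundles on $X$ that globalize over $S$), so for the present corollary it may simply be invoked.
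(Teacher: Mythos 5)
Your argument is essentially the intended one (the paper gives no explicit proof of the corollary, leaving it as an application of Theorem~\ref{thm:p-adic_traces_unr}), and the overall structure is sound: lift $s$ to a $W(\mb F_q)$-point $\tilde s$ via formal smoothness of $S$, pass to the $p$-adic companion $\mc E'$, identify its underlying convergent isocrystal with a reduction $(\mc V,\nabla)|_{\mf X_s}$ of a rigid de Rham bundle, realize $\mc E'$ as a rank-one twist of the EG isocrystal $\mc E$ by irreducibility, observe $(\mc E')_{fdn}=\mc E'$ since $fdn\mid a$, apply Theorem~\ref{thm:p-adic_traces_unr}, and transport unramifiedness back through $\tau$.

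The one point you should tighten is the attribution for the \emph{existence} of the $p$-adic companion $\mc E'$. You write that $\mb L|_{\mf X_s}$ admits a $p$-adic companion ``by the companions formalism of Section~\ref{section:companions}''; but that section only asserts existence of $\ell'$-adic companions for $\ell'\neq p$ (Lafforgue--Deligne--Drinfeld), and Subsection on $p$-adic companions merely gives the definition, not an existence theorem. For a higher-dimensional $\mf X_s$, producing a crystalline companion of an $\ell$-adic sheaf is nontrivial; in the paper's setting it is a theorem of Esnault--Groechenig for cohomologically rigid local systems (this is exactly the role of \cite[Theorem~7.3]{eg}, which you already invoke for the identification of the underlying isocrystal, and which the paper's proof sketch points to with ``these were shown to exist by Esnault--Groechenig''). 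Replacing the appeal to the general formalism with a citation of \cite{eg} closes the only loose joint; everything else in your proposal is correct.
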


\subsection{Proof of Theorem~\ref{thm:main}}
\begin{proof}
Parts (1) and (2)  of Theorem~\ref{thm:main} were proven in Sections~\ref{section:degree} and \ref{section:unramified} respectively, so it remains to prove part (3). 

Let $(\X, \bar{\X}, \mc{L}, S, \iota)$ be as in the statement of Theorem~\ref{thm:main}. 

By Corollary~\ref{cor:fcrystals}, there exists\footnote{more precisely, we can take a spreading out of $X$ to a $S'$ which lies over the spreading out to $S$ as well as the one in Setup~\ref{setup:global_dR}}  a finite type scheme $S'$, equipped with a map $S'\rightarrow S$ such that $(\X_{S'}, \mc{L}_{S'})$ satisfies  the conclusion of points (1), (2)   of Theorem~\ref{thm:main}, as well as the conclusion of Corollary~\ref{cor:fcrystals}. 

More precisely,  there exists  $\beta\in \mb{Z}_{\geq 1}$ such that for all $s: \Spec(\mb{F}_q)\rightarrow S'$ such that $\mb{F}_{p^{\alpha}}\subset \mb{F}_q$, $\mc{L}_{\X_{S'}}|_{\X_{S'}\otimes s}$  has trace field is
\begin{itemize}
    \item of degree $\leq N=N(X, n, d)$, and 
    \item unramified at any prime $m>\ell_0=\ell_0(\X, n,d)$.
\end{itemize}
Since the set of such number fields is finite \cite[Ch. II Theorem 2.13]{neukirch2013algebraic}, the local system $\mc{L}_{S'}$ satisfies the conclusion of point (3) of Theorem~\ref{thm:main}. Taking an \'etale map $T\rightarrow  S$ such that the induced morphism $S'_T \defeq S'\times_{S} T\rightarrow T$ has a section, we may conclude.
\end{proof}

\section{Corollaries of the main theorem}\label{section:adjointquestion}

\subsection{The adjoint representation}\label{section:adjoint}
\begin{prop}\label{prop:adjoint}
    For $L\in \cohrig{X}{\Qlbar}$, there exists a spreading out $(\X, S, \mc{L}, \iota)$ such that $\ad(\mc{L})$ has bounded Frobenius traces.
\end{prop}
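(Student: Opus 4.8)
The plan is to run the argument of Theorem~\ref{thm:main} with the adjoint local system $\ad(\mc L)=\mc L\otimes\mc L^\vee$ in place of $\mc L$, where every step becomes softer: the twisting ambiguities (by characters of $G_{\kappa(s)}$ when $\ell\neq p$, and by rank one $F$-isocrystals from the base when $\ell=p$) that make $\mc L$ itself delicate are invisible to $\ad(\mc L)$, since $\ad(M\otimes\psi)\cong\ad(M)$ for any rank one $\psi$ pulled back from $\Spec(\kappa(s))$. First I would fix a spreading out $(\X,\mc L,S,\iota)$ as in Lemma~\ref{lemma:spreadfinitetypebase}, refined so that it also dominates one as in Setup~\ref{setup:eg}, and for a closed point $s$ with residue characteristic $p$ I would write $M\defeq\mc L|_{\X_s}$ and $F_s^{\ad}$ for its adjoint Frobenius trace field. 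A preliminary observation, used throughout, is that $\mc L|_{\X_{\bar s}}$ is cohomologically rigid for \emph{every} $s$: this is proper smooth base change applied to the lisse sheaf $\mathrm{End}^0(\mc L)$ on $\X\to S$, since $R^1f_*\mathrm{End}^0(\mc L)$ is lisse on the connected scheme $S$ and vanishes on the geometric generic fibre by cohomological rigidity of $L$.

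The degree and the away-from-$p$ ramification are then immediate adaptations. The $\ell$-companions of $\ad(M)$ are the adjoints of the $\ell$-companions of $M$, whose restrictions to $\X_{\bar s}$ are cohomologically rigid by Lemma~\ref{lemma:companionclosed}, hence lie in the finite set $\cohrig{\X_{\bar s}}{\Qlbar}\hookrightarrow\cohrig{X}{\Qlbar}$ of Lemma~\ref{lemma:biject}; exactly as in the proof of Theorem~\ref{thm:main}(1) this bounds $[F_s^{\ad}:\Q]$ by a constant $N=N(X,n,d)$. Likewise, for a prime $m\neq p$ larger than the constant $N_2(X,n,d)$ of Lemma~\ref{lemma:biject}(2), an $m$-companion of $M$ restricts on $\X_{\bar s}$ to a cohomologically rigid local system defined over an unramified extension of $\Q_m$, and by Deligne's descent (Proposition~\ref{prop:delignedescend}) it descends to $\X_s$ up to a twist by a character of $G_{\kappa(s)}$; since that twist disappears under $\ad$, the $m$-companion of $\ad(M)$ already has coefficients in an unramified extension of $\Q_m$, so $F_s^{\ad}$ is unramified at $m$.

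The key point is ramification at $p$, and here is precisely where the adjoint behaves far better than $\mc L$. For $p$ outside a finite set, a $p$-adic companion $\mc E$ of $M$ is an overconvergent $F$-isocrystal whose underlying convergent isocrystal $\mathcal I$ is cohomologically rigid, hence is the canonical reduction of one of the finitely many rigid de Rham bundles of Setup~\ref{setup:eg}; moreover $\mc E$ differs from the $F^f$-isocrystal produced by Theorem~\ref{thm:EG_p-adic} (with $f=|\cohrig{X}{\C}|!$, so that Frobenius pullback fixes the isomorphism class of $\mathcal I$) by a twist from $\Spec(\kappa(s))$. The Frobenius structure in Theorem~\ref{thm:EG_p-adic} is well defined only up to a scalar, which is exactly the source of the ramification in Remark~\ref{rmk:sq_root}; but this scalar, and the twist, both cancel in $\ad(\mc E)$, so $\ad(\mc E)$ is $\mathrm{End}(\mathcal I)$ equipped with the adjoint of \emph{any} Frobenius structure on $\mathcal I$. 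Since $\mathcal I$ is absolutely irreducible, the space of Frobenius structures $F^{f*}\mathcal I\xrightarrow{\ \sim\ }\mathcal I$ is one-dimensional, hence nonzero already over the unramified field $W(\kappa(s))[1/p]$ over which $\mathcal I$ is defined; so $\ad(\mc E)$ is defined over $W(\kappa(s))[1/p]$ and its Frobenius traces are unramified over $\Q_p$. Running over all $p$-adic companions, $F_s^{\ad}$ is unramified at $p$ --- with no divisibility hypothesis on $\kappa(s)$, in contrast to Theorem~\ref{thm:main}(3).

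Combining the three steps, $F_s^{\ad}$ is, for every closed point $s$, a number field of degree at most $N$ unramified outside a fixed finite set of primes; by finiteness of number fields of bounded degree and discriminant, only finitely many such fields occur, and taking $E$ to be their compositum gives $F_s^{\ad}\subseteq E$ for all $s$, i.e.\ $\ad(\mc L)$ has bounded Frobenius traces. The step I expect to need the most care is the ramification at $p$: one must verify that the adjoint genuinely launders away the non-canonical Frobenius scalar, that is, that the adjoint Frobenius structure on the cohomologically rigid convergent isocrystal is canonical and defined over the unramified coefficient field --- which comes down to absolute irreducibility of $\mathcal I$ together with the descent of a one-dimensional $\Hom$ space to any subfield of definition. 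The remaining steps are routine variants of Sections~\ref{section:degree}, \ref{section:unramified}, and~\ref{section:fcrystals}.
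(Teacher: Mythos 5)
Your proposal is correct in outline, but it takes a genuinely different and considerably longer route than the paper. The paper's own proof is a two-line deduction: Theorem~\ref{thm:main}(3) gives a spreading out whose \emph{stable} trace fields $\tilde F_s$ are all contained in one number field $E$, and Lemma~\ref{lem:trace_adjoint} says the Frobenius trace field of $\ad(\mc L|_{\X_s})$ is contained in $\tilde F_s$. The content is all in Lemma~\ref{lem:trace_adjoint}, which is proved by a short $\tau$-companion argument: if some $\tau\in\Aut(\Qlbar)$ fixes the stable trace field but not the adjoint trace field, then $^\tau L\cong L\otimes\psi$ on the original base for some character $\psi$, whence $^\tau\ad(L)\cong\ad(L)$, contradiction. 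What you are doing instead is re-running the full machinery of Sections~\ref{section:degree}--\ref{section:fcrystals} with $\ad(\mc L)$ in place of $\mc L$. Your approach has the conceptual virtue of showing concretely why the adjoint is ``twist-blind'' at every step, and you correctly observe the bonus that for $\ad$ the divisibility hypothesis on $\kappa(s)$ in Theorem~\ref{thm:main}(3) can be dropped; but you are re-deriving a statement the paper already has, namely that the adjoint trace field lands in the stable trace field, by a harder route.

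Two points in your $p$-adic step need more care than you give them, though both are fixable. First, Theorem~\ref{thm:EG_p-adic} produces an $F^f$-structure, while $\ad(\mc E)$ carries an honest $F$-structure; a one-dimensional space of maps $F^{f*}\mathcal I\xrightarrow{\ \sim\ }\mathcal I$ over $\Q_{p^a}$ does not by itself descend the $F$-structure on $\ad(\mc E)$. The cleaner version of your argument is: the companion $\mc E$ forces $F^*\mathcal I\cong\mathcal I$ over $\Qpbar$; by absolute irreducibility and the Brauer--Nesbitt descent used in Proposition~\ref{prop:descent}, this isomorphism already exists over the $\Q_p$-linear category where $\mathcal I$ lives, and its adjoint is canonical (independent of the scalar ambiguity), so $\ad(\mc E)$ descends to $\Q_p$-coefficients. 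Second, you tacitly assume that the underlying convergent isocrystal $\mathcal I$ of a $p$-adic companion $\mc E$ is (after forgetting coefficients) one of the finitely many isocrystals produced by the Esnault--Groechenig construction of Setup~\ref{setup:eg}; this is exactly the ``obstruction'' the paper flags and addresses in the proof of Theorem~\ref{thm:p-adic_traces_unr} via \cite[Proposition 4.10 and Theorem 7.3(1)]{eg}, and it needs to be cited rather than asserted. With these two repairs, your argument goes through.
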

The following is the key lemma.

\begin{lem}\label{lem:trace_adjoint}
    Let $Y/\mb F_q$ be a smooth variety  and let $L$ be an irreducible $\Qlbar$ local system with finite order determinant on $Y$. Then the Frobenius trace field of $\ad(L)$ is a subfield of the stable trace field of $L$.
\end{lem}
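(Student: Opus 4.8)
The plan is to exploit the fact that the adjoint local system $\ad(L) = L \otimes L^\vee$ is \emph{canonically independent} of any twisting of $L$ by a rank-one local system pulled back from $\Spec(\mb F_q)$, and that the stable trace field is precisely the obstruction to such twists. First I would recall from Proposition~\ref{prop:delignedescend} (in the form used in Section~\ref{section:unramified}) that since $L$ is geometrically irreducible with finite order determinant, it is determined by the pair $(V, \Phi)$ consisting of the restriction $V = L|_{\piET{\bar Y}}$ together with a $\piET{\bar Y}$-equivariant isomorphism $\Phi \colon F^* V \simeq V$; any other choice of $\Phi$, by Schur's lemma (absolute irreducibility of $V$ follows from geometric irreducibility of $L$), differs from $\Phi$ by a scalar $c \in \Qlbar^\times$, and scaling $\Phi$ by $c$ changes $L$ by the rank-one Weil sheaf $\psi_c$ on $\Spec(\mb F_q)$ with geometric Frobenius acting by $c$.

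Next I would observe that $\ad(L) = L \otimes L^\vee$ is insensitive to this scalar: replacing $\Phi$ by $c\Phi$ replaces $L$ by $L \otimes \psi_c$, and $(L \otimes \psi_c) \otimes (L \otimes \psi_c)^\vee \cong L \otimes L^\vee \otimes \psi_c \otimes \psi_c^{-1} \cong \ad(L)$. Concretely this means that for every closed point $y \in Y$ of degree $\deg(y)$ over $\mb F_q$, the Frobenius trace $\Tr(\Frob_y \mid \ad(L)) = |\Tr(\Frob_y \mid L)|^2$-type expression is a ratio $a_y \bar a_y$ where $a_y = \Tr(\Frob_y \mid L)$; more precisely it is $\Tr(\Frob_y \mid L)\cdot \Tr(\Frob_y \mid L^\vee)$, and after twisting by $\psi_c$ the eigenvalue scaling by $c^{\deg y}$ on $L$ cancels the scaling by $c^{-\deg y}$ on $L^\vee$. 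Thus the Frobenius trace field of $\ad(L)$ equals the Frobenius trace field of $\ad(L \otimes \psi)$ for \emph{any} rank-one Weil sheaf $\psi$ on $\Spec \mb F_q$.

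Then I would bring in the characterization of the stable trace field: by Chebotarev/$L$-function arguments the stable trace field $\tilde F$ of $L$ is the fixed field of the subgroup of $\Aut(\Qlbar/\mb Q)$ consisting of those $\tau$ for which the $\tau$-companion $^\tau L$ becomes isomorphic to $L$ after restriction to $\piET{\bar Y}$ — equivalently, for which $^\tau L \cong L \otimes \psi_\tau$ for some rank-one character $\psi_\tau$ of $G_{\mb F_q}$. For such a $\tau$, applying $\tau$ to the Frobenius traces of $\ad(L)$ gives the Frobenius traces of $\ad(^\tau L) = \ad(L \otimes \psi_\tau) \cong \ad(L)$ by the previous paragraph; hence $\tau$ fixes the trace field of $\ad(L)$. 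Therefore the Frobenius trace field of $\ad(L)$ is contained in the fixed field of all such $\tau$, which is exactly $\tilde F$. I expect the main technical point — and the step to handle carefully — to be making precise the equivalence between "$\tau$ fixes the stable trace field" and "$^\tau L$ and $L$ agree on $\piET{\bar Y}$ up to a base character," which is the content of the Proposition immediately preceding this lemma in the excerpt; once that is in hand the rest is a short formal manipulation with companions and the observation that $\ad$ kills base twists.
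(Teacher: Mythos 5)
Your proof is correct and takes essentially the same approach as the paper. The paper argues by contradiction: assuming there is some $\lambda$ in the trace field of $\ad(L)$ but not in the stable trace field $E'$ of $L$, it chooses $\tau \in \Aut(\Qlbar)$ fixing $E'$ but moving $\lambda$, uses the Weil II descent (Proposition~\ref{prop:delignedescend}) plus Schur's lemma to write ${}^{\tau}L \cong L \otimes \psi$ for a base character $\psi$, and then deduces $\ad({}^{\tau}L) \cong \ad(L)$, contradicting that $\tau$ moves $\lambda$. You run the same three ingredients — the companion-theoretic characterization of the stable trace field (which the paper isolates as the proposition immediately preceding the lemma), the descent to a $(V,\Phi)$ pair with $\Phi$ unique up to scalar by Schur, and the cancellation of base twists under $\ad$ — just packaged as a direct argument instead of a contradiction. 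There is no meaningful mathematical difference.
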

\begin{proof}

Set $E'\subset E\subset \Qlbar$ to be the stable trace field inside of the Frobenius trace field of $L$ and $K'\subset K\subset \Qlbar$ to be the stable trace field inside of the Frobenius trace field of $\ad(L)$. Of course, $K\subset E$ and $K'\subset E'$, and our goal is to prove that $K\subset E'$.

If $E=E'$, then we are done, so we may assume that $E'$ strictly contains $E$; we may suppose that $E'$ is in fact the Frobenius trace field of the restriction of $L$ to $Y':=Y\times_{\mb F_q}\mb F_{q'}$ for some power $q'$ of $q$. Furthermore, we may assume that there is $\lambda\in K\subset E$ which is not in $E'$.

Then there exists $\tau\in \text{Aut}(\Qlbar)$ with the following properties:
\begin{enumerate}
    \item $^{\tau}L$ is not isomorphic to $L$ and $^{\tau}\ad(L)$ is not isomorphic to $\ad(L)$
    \item $^{\tau}L$ and $L$ are isomorphic on $Y'$, and similarly with $\ad(L)$.
\end{enumerate}
Note that the operation of $\tau$-companions commutes with tensor product and dual, so $^{\tau}\ad(L)\cong \ad(^{\tau}L)$.

On the other hand, as $^{\tau}L$ and $L$ are isomorphic over $Y'$, it follows that there exists a character $\psi$ such that $^{\tau}L\cong L\otimes \psi$. It follows that $\ad(^{\tau}L)\cong \ad(L)$, 
which is a contradiction. Therefore, such a $\lambda$ cannot exist and $K\subset E'$, as desired. 
\end{proof}

\begin{proof}[Proof of Proposition \ref{prop:adjoint}]
    By Theorem \ref{thm:main}, there exists a spreading out $(\X,S, \mc L, \iota)$ and a number field $E$ such that the stable trace field of $\mc L|_{\X_s}$ is contained in $E$. It follows from Lemma \ref{lem:trace_adjoint} that the Frobenius trace field of $\ad(\mc L)$ is contained in $E$, as desired.
\end{proof}
\subsection{Boundedness for $L^{\oplus h}$}
The goal of this section is the prove the following.
\begin{thm}\label{thm:L^h_bounded}
     For $L\in \cohrig{X}{\Qlbar}$, there exists an integer $h$ and a  spreading out of $L^{\oplus h}$ with bounded Frobenius traces.
\end{thm}
We first require the following lemma, analogous to Lemma~\ref{lem:trace_adjoint}.
\begin{lem}\label{lem:L^h_stable}
    
    Let $Y/\mb F_q$ be a smooth variety and let $L$ be an irreducible $\Qlbar$ local system with determinant finite order. Let $y\in Y(\mb F_q)$. Then the Frobenius trace field of $L\otimes L_y^{\vee}$ is a subfield of the stable trace field of $L$.
\end{lem}
The idea to consider the tensor product $L\otimes L_y^{\vee}$ is essentially due to Kerz \cite[Lemma 6.2]{sasha}.
\begin{proof}
    The proof is analogous to Lemma~\ref{lem:trace_adjoint}. Let $E'\subset E\subset\Qlbar$ be the stable trace field inside of the Frobenius trace field of $L$. Then the trace field of $L\otimes L_y^{\vee}$ is a subfield $K\subset E$, and it is our goal to prove that $K\subset E'$.

    As before, we may assume that $E'\neq E$. Suppose for contradiction that $K\not\subset E'$. Take $\tau\in \text{Aut}(\Qlbar)$ that is the identity on $E'$ but non-trivial on $K$. 
    Our goal will be to prove that the $\tau$-companion to $L\otimes L_y^{\vee}$ is again isomorphic to $L\otimes L_y^{\vee}$, which will contradict the choice of $\tau$ and hence the assumption that $K\not\subset E'$.
    
    Note that $^{\tau}L\cong L\otimes \chi$, where $\chi$ is a rank 1 $\Qlbar$-local system on $\Spec(\mb F_q)$, as $\tau$ fixes $E'$ and $L$ is irreducible.
    Therefore, $^\tau(L\otimes L_y^{\vee})\cong L\otimes \chi \otimes L_y^{\vee}\otimes \chi_y^{\vee}$. As $\chi$ is pulled back from $\mb F_q$, it follows that $^{\tau}(L\otimes L_y^{\vee})\cong L\otimes L_y^{\vee}$, contradicting our assumption on $\tau$ as explained above. Therefore $K\subset E'$,  as desired. 
\end{proof}
\begin{proof}[Proof of Theorem~\ref{thm:L^h_bounded}]
    Set $h=n$, where $n$ is the rank of $L$. Let $(\mf X, \mc L, S, \iota)$ be a spreading out as in Theorem~\ref{thm:main}(\ref{mainthm:3}). After possibly replacing $S$ by a scheme \'etale over it, we may assume that the map $\mf X\rightarrow S$ has a section $s\colon S\rightarrow \mf X$. Consider $\mc L\otimes \mc L^{\vee}_{s}$ as a local system on $\mf X$. (Here, $\mc L^{\vee}_{s}$ is a local system on $S$, and we view it as a local system on $\X$ via pullback along the structure map.) This is clearly a spreading-out of $L^{\oplus n}$, so the task is to prove that the field generated by Frobenius traces is bounded. Theorem~\ref{thm:main} implies that  $\mc L$ has bounded stable Frobenius traces; combining with Lemma~\ref{lem:L^h_stable}, the result follows.
\end{proof}

\subsection{Rigidifying}\label{section:rigidify}

Recall that it cannot be true that the field of Frobenius traces of an \emph{arbitrary} spreading-out of a rigid local system is bounded; indeed, twisting \emph{does not} preserve this boundedness. Esnault suggested to us a formulation to rigidify the problem.
\begin{setup}\label{setup:rigidifying}
    Let $\mf X/S$ be a smooth scheme over an affine scheme $S=\Spec(A)$, where the structure map $\Z\rightarrow A$ is smooth and $A\subset \C$. Let $\mc L$ be a geometrically irreducible $\Qlbar$-local system on $\mf X$ with fractional cyclotomic determinant.
\end{setup}
\begin{thm}
    Notation as in Setup \ref{setup:rigidifying}. Suppose further that
    \begin{enumerate}
        \item $\mc L$ has bounded stable Frobenius trace field, i.e., there exists a number field $E_1\subset \Qlbar$ such that the stable Frobenius trace field is contained in $E_1$; and
        \item there exists a section $\sigma\colon S\rightarrow \mf X$ such that $\sigma^* \mc L$ has bounded coefficients of characteristic polynomials of Frobenius, i.e., there exists a number field $E_2$ such that for every closed point $s$ of $S$, the characteristic polynomial $P_s(\sigma^* \mc L, t)\in E_2[t]\subset \Qlbar[t]$. 
    \end{enumerate}
    Then there exists an \'etale map $S'\rightarrow S$ with the following property. Denote by $\mf X'$ the base-change of $\mf X\rightarrow S$ to $S'$. Then the Frobenius trace field of $\mc L|_{\mf X'}$ is contained in the compositum $E:=E_1\cdot E_2\subset \Qlbar$. In particular, $\mc L|_{\mf X'}$ has bounded Frobenius traces.
\end{thm}
\begin{proof}
    We first construct $S'\rightarrow S$ when $\ell\geq 3$. The local system $\sigma^*\mc L$ corresponds to a representation $\piET{S}\rightarrow \text{GL}_n(M)$, where $M$ is an $\ell$-adic local field. By compactness, we may (non-canonically) pick a stable lattice to obtain a representation $\piET{S}\rightarrow \text{GL}_n(\mc O_M)$. Then there is a cover $S'\rightarrow S$ corresponding to the open subgroup $\Gamma(\ell^n)\subset \text{GL}_n(\mc O_M)$ of matrices that are congruent to the identity mod $\ell^n$. (When $\ell=2$, we trivialize the representation modulo $\ell^{2n}$.)

    We replace $\mf X$, $S$, and $\sigma$ with their base-changes to $S'$, and may therefore assume that $\sigma^*\mc L$ is trivial mod $\ell^n$.
    
    Pick a closed point $s$  of $S$ (with residue field denoted by $\mb F_q$). Then the characteristic polynomial of $\sigma^*\mc L|_s$ is congruent to $(t-1)^n$ modulo $\ell^n$. 
    Note that $\sigma(s)\in \mf X_s(\mb F_q)$, and moreover that $\mc L|_{\sigma(s)}$ has Frobenius eigenvalues that are all congruent to $1\mod{\ell}$. The result now follows from Lemma~\ref{lem:rigidified}.
\end{proof}

\subsection{The case of bad reduction}

In this subsection, we study the case of local systems with bad reduction, specifically with reference to Definition \ref{def:jordan_block_mult_1}. To explain our results, we need to recall what it means for a local system to be \emph{de Rham at $\ell$}.

\begin{defn}
    Let $X/k$ be a smooth variety over a finitely generated field of characteristic 0 and let $L$ be an $\Qlbar$-local system on $X$. We say that $L$ is \emph{de Rham at all places above $\ell$} (or, more simply, \emph{de Rham at $\ell$}) if for every $\ell$-adic local field $K$ and for every embedding $k\hookrightarrow K$, the restriction $L|_{X_K}$ is de Rham at $\ell$ in the sense of \cite[Definition 2.2]{sasha}.
\end{defn}
We prove two main results: if $\mc L$ is an (geometrically irreducible) arithmetic $\Qlbar$-local system whose monodromy around a boundary component has a Jordan block of multiplicity 1 and generalized eigenvalue $\in\pm 1$, then there is a twist $\mc L(\chi)$ such that 
\begin{enumerate}

    \item the stable Frobenius trace field coincides with the Frobenius trace field; and
    \item $\mc L(\chi)$ is geometric in the sense of Fontaine-Mazur, i.e., is de Rham at all places above $\ell$.
\end{enumerate}
An example of the above boundary condition is a local system with totally degenerate unipotent reduction. These two results suggest to us that arithmetic local systems with  such conditions at the boundary are in fact strongly of geometric origin.

In particular, this shows us that cohomologically rigid local systems that have maximal unipotent degeneration admit a spreading out with bounded Frobenius traces.
\begin{setup}\label{setup:degen}
    Let $X/\C$ be a smooth variety with good compactification $\bar{X}$ and simple normal crossings boundary divisor $Z=\cup Z_i$. Let $L$ be a topological $\Qlbar$-local system with finite order determinant on $X$ with the following property: the local monodromy of $L$ around one of the boundary components has a Jordan block of multiplicity 1 and generalized eigenvalue $\pm 1$.

    Let $(\mf X, \bar{\X}, \mc L)/S$ be a spreading out, where $S=\Spec(A)$ with $A\subset \C$ and $\Z\hookrightarrow A$  smooth and $\mc L$ has fractional cyclotomic determinant.

\end{setup}

\begin{thm}

Notation as in Setup~\ref{setup:degen}. Then after potentially shrinking $S$, then there exists a twist $\mc L(\chi)$ such that the following holds.
\begin{enumerate}
    \item\label{part:bad_reduction_trace_field} The Frobenius trace field and the stable Frobenius trace field of $\mc L(\chi)$ coincide. In particular, if the stable Frobenius trace field of $\mc L(\chi)$ is bounded, then the trace field of $\mc L(\chi)$ is bounded. 
    \item\label{part:bad_reduction_dR} $\mc L(\chi)$ is de Rham at $\ell$. 
\end{enumerate}
\end{thm}
\begin{proof}
    First, shrink $S$ to ensure that the local system is tame at $\mf X_s$ for all closed points $s$. We claim both questions reduce to the case of curves. Indeed, for \ref{part:bad_reduction_trace_field}, it follows from  Lemma~\ref{lem:tame_lefschetz_trace} that question reduces a sufficiently ample curve $\bar C$ on $\bar X$ that intersects the boundary transversally. On the other hand, for \ref{part:bad_reduction_dR}, the reduction to curves is an immediate corollary of \cite{liuzhu}. 

    We therefore may assume that $\bar X$ is a curve. Let $\infty\in Z$ be a point such that $L$ has a Jordan block of multiplicity 1 and generalized eigenvalue $\lambda=\pm 1$. Then, by specialization of the tame fundamental group, by shrinking $S=\Spec(A)$ further, for every closed point $s$ of $S$,  the (geometric) local monodromy around $\infty$ of local system $\mc L_{\bar s}$ on $\mf X_{\bar s}$ has a Jordan block of multiplicity 1 and generalized eigenvalue $\lambda$.

    Let $z_{\infty}$ be a local parameter for $\infty\in \bar{\mf X}$ and write $\mc L_{\infty}$ for the resulting local system on $A((z_{\infty}))$. The $\lambda$-eigenline $\Qlbar\cdot\vec{v}\subset \mc L_{\infty}$ is preserved by $\Gal(\C((z_{\infty})))$, which means that $\piET{\Spec(A((z_{\infty})))}$ acts on $\Qlbar\cdot \vec{v}$ by a character $\psi$ of $\piET{\Spec(A)}$. Set $\chi=\psi^{-1}$.
    
    Now, by the explicit construction of Corollary~
    \ref{cor:mult_one_trace_field}, the local system $\mc L(\chi)$  satisfies (\ref{part:bad_reduction_trace_field}) of our theorem. To prove (\ref{part:bad_reduction_dR}), first we may assume that our local system $\mc L(\chi)$ has coefficients in an $\ell$-adic local field $M$. We must show that if $k=\text{Frac}(A)$, then for every $\ell$-adic local field $K$ and every embedding $k\hookrightarrow K$, the local system $\mc L_{\mf X_K}$ is de Rham at $\ell$.

    It follows from \cite{sasha} that for each choice $k\hookrightarrow K$, there exists a character $\varepsilon\colon \Gal(K)\rightarrow \Qlbar$ such that $\mc L_{\mf X_K}(\varepsilon)$ is de Rham. Our goal is to prove that the character $\varepsilon\otimes \chi^{-1}_K\colon G_K\rightarrow \Qlbar$ is de Rham. At this point, we pick an $\ell$-adic local field $M$ such that $\mc L$, $\chi$, and $\varepsilon$ all have coefficients in $M$.

    Let $R\Psi$ denote the nearby cycles functor with respect to the point $\infty$ and a choice of \'etale local coordinate around it. Then, by \cite[Corollary 4.3.4]{diao2023logarithmic}, $R\Psi$ sends de Rham local systems to de Rham local systems. Therefore $R\Psi(\mc L) \otimes \varepsilon$ is a de Rham representation of $G_K$. On the other hand, $R\Psi(\mc L)\otimes \chi$ is a representation of $G_K$ with an  invariant line. 
    Therefore, we reduce to the following Lemma \ref{lem:inv_line_dR}.
\end{proof}

\begin{lem}\label{lem:inv_line_dR}
    Let $K$ and $M$ be $\ell$-adic local fields, and let $\rho\colon G_K\rightarrow \text{GL}_n(M)$ be a continuous representation which has an invariant line. Suppose further that there exists a character $\psi\colon G_K\rightarrow M^{\times}$ such that $\rho\otimes \psi$ is de Rham. Then $\rho$ is de Rham.
\end{lem}
\begin{proof}
    It suffices to show that $\psi$ is de Rham, and since it is a character, this is equivalent to showing that it is Hodge-Tate \cite[Exercise 6.4.3]{brinonconrad}. For each embedding $\tau: M \xhookrightarrow{} \bar{K}$, let $a_{\tau}\in \bar{K}$ denote the (generalized) $\tau$-Hodge-Tate weight of $\psi$, i.e., the eigenvalue of the corresponding Sen operator--see \cite[p. 978]{sasha} for a discussion of such Hodge-Tate weights. It remains to show that $a_{\tau}\in \mb Z$ for each $\tau$. This follows since, for each $\tau$, the $\tau$-Hodge-Tate weights of $\rho \otimes \psi$ are all integers as the latter is de Rham, and zero appears as a $\tau$-Hodge-Tate weight of $\rho$ since it contains an invariant line.
\end{proof}

\begin{cor}
    Let $U$ be a smooth variety, and let $L\in\cohrig{U}{\Qlbar}$ have totally degenerate unipotent monodromy around a boundary divisor. Then $L$ admits a spreading out with bounded Frobenius traces.
\end{cor}

\subsection{A variant}
The same arguments as in the proof of Theorem~\ref{thm:main} gives the following, whose proof we omit, though we make some comments regarding it in Remark~\ref{rmk:motivic}. For simplicity, We have opted to state the version with trivial geometric determinant, i.e. $d=1$, but as usual there is a version with  determinant of order $d$.
\begin{prop}\label{prop:motivic}
Suppose $L$ is an irreducible $\Qlbar$-local system on $X$ with trivial determinant, and   of geometric origin of weight $w$, i.e. $L$ appears in cohomological degree $w$ of a smooth proper family. Then there exists a spreading out $(\X, S, \mc{L}, \iota)$. For any  spreading out such that $\mc{L}$ has weight $w$,  for all closed points $s: \Spec(k)\rightarrow S$, let $F_s$  denote the trace field of $\mc{L}|_{\X_s}$. Then 
\begin{enumerate}
\item There exists $N=N(X, L)$ such that, for all primes $m> N$ and different from the characteristic of $s$,  $F_s$ is unramified at $m$,
\item For $f=\nu !$ where $\nu$ is the number of Galois conjugates of $L$, whenever $\mb{F}_{p^f}\subset k$, $F_s$ is unramified at $p$.
\end{enumerate}
\end{prop}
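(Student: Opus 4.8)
The plan is to mimic the structure of the proof of Theorem~\ref{thm:main}, now tracking the additional input that $L$ is of geometric origin of weight $w$ and that the spreading out $\mc L$ is chosen to have pure weight $w$. The point is that ``geometric origin of weight $w$'' is the analogue, for a single $L$ rather than the whole finite set of rigid local systems, of the structural inputs used above, and the same companion-theoretic arguments apply. The key difference from Theorem~\ref{thm:main} is bookkeeping: the constants $N$ and the number $\nu$ of Galois conjugates now depend on $L$ itself (not just $X,n,d$), because geometric origin is not known to be a property enjoyed uniformly by a finite family. So I would first record that the finitely many Galois conjugates $^{\tau}L$ of $L$ are again irreducible of geometric origin of weight $w$ (companions of geometric objects are geometric, and the weight is preserved since $\tau$-companions preserve weights up to the usual normalization), and are all accounted for by a single spreading out $(\X,S,\mc L,\iota)$ with $\mc L$ of weight $w$; this gives the analogue of Lemma~\ref{lemma:biject}, with $\nu := $ number of conjugates playing the role of $|\mathrm{CohRig}|$.

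For part (1) I would argue exactly as in the proof of Theorem~\ref{thm:main}(2): pick $N=N(X,L)$ large enough that the coefficient fields of all the (finitely many) conjugates of $L$, viewed topologically, are unramified at every prime $m>N$, and also large enough to apply the residual absolute irreducibility input (the analogue of Lemma~\ref{lemma:irred}) to each conjugate. Then for a closed point $s$ with $\mathrm{char}(s)=p\neq m$, an $m$-companion of $\mc L|_{\X_s}$ restricted to $\X_{\bar s}$ is one of these conjugates, hence has unramified coefficients; the Frobenius-structure/descent argument of Proposition~\ref{prop:delignedescend} together with Claim~\ref{claim:lattice} shows the $m$-companion itself descends to an unramified-coefficient \'etale sheaf on $\X_s$ up to a character twist from the base, and twisting by a character of $\piET{\Spec \mb F_q}$ cannot introduce ramification at $m$ into the trace field. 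This yields that $F_s$ is unramified at $m$.

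For part (2), the weight-$w$ hypothesis is exactly what replaces the rigidity input of Esnault--Groechenig in Section~\ref{section:fcrystals}: since $\mc L$ has pure weight $w$ and is of geometric origin, its $p$-adic companions over $\X_s$ are $F$-isocrystals whose Newton/Hodge data and determinant are controlled, and crucially the determinant's $d$-th (here first) power is a Tate twist, so Lemma~\ref{lem:twist_p-adic} applies verbatim: after restricting to $k\supset \mb F_{p^f}$ with $f=\nu!$ (the size of the permutation group of the set of Galois conjugates, so that Frobenius pullback stabilizes the relevant finite set of crystals and one can run the ``same procedure produces all companions'' argument), the Frobenius traces of the $p$-adic companions land in an unramified extension of $\Qp$, hence $F_s$ is unramified at $p$. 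One small point to check is that the collection of geometric-origin weight-$w$ local systems of the given rank and determinant that are Galois-conjugate to $L$ is finite and stable under Frobenius pullback on the associated crystals; this is where $\nu!$ enters, exactly as $|\mathrm{CohRig}|!$ did in Theorem~\ref{thm:EG_p-adic}.

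The main obstacle is the $p$-adic half, part (2): for cohomologically rigid local systems one has the Esnault--Groechenig package (Theorem~\ref{thm:EG_p-adic}) producing, by construction, a finite set of $F^f$-crystals with controlled coefficients that Frobenius pullback permutes. For a merely geometric-origin local system one does not have ``rigidity'' to pin down a finite orbit abstractly; instead one must use the geometry --- the family $\pi\colon \mc Y\to U$ exhibiting $L$ --- together with weight purity to see that the mod-$p$ reduction of $\mc L$ underlies an $F$-isocrystal arising in the crystalline cohomology of the reduction of $\mc Y$, so that its $p$-adic companions are again ``of geometric origin'' hence of bounded type, and that Frobenius pullback acts on the (finite, by conjugate-counting) set of such. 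Making this precise --- essentially that a spreading out of a weight-$w$ geometric local system automatically yields crystalline realizations compatible with companions, so that the twist-analysis of Lemma~\ref{lem:twist_p-adic} can be run --- is the crux; the rest is a faithful transcription of Sections~\ref{section:unramified} and \ref{section:fcrystals} with $(X,n,d)$-dependence relaxed to $(X,L)$-dependence. This is why we only sketch it here.
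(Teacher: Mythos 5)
The paper omits a full proof of this proposition, but Remark~\ref{rmk:motivic} explains exactly where the new content lies, and your proposal misses it. In the rigid case (Theorem~\ref{thm:main}(3), via Corollary~\ref{cor:fcrystals}), the conclusion at $p$ requires $\mb F_{p^{fdn}}\subset k$: the factor $dn$ comes from Lemma~\ref{lem:twist_p-adic}, which extracts a $dn$-th root of a rank-one $F$-isocrystal in order to compare the Esnault--Groechenig/Esnault--de Jong $F^f$-crystal (whose Frobenius structure, and hence weight/slope, is \emph{arbitrary}) with the $p$-adic companion of $\mc L|_{\X_s}$. Proposition~\ref{prop:motivic}(2) claims the stronger statement with only $\mb F_{p^f}\subset k$; the improvement is that when $L$ is of geometric origin of weight $w$ and the spreading out $\mc L$ is chosen to have weight $w$, the two $F$-isocrystals being compared have the \emph{same} weight, so the twisting character is pure of weight zero and one does not need the extra root extraction that produced the factor $dn$.

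Your write-up says Lemma~\ref{lem:twist_p-adic} ``applies verbatim: after restricting to $k\supset \mb F_{p^f}$'' --- but applied verbatim, that lemma only gives the conclusion after restriction to $k_{fdn}$, not $k_f$, so as written this proves the weaker statement with exponent $fdn$ (here $fn$, since $d=1$), not the claimed $f$. Your final paragraph gestures at ``weight purity'' but identifies the main use of geometric origin as supplying a finite Frobenius orbit, rather than as the mechanism that kills the slope of the twisting character and eliminates the $dn$ factor; the latter is precisely what the paper highlights as the content of part (2) in Remark~\ref{rmk:motivic}. So the $p$-adic half of your argument has a genuine gap: you would need to replace the call to Lemma~\ref{lem:twist_p-adic} with the observation that, because the geometrically-constructed crystal and the companion crystal now agree in weight, the rank-one twist $\chi$ relating them already has Frobenius eigenvalue a $p$-adic unit (up to the Tate twist dictated by the determinant), hence unramified traces, with no further extension of the base field beyond $\mb F_{p^f}$. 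Part (1) of your proposal is essentially the correct transcription of Section~\ref{section:unramified}.
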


\begin{rmk}\label{rmk:motivic}
    Note that part (2) of Proposition~\ref{prop:motivic} is stronger than the analogous result for (cohomologically) rigid local systems, where we must also assume that $\mb{F}_{p^{nd}}\subset k$, and in the rigid case there is no assumption on the weight of the spreading out $\mc{L}$. Recall that the proof consists of comparing $F$-isocrystals obtained in two ways: one via companions and one via Higgs-de Rham flows (or the Esnault-de Jong construction \cite[Lecture 8]{elec}).  The $\mb{F}_{p^{fdn}}$ assumption comes in because we have no control over the weights of these isocrystals.
    
    In the case of Proposition~\ref{prop:motivic}, by the assumption of being of geometric origin,  the Galois conjugates of $L$ all underlie $\mb{C}$-VHS of the same weight. Hence choosing the spreading out $\mc{L}$ to have the same weight, we have that the two $F$-isocrystals do have the same weights. 
\end{rmk}
A positive answer to the following natural seeming question implies that we can strengthen Theorem~\ref{thm:main} part (3) in the same way; note that it is certainly implied by Simpson's motivicity conjecture.
\begin{question}
Can one prove that Galois conjugates of a cohomologically rigid local system underly $\mb{C}$-VHS of the same weight?    
\end{question}
\appendix

\section{Companions}\label{section:companions}
In this section, we briefly recall the theory of ($\ell$-adic) companions, as conjectured by Deligne and proven by Lafforgue, Drinfeld, and Deligne, and record several consequences that we will use in this article. (For a panoramic summary of the theory of companions, see \cite{kedlayacompanions}.) We also explain a strong local-global compatibility of $\ell$-adic companions at the cusps, which was observed in \cite[2.2]{delignefinitude} and is a simple corollary of local-global compatibility in the Langlands correspondence.

For the rest of this section, let $Y/\mb{F}_q$ be a smooth, geometrically connected variety over a finite field of characteristic $p$.
\subsection{$\ell$-adic companions}\label{subsec:l-adic_companions}
\begin{defn}
    Let $L$ be an lisse $\Qlbar$ Weil sheaf on $Y$. We say that $L$ is \emph{algebraic} if for all closed points $y$, the characteristic polynomial of the Frobenius conjugacy class of $y$ acting on $L$,  $P_y(L,t)\in \Qlbar[t]$, has coefficients in $\bar{\Q}\subset \Qlbar$. We say that $L$ has \emph{fractional cyclotomic determinant} if there exists integers $k, m$, with $k\geq 1$, such that $\det(L)^{\otimes k}\cong \Qlbar(m)$.
\end{defn}
\begin{defn}
    Let $L$ be an algebraic lisse $\Qlbar$ Weil sheaf on $Y$. Let $\ell'\neq p$ be a prime number (possibly equal to $\ell$). Let $\tau\colon \Qlbar\rightarrow \Qlpbar$ be a (possibly non-continuous) field isomorphism; abusing notation, let $\tau$ also denote the induced isomorphism $\Qlbar[t]\rightarrow \Qlpbar[t]$. A \emph{$\tau$-companion to $L$} is a lisse $\Qlpbar$-Weil sheaf $L'$ on $Y$, such that for all closed points $y$, we have:
    $$\tau(P_y(L,t))=P_{y}(L',t)\in \Qlpbar[t].$$
\end{defn}

\begin{rmk} We make two important remarks on $\tau$-companions.
\begin{enumerate}
    
    \item As $\tau$ is generally \emph{not continuous}, we cannot simply construct a sheaf by applying $\tau$ to the corresponding matrices: the resulting representation will no longer be continuous and hence there will be no corresponding lisse Weil sheaf.
    \item Note that the behavior of the $\tau$-companion is only specified on Frobenius conjugacy classes; in particular, the definition does not directly imply any constraint for the characteristic polynomial of other element of $\piET{Y}$.
\end{enumerate}
\end{rmk}
The mathematical content of the following theorem is due to Lafforgue (for curves) and Deligne and Drinfeld (for higher dimensional varieties).
\begin{thm}\label{thm:companions}
    Let $L$ be an irreducible lisse $\Qlbar$-Weil sheaf on $Y$.
    \begin{enumerate}
        
    \item Suppose $L$ has algebraic determinant (as a rank 1 Weil sheaf). Then $L$ is algebraic and for all primes $\ell'\neq p$ and for all field isomorphisms $\tau\colon \Qlbar\rightarrow \Qlpbar$, $\tau$-companions to $L$ exist and are unique up to isomorphism.
    \item 
    Suppose $L$ has fractional cyclotomic determinant. Then $L$ extends to an \'etale sheaf on $Y$ and so do all $\tau$-companions.
    \end{enumerate}
\end{thm}
\begin{proof}
    We provide references to \cite{kedlayacompanions} for these results. The first follows from Corollary 3.10 and Theorem 4.28 of \emph{loc. cit.} The second follows from the fact that companions is preserved under tensorial operations, together with the fact that there exists a $p$-plain character $\chi$ such that $L\otimes \chi$ has finite order determinant, and is hence $p$-plain by Theorem 1.3. (For the definition of $p$-plain, see Conjecture 1.1(iii) of \emph{loc. cit.}.)
\end{proof}
\begin{notation}
    Suppose that $L$ is a semi-simple lisse $\Qlbar$ Weil sheaf on $Y$, all of whose irreducible summands have algebraic determinants. Let $\ell'\neq p$ be a prime number and let $\tau\colon \Qlbar\rightarrow \Qlpbar$ be an isomorphism. Then we denote by $^{\tau}L$ the $\tau$-companion of $L$
\end{notation}
\subsection{$p$-adic companions}
Let $\fisocd{Y}_{\Qpbar}$ denote the category of overconvergent $F$-isocrystals on $Y$ with coefficients in $\Qpbar$. As $F$-isocrystals over finite fields have a notion of ``characteristic polynomial of Frobenius'', we may simply copy the above definition of $\tau$-companions. Then, given an irreducible $\mc E\in \fisocd{Y}_{\Qpbar}$ with fractional cyclotomic determinant and any $\tau\colon \Qpbar\rightarrow \Qlbar$, the $\tau$-companion to $\mc E$ exists. For more details, see \cite{kedlayacompanions}.
\subsection{Frobenius trace field}
In this subsection, we compare the field generated by the \emph{traces} and the field generated by the \emph{coefficients} of the characteristic polynomials of Frobenius conjugacy classes at closed points for those Weil sheaves $L$ that have fractional cyclotomic determinant. To do this, we require the following well-known fact.
\begin{fact}\label{fact:brauer_nesbitt}
    Let $Y/\mb F_q$ be a smooth variety and let $L, L'$ be semi-simple \'etale $\Qlbar$-local systems on $Y$. Suppose for all closed points $y$ of $Y$, we have:

    $$\textbf{trace}(\text{Fr}_y|L)=\textbf{trace}(\text{Fr}_y|L').$$
    Then $L\cong L'$.
\end{fact}
\begin{proof}
    Let $t\colon \piET{Y}\rightarrow \Qlbar$ be the continuous function that sends $\gamma\in \piET{Y}$ to $\textbf{trace}(\gamma,L)\in \Qlbar$, and analogously for $t'$. Then $t=t'$ by the Cebotarev density theorem applied to $\piET{Y}$. It follows from the Brauer-Nesbitt theorem that $L\cong L'$ \cite[\S 20 6, VIII.376, Corollaire (a)]{bourbakialg8}.
\end{proof}

\begin{lem}
  Let $L$ be an irreducible lisse $\Qlbar$-sheaf on $Y$ of rank $n\geq 1$ with fractional cyclotomic determinant. Let $F_{\text{coeff}}\subset \Qlbar$ be the subfield generated by the coefficients of $P_y(L,t)\in \Qlbar[t]$, as $y$ ranges over the closed points of $Y$. Let $F_{\text{trace}}\subset F_{\text{coeff}}$ denote the field generated by the traces of Frobenius conjugacy classes of all closed points $y$ of $Y$ (a.k.a. the field generated by the coefficient of $t^{n-1}$ in $P_y(L,t)$). Then  $F_{\text{trace}}= F_{\text{coeff}}$.
\end{lem}
\begin{proof}
    First of all, $L$ and all of its $\ell$-adic companions extend to \'etale sheaves by Theorem~\ref{thm:companions}. Moreover, it follows from work of Deligne (building on a theorem of Lafforgue) that that $F_c\subset \Qlbar$ is a number field \cite[Theorem 1.3(i)]{kedlayacompanions}. Suppose  $F_{\text{trace}}\subsetneq F_{\text{coeff}}$. Then there exists an automorphism $\tau\colon \Qlbar\rightarrow \Qlbar$ that fixes $F_{\text{trace}}$ but acts non-trivially on $F_{\text{coeff}}$. Let $L'$ be the $\tau$-companion to $L$. Then for all closed points $y$, the trace of the Frobenius conjugacy class of $y$ on $L$ and $L'$ are the same. Fact \ref{fact:brauer_nesbitt} implies that $L\cong L'$, which means that the characteristic polynomials of all elements of $\piET{Y}$ are the same as elements of $\Qlbar[t]$, contradicting our assumption that $\tau$ was non-trivial.
\end{proof}

The above Lemma motivates the following definition.
\begin{defn}
    Let $L$ be an irreducible lisse $\Qlbar$-sheaf on $Y$ with algebraic determinant. Then the \emph{field generated by Frobenius traces}, or simply the \emph{Frobenius trace field}, is the subfield $F\subset \Qlbar$ generated by the traces of the Frobenius conjugacy classes of all closed points $y$ of $Y$.
\end{defn}
\subsection{Stable trace field}
\begin{defn}\label{def:stable_Trace}
    Let $Y/\mb F_q$ be a smooth, geometrically connected variety and let $L$ be a $\Qlbar$-local system on $Y$. The \emph{stable Frobenius trace field} of $L$ is the subfield of $\Qlbar$ given by the intersection of the Frobenius trace fields of $$L|_{Y\times_{\mb F_q}\mb F_{q'}}$$ as $q'$ ranges over the positive integral powers of $q$.
\end{defn}
The below follows directly from the definition and Fact \ref{fact:brauer_nesbitt}, and perhaps provides some intuition for the meaning of the stable Frobenius trace field.
\begin{prop}\label{prop:stable_trace_field_twist}
Let $Y/\mb F_q$ be a smooth, geometrically connected variety and let $L$ be a semi-simple $\Qlbar$-local system whose irreducible constituents have finite order determinant. Then the following are equivalent.
\begin{enumerate}
    \item The Frobenius trace field of $L$ is strictly bigger than the stable Frobenius trace field of $L$.
    \item There exists a (possibly non-continuous) $\tau\in \text{Aut}(\Qlbar)$ such that $^{\tau}L$ (the $\tau$-companion of $L$) and $L$ are not isomorphic but are isomorphic when restricted to the geometric fundamental group $\piET{\bar{Y}}$.

\end{enumerate}

\end{prop}

The following lemma shows that the stable trace field cannot be too much smaller than the trace field once we rigidify at a single point (with a sufficiently strong condition on the coefficients at that point).
\begin{lem}\label{lem:rigidified}
    Let $Y/\mb F_q$ be a smooth, geometrically connected variety and let $L$ be an irreducible $\Qlbar$-local system on $Y$ with fractional cyclotomic determinant. Let $E_1\subset \Qlbar$ be the stable trace field of $L$. Let $y$ be a $\mb{F}_q$-point of $Y$ and let $E_2\subset \Qlbar$ be a number field such that
    \begin{itemize}
        \item $P_y(L,t)\in E_2[t]\subset \Qlbar[t]$; and
        \item the roots of $P_y(L,t)$ are all congruent to $1\mod{\ell}$, and if $\ell=2$, the roots of $P_y(L,t)$ are all congruent to $1\mod{\ell^2}$.
    \end{itemize}Then the trace field of $L$ is contained in $E:=E_1\cdot E_2\subset \Qlbar$. 
\end{lem}
\begin{proof}
    Suppose the trace field of $L$ is not contained in $E$. Then there exists an automorphism $\tau\in \text{Aut}(\Qlbar)$ that fixes $E$ and a finite order character $\chi$ on $\mb F_q$ such that ${}^{\tau}L\cong L(\chi)$. (That $\chi$ is of finite order follows from the fact that companions commutes with tensorial operations and fixes integral powers of the cyclotomic character.) 

    Write the roots of $P_y(L,t)$ as $\{\lambda_1,\ldots, \lambda_n\}$. Then $\tau$ fixes this multiset, as $\tau$ fixes the coefficients of $P_y(L,t)$. On the other hand, $\tau(\{\lambda_1,\ldots, \lambda_n\})=\{\chi(y)\lambda_1,\ldots, \chi(y)\lambda_n\}$.  Therefore $1\equiv \chi(y)\lambda_1\equiv \chi(y) \mod \ell$. On the other hand,  $\chi(y)$ is a non-trivial root of unity; if $\ell$ is odd, then $\chi(y)$ is not congruent to $1\mod{\ell}$, a contradiction. (If $\ell=2$, one may repeat the above analysis, using the fact that there are no non-trivial roots of unity that are $1\mod{4}$.)
\end{proof}

\subsection{Tame Lefschetz theorem for trace fields}

\begin{lem}\label{lem:tame_lefschetz_trace}
    Let $Y/\mb F_q$ be a smooth, geometrically connected variety with simple normal crossings compactification $\bar{Y}$ and boundary divisor $D=\cup D_i$. Let $L$ be an irreducible $\Qlbar$-sheaf on $Y$ with fractional cyclotomic determinant and \emph{tame} local monodromies around the $D_i$. Let $\bar{C}\subset \bar{Y}$ be a smooth projective curve with the following properties:
    \begin{itemize}
        \item $\bar{C}$ is the smooth complete intersection of smooth ample divisors $Z_j$ in $\bar{Y}$ such that
        \item each $Z_j$ intersects the boundary divisor $D$ in good position.
    \end{itemize}
    Set $C:=Y\cap \bar{C}$. Set $F_C\subset F_Y\subset \Qlbar$ to be the Frobenius trace fields of $L|_C$, resp. $L$, as subfields of $\Qlbar$. Then $F_C=F_Y$.
\end{lem}
\begin{proof}
    First of all, the tame Lefschetz theorem implies that the map $\piTAME{C}\rightarrow\piTAME{Y}$ is surjective \cite{esnaultkindler}. If $F_C\neq F_Y$, then pick an automorphism $\tau\in \text{Aut}(\Qlbar)$ that fixes $F_C$ elementwise but does not fix $F_Y$ elementwise. Let $L'$ to be the $\tau$-companion to $L$, which is \'etale by our assumption that $L$ had fractional cyclotomic determinant. Then $L'|_C\cong L|_C$ by Fact \ref{fact:brauer_nesbitt}, which implies that $L'\cong L$, contradicting our assumption on $\tau$.
\end{proof}

\subsection{Local $\ell$-adic companions}
In this subsection, we set up a  definition of companions in the local function field setting \cite[\S 8]{deligneconstants}. We will use this explain compatibility with the local Langlands correspondence, and eventually to explain the information that the companions relations preserves at cusps. These last two points are contained in \cite[\S 2.2]{delignefinitude}. 

We first recall the definition of ``extension of scalars'' of a representation, where we explicitly include the field map $\tau$.
\begin{defn}
    Let $G$ be a group, let $\tau\colon E\rightarrow E'$ be a homomorphism between fields of characteristic 0, and let $V/E$ be a vector space. Let $\pi\colon G\rightarrow \text{GL}(V)$ be a representation. Then the representation $\tau(\pi)$ is given by the induced action of $G$ on $V\otimes_{E,\tau}E'$, i.e., by $\pi\otimes 1$. 
\end{defn}

\begin{rmk}
    We make some comments on the above definition.
    \begin{enumerate}
    
        \item Picking a basis on $V$, one may describe the representation $\tau(\rho)$ by simply applying $\tau$ to the resulting matrices.
        \item Suppose $G$ is locally compact, Hausdorff, and totally disconnected, and furthermore that $\pi$ is smooth (resp. admissible). Then $\tau(\pi)$ is also smooth (resp. admissible).
        \item Note that the notation has been chosen to distinguish $\tau$-companions and simply extending scalars by $\tau$. 
    \end{enumerate}
\end{rmk}

\begin{defn}(\cite[8.4.1]{deligneconstants})
    Let $K=\mb F_q((T))$ and let $E$ be a field of characteristic 0. A \emph{Weil-Deligne representation over $K$ with coefficients in $E$} is a triple $(\rho,V, N)$, where
    \begin{itemize}
    \item $V$ is a finite dimensional $E$-vector space;
    \item $\omega\colon W(K)\rightarrow \text{GL}(V)$ is a smooth representation of the Weil group of $\mb F_q((T))$ (i.e., an open subgroup of inertia acts trivially); and
    \item $N\colon V\rightarrow V(-1)$ is a nilpotent, $\omega$-equivariant operator.
    \end{itemize}
\end{defn}

Note that the definition of a Weil-Deligne representation is \emph{purely algebraic}, and in particular does not reflect any topology of $E$. We denote the category of Weil-Deligne representations over $K$ with coefficients in $E$ as: $\weildeligne{K}{E}$. The following proposition relates Weil-Deligne representations to \emph{continuous} representations of the Weil group; the proof amounts to a variation on Grothendieck's proof of the quasi-unipotent monodromy theorem. 
\begin{prop}
    Let $K\cong \mb F_q((T))$, let $\ell\neq p$ be prime and let $E$ be an algebraic extension of $\Ql$. Then there is an equivalence of categories:

    $$(-)_{\text{WD}}\colon \repcts{W(K)}{E}\rightarrow \weildeligne{K}{E}.$$
\end{prop}
\begin{fact}\label{fact:formula_WD}
    Given a Weil-Deligne representation $(\omega,V,N)$, the associated $E$ Weil-representation $(\rho, V)$ is given by the following formula. Let $\Phi$ be a Frobenius element in $W(K)$ and $x\in I(K)\cong \Gal(\Fqbar((T)))$; then $\rho(\Phi^ax):=\omega(\Phi^ax)\exp(-t_{\ell}(x)N)$,
    where $t_{\ell}\colon I(K)\rightarrow \Zl$ is the cyclotomic character. (Note that this formula indeed works for any algebraic extension of $\Ql$ because we are taking the exponential of a nilpotent operator; hence the associated sum is finite.)
\end{fact}

\begin{defn}
    Let $E, E'$ be fields of characteristic 0, equipped with a map $\tau\colon E\rightarrow E'$. Let $(\omega, V, N)$ be a WD representation of $K$ with coefficients in $E$. Then ${\tau}(\omega, V, N):=(\omega\otimes 1, V\otimes_{\Qlbar,\tau} \Qlpbar, N\otimes 1)$.
\end{defn}
Finally, we come to the definition of local companions.
\begin{defn}\label{def:local_companions}
    Let $\ell, \ell'\neq p$ be prime numbers (possibly the same). Let $\tau\colon \Qlbar\rightarrow \Qlpbar$ be a field isomorphism. Let $\rho\colon W(K)\rightarrow \text{GL}_n(\Qlbar)$ be a continuous representation. Then a continuous representation $\rho'\colon W(K)\rightarrow \text{GL}_n(\Qlpbar)$ is a \emph{$\tau$-companion} of $\rho$ if $${\tau}((\rho)_{\text{WD}})\cong (\rho')_{\text{WD}}$$ as WD representations with coefficients in $\Qlpbar$.
\end{defn}
    Assume that $\rho\colon W(K)\rightarrow \text{GL}_n(\Qlbar)$ is a continuous representation such that $(\rho)_{\text{WD}}$ is semi-simple. (This condition is also called Frobenius semi-simple in the literature.) Then the $\tau$-companion of $\rho$ is unique up to isomorphism, and we sometimes refer to it as $^{\tau}\rho$.

We now explain the compatibility of the local Langlands correspondence and the operation of taking $\tau$-companions.
\begin{notation}
    Let $\ell$ be a prime number and let $\rho$ be a continuous, Frobenius-semisimple representation of $W(K)$ with coefficients in $\Qlbar$. Then we denote by $\LLC{\rho}$ the associated irreducible, smooth representation of $\text{GL}_n(K)$ under the local Langlands correspondence.
\end{notation}
\begin{lem}\label{lem:loc_companions_llc}
    Let $\ell, \ell'\neq p$ be prime numbers (possibly the same). Let $\rho$ be a smooth, Frobenius-semisimple representation of $W(K)$ with coefficients in $\Qlbar$. Let $\tau\colon \Qlbar\rightarrow \Qlpbar$ be a field isomorphism. Then:
    $$\tau(\LLC{\rho})\cong \LLC{^{\tau}\rho}$$
    as irreducible, smooth representations of $\text{GL}_n(K)$ with coefficients in $\Qlpbar$.
\end{lem}

\begin{proof}
    This is stated in \cite[\S 2.2]{delignefinitude}. Alternatively, this follows quite directly from \cite[Theorem A(1)]{li-huerta_llc}. Li-Huerta's refers to smooth representations, and we may apply it to $\omega$, the underlying (smooth) representation in $(\rho)_{WD}$. Li-Huerta constructs, for every $\gamma\in W(K)$ with positive valuation\footnote{with the convention that geometric $q$-Frobenius has norm 1} and for every $h\in C^{\infty}_c(\text{GL}_n(\mc O), \Qlbar)$, a function $f_{\gamma, h}\in C^{\infty}_c(\text{GL}_n(K))$, with the following property: a \emph{smooth} Frobenius-semisimple Weil representation $\omega$  of rank $n$ with coefficients in $\Qlbar$ and an irreducible smooth representation $\pi$ of $\text{GL}_n(K)$ with coefficients in $\Qlbar$ correspond under the local Langlands correspondence if and only if:

    $$\textbf{trace}(f_{\gamma, h}|\pi)=\textbf{trace}(\gamma|\omega)\textbf{trace}(h|\pi).$$
    Then,  $\textbf{trace}(\gamma|\tau(\omega))=\tau(\textbf{trace}(\gamma|\rho))\in \Qlpbar$. 
    \end{proof}
    
\subsection{Compatibility at the cusps}

In this subsection, we explain how the companions relation interacts with ``local monodromy at the cusps'' \cite[\S 2.2]{delignefinitude}. We then show that, when the local monodromy at a cusp has a particular form (e.g. is totally degenerate), then after twisting, the stable trace field is ``big'', i.e., is the trace field.
 The following theorem is essentially an observation of Deligne. Its novelty lies in the fact that, in the definition of companions, only the behavior at closed points is determined; however, the behavior at the cusps will be similarly forced, by local-global compatibilty in the Langlands correspondence.

Let $C/\mb F_q$ be a smooth, geometrically connected curve with smooth compactification $\bar{C}$. If $L$ is a local system on $C$ and $\infty \in \bar{C}\setminus C$ is a closed point with residu field $\mb F_{q'}$ and local uniformizer $z_{\infty}$, then we denote by $L_{\infty}$ the restriction of $L$ to $\Spec(\mb F_{q'}((z_{\infty})))$.
\begin{thm}\label{thm:cusps_compatible}
    Let $L$ be an irreducible $\Qlbar$-local system on $C$ with fractional cyclotomic determinant, let $\infty\in \bar{C}\setminus C$ be a closed point, with local parameter $z_{\infty}$ and residue field $\mb F_{q'}$. Let $E\subset \Qlbar$ be the Frobenius trace field of $L$. Then the following hold.
\begin{enumerate}
    \item\label{part:phi_alg} If $\Phi\in \Gal(\mb F_{q'}((z_{\infty})))$ is a lift of the $q'$-Frobenius,  the  characteristic polynomial of $\Phi$ on $L_{\infty}$ has coefficients in $E\subset \Qlbar$; in particular, the eigenvalues of $\Phi$ are algebraic numbers.
    \item\label{part:cusps_compatible} Let $\tau\in \text{Aut}(\Qlbar)$. Then, after potentially Frobenius-semisimplifying, the local representations $L_{\infty}$ and $(^{\tau}L)_{\infty}$ are $\tau$-companions in the sense of Definition~\ref{def:local_companions}.
\end{enumerate}    

\end{thm}
\begin{proof}
This is also essentially contained in \cite[\S 2.2]{delignefinitude}; we simply reprise the proof. We first prove \ref{part:cusps_compatible}. By twisting by a fractional cyclotomic character, we may assume that $L$ has finite order determinant. Set $\Pi$ to be the global irreducible cuspidal $\Qlbar$ automorphic representation, attached to $L$. Then $\tau(\Pi)$ is a global irreducible cuspidal $\Qlpbar$ automorphic representation.\footnote{Inasmuch as an automorphic representation is a sub-vector space of a space of functions from a discrete space to $\Qlbar$, one can simply post-compose the functions with $\tau$ to obtain functions with values in $\Qlpbar$.} Let $\Pi_{\infty}$ denote the component of $\Pi$ at $\infty$ (and analogously for $(\tau(\Pi))_{\infty}$).
Then, again by the Langlands correspondence, there is an associated $\Qlpbar$-local system $^{\tau}L$.\footnote{local-global compatibility for closed points $c$ of $C$ implies that $\tau(P_c(L,t))=P_c(^{\tau}L,t)\in \Qlpbar[t]$.} Local-global compatibility at $\infty$ \cite[Corollaire VII.5]{lafforgue} implies that $\LLC{L_{\infty}}\cong \Pi_{\infty}$, where the latter denotes the component at $\infty$ of $\Pi$ and also that $\LLC{(^{\tau}L)_{\infty}}\cong \tau(\Pi)_{\infty}\cong \tau(\Pi_{\infty})$. Then Lemma~\ref{lem:loc_companions_llc} implies that $(^{\tau}L)_{\infty}$ and $ ^{\tau}(L_{\infty})$ are isomorphic after Frobenius-semisimplifying, as desired. 

We now prove \ref{part:phi_alg}. It suffices to show that for any $\tau\in \text{Aut}(\Qlbar)$ that fixes $E$, we have: $^{\tau}(L_{\infty})\cong L_{\infty}$. This follows  from \ref{part:cusps_compatible} and the fact that $^{\tau}L\cong L$.
\end{proof}
\begin{rmk}
    Although there are uncountably many choices of Frobenius lifts, it turns out that the eigenvalues of a Frobenius lift acting on $L_{\infty}$ is independent of the choice of lift. This essentially follows from Grothendieck's quasi-unipotent monodromy theorem and is implicit in the construction of the associated Weil-Deligne representation. The key fact about matrices is the following: if $K$ is a field of characteristic $0$, $M,U\in \text{GL}_n(K)$, with $U$ unipotent, such that $MUM^{-1}=U^q$, then the spectrum of $M$ and the spectrum of $UM$ are the same. We sketch a proof of this fact. We may assume that $U\neq 1$. First of all, $M=U^qMU^{-1}=U (U^{q-1}M)U^{-1}$. Plugging in the formula for $M$ into the right hand side, one deduces further that $M=U^{q}U^{q}MU^{-1}U^{-1}=U^{2q}MU^{-2}=U^{2}(U^{2q-2}M)U^{-2}$. It therefore follows by induction that for each $r\geq 1$, $M$ is conjugate to $U^{r(q-1)}M$.

    Consider the set $\{U^rM\}_{r\in \mathbb Z}\subset \text{GL}_n(K)$; set $S$ to be its Zariski closure. Then $S$ is one-dimensional (because the Zariski closure of $\{U^r\}$ is one-dimensional). There are infinitely many points of $S$ with the same spectrum as $M$. As the set of points with given spectrum inside of $\text{GL}_n(K)$ is a Zariski closed subset, it follows that all of $S$ has the same spectrum, namely, that of $M$.
\end{rmk}

We use this compatibility to show that in certain cases, the trace field is identically the stable trace field.
\begin{defn}\label{def:jordan_block_mult_1}
    Let $E$ be an algebraically closed field of characteristic 0 and let $A\in \text{Mat}_{n\times n}(E)$ be a matrix. We say that $A$ has a \emph{Jordan block of multiplicity 1 and generalized eigenvalue $\lambda$} if, in the Jordan decomposition of $A$,
    $$A=\bigoplus_{\mu\in E, s\in \Z_{\geq 0}}J(\lambda, r)^{m(\mu, s)},$$
    there $r\geq 1$ such that $m(\lambda,r)=1$. In this Jordan block, there is a unique invariant line, which we call the \emph{$\lambda$-eigenline.}
\end{defn}
\begin{cor}\label{cor:mult_one_trace_field}
    Let $L$ be an irreducible $\Qlbar$-local system on $C$ with fractional cyclotomic determinant, let $\infty\in \bar{C}\setminus C$ be a closed, $\mb F_q$-rational point, with local parameter $z_{\infty}$. Suppose that $L_{\infty}$ is tamely ramified, and let $\eta$ denote a generator of tame inertia of $\Gal(\mb F_{q}((z_{\infty})))$.  Suppose that the action of $\eta$ on $L_{\infty}$, which is automatically quasi-unipotent, there exists $r$ and $\lambda=\pm1$ such that $J(\lambda, r)$ appears with multiplicity one, i.e.  $m(\lambda, r)=1$.
    
     If a local Frobenius lift $\Phi\in \Gal(\mb F_{q}((z_{\infty})))$ acts trivially on the  $\lambda$-eigenline of $\eta$, then the stable Frobenius trace field of $L$ is equal to the Frobenius trace field of $L$.
      
\end{cor}
\begin{proof}
 Let $\chi\colon \Gal(\mb F_q)\rightarrow \Qlbar^{\times}$ be a character. Then on $L(\chi)_{\infty}$, the element $\Phi$ acts on the $\lambda$-eigenline by $ \chi(\Phi)$. Suppose there exists a character $\chi$ and a field automorphism $\tau\in \text{Aut}(\Qlbar)$ such that  $L(\chi)$ is a $\tau$-companion of $L$; it suffices to prove that $\chi$ is trivial, so let us assume, for the sake of contradiction, that $\chi(\Phi)\neq 1$.  By Theorem~\ref{thm:cusps_compatible}, we have $^{\tau}L_{\infty}\cong\  (L_{\infty}(\chi))$. Note further that $^{\tau}L_{\infty}$ is tame, as tameness is preserved by the companions relation.

Note that, by the relation  $\Phi \eta \Phi^{-1} = \eta^q$,  the unique copy of $J(\lambda,r)$ inside of $L_{\infty}$ is in fact a $\text{Gal}(\mb F_q((z_{\infty})))$-subrepresentation; we call this sub representation $L_{\infty}(\lambda,r)$. On the one hand, by \ref{lem:loc_companions_llc} it follows that on $^{\tau}L_{\infty}$, a tame representation of $\Gal(\mb F_q((z_{\infty})))$, our fixed generator $\eta$ of tame inertia has a Jordan black of multiplicity 1 and generalized eigenvalue $\tau(\lambda)=\lambda$; moreover, $\Phi$ acts trivially on  the $\lambda$-eigenline in the Jordan block $J(\lambda, r)$ of $^{\tau}L_{\infty}$. In the above language,  the $\Gal(\mb F_q((z_{\infty})))$-representation $^{\tau}L_{\infty}(\lambda, r)$ has a unique rank 1 subrepresentation, and moreover on this subrepresentation $\Phi$ acts trivially.

On the other hand, in $L_{\infty}(\lambda, r)(\chi)$, there is a unique rank one $\Gal(\mb F_q((z_{\infty})))$-subrepresentation; on this line,  $\Phi$  acts by $\chi(\Phi)\neq 1$. This contradicts the fact that $^{\tau}L_{\infty}(\tau(\lambda), r)\cong L_{\infty}(\lambda, r)(\chi)$, as desired.

\end{proof}

\printbibliography[]

\end{document}